\DeclareMathAlphabet{\mathantt}{OT1}{antt}{li}{it}
\DeclareMathAlphabet{\mathpzc}{OT1}{pzc}{m}{it}
\theoremstyle{plain}
\newtheorem{theorem}{Theorem}
\newtheorem{lemma}{Lemma}
\newtheorem{proposition}{Proposition}
\theoremstyle{definition}
\newtheorem*{remark}{Remark}
\DeclareMathOperator*{\argmin}{argmin}
\DeclareMathOperator{\prox}{prox}
\DeclareMathOperator{\sgn}{sgn}
\newcommand{\dup}{\overline{D}_{p}} 
\newcommand{\dlo}{\underline{D}_{p}} 
\newcommand{\xad}{{x_{\alpha}^\delta}}
\newcommand{\xadii}{{x_{\alpha,\delta}^{II}}}
\newcommand{\xadn}{{\mathpzc{x}_\alpha^\delta}}
\newcommand{\xan}{{\mathpzc{x}_\alpha}}
\newcommand{\etan}{\eta_n}
\newcommand{\xadiin}{{\mathpzc{x}^{II}_{\alpha,\delta}}}
\newcommand{\pad}{{p_{\alpha}^\delta}}
\newcommand{\padii}{{p_{\alpha,\delta}^{II}}}
\newcommand{\pa}{{p_{\alpha}}}
\newcommand{\yd}{{y^\delta}}
\newcommand{\ydn}{\mathpzc{y}^\delta}
\newcommand{\yn}{\mathpzc{y}}
\newcommand{\hq}[1]{h_{#1}}
\newcommand{\hqi}[1]{h_{#1}^{-1}}
\newcommand{\Dp}{{\Delta p_\alpha}}
\newcommand{\Dy}{{\Delta y}}
\newcommand{\Dpn}{{\Delta\mathpzc{p}_\alpha}}
\newcommand{\Dyn}{{\Delta\mathpzc{y}}}
\newcommand{\padn}{{\mathpzc{p}^\delta_\alpha}}
\newcommand{\padiin}{{\mathpzc{p}^{II}_{\alpha,\delta}}}
\newcommand{\pan}{{\mathpzc{p}_{\alpha}}}
\newcommand{\paiin}{{\mathpzc{p}^{II}_\alpha}}
\newcommand{\Dpii}{\Delta p^{II}_\alpha}
\newcommand{\Dpiiast}{\Delta p^{II}_{\alpha_\ast}}
\newcommand{\Dpiin}{\Delta\mathpzc{p}^{II}_\alpha}
\newcommand{\ns}{{n^*}}
\newcommand{\realp}[2]{{#1 #2}}
\let\oldref\ref
\renewcommand{\ref}[1]{(\oldref{#1})}
\providecommand{\keywords}[1]{\textbf{Keywords:} #1}
\newcounter{example}[section]
\author{Stefan Kindermann\footnote{Johannes Kepler University Linz, Industrial Mathematics Institute, Altenbergerstra{\ss}e 69,
A-4040 Linz, Austria (kindermann@indmath.uni-linz.ac.at)} , Kemal Raik\footnote{Johannes Kepler University Linz, Industrial Mathematics Institute, Altenbergerstra{\ss}e 69,
A-4040 Linz, Austria (kemal.raik@indmath.uni-linz.ac.at) }}	
\title{Convergence of Heuristic Parameter Choice Rules for Convex Tikhonov Regularisation}
\begin{document}
	\maketitle
	\begin{abstract}
	We investigate the convergence theory of several known as well as new heuristic parameter choice rules for convex Tikhonov regularisation. The success of such methods is dependent on whether certain restrictions on the noise are satisfied. In the linear theory, such conditions are well understood and hold for typically irregular noise. In this paper, we extend the convergence analysis of heuristic rules using noise restrictions to the convex setting and prove convergence of the aforementioned methods therewith. The convergence theory is exemplified for the case of an ill-posed problem with a diagonal forward operator in $\ell^q$ spaces. Numerical examples also provide further insight.
	\end{abstract}
	
	\keywords{ill-posed problems, convex regularisation, heuristic parameter choice rules}
	
\section{Introduction}	
	
Let $X$ and $Y$ be Banach and Hilbert spaces, respectively. We consider the ill-posed problem
\begin{equation*}
Ax=y, 
\end{equation*}
where $A:X\to Y$ is a continuous linear operator and only noisy data $y^\delta=y+e$ is available and $\delta$ such that $\|e\|\le\delta$ is defined to be the noise level. In other words, we assume that the data does not depend continuously on the solution. We therefore determine a regularised solution \`{a} la Tikhonov:
\begin{equation*}
x^\delta_\alpha\in\argmin_{x\in X}\mathcal{T}^\delta_\alpha(x),
\end{equation*}
where
\begin{equation}\label{tikfun}
\begin{aligned}
\mathcal{T}^\delta_\alpha(x):=\frac{1}{2}\|Ax-y^\delta\|^2+\alpha\mathcal{R}(x),
\end{aligned}
\end{equation}
is the \emph{Tikhonov functional}, and the regularisation term given by the functional $\mathcal{R}:X\to \mathbb{R}\cup\{\infty\}$ is assumed to be convex, proper, coercive, weak-$\ast$ lower semicontinuous and $\alpha\in(0,\alpha_\text{max})$ is the so-called \emph{regularisation parameter}. The aforementioned properties ensure the existence of a minimiser for $\mathcal{T}^\delta_\alpha$ (cf.~\cite{variationalimaging}). In this way, we seek to approximate an \emph{$\mathcal{R}$-minimising solution}, $x^\dagger\in X$ (cf.~\cite{variationalimaging}).

The choice of regularisation parameter is pivotal for any reasonable approximation of $x^\dagger$. There are several classes of rules which select this parameter
(cf., e.g., \cite{hamarikpalmraus}), the  best-known being the a-posteriori rules which select $\alpha_\ast:=\alpha(\delta,y^\delta)$ in dependence of the noise level and the measured noisy data. A classic example is Morozov's discrepancy principle (cf.~\cite{discrepancy}). The drawback is that in practical situations, the noise level is usually unknown. In this case, one may opt to use another class of parameter choice rules, namely the so-called \emph{heuristic} rules, which select $\alpha_\ast:=\alpha(y^\delta)$ in dependence of the measured data alone (i.e., without knowledge of the noise level). Their pitfall, however, comes in the form of the Bakushinskii veto (cf.~\cite{veto}), which essentially asserts that a heuristic rule cannot yield a convergent regularisation method in the worst case scenario. However, it was proven recently  in 
\cite{kinderquasi,kinderabstract} 
for linear regularisation methods
that certain heuristic rules yield a 
convergent  method if some \emph{noise conditions} are postulated.
It is important to note that the aforementioned noise conditions utilised the spectral theory for self-adjoint linear operators. A recent discussion and extension of the noise conditions within the linear theory may be found in \cite{raikweaklybounded}. 

The topic of this paper is the corresponding analysis for the convex case. Note that the tools from the linear theory are no longer applicable due to the absence of spectral theory. For the mentioned setting, some heuristic parameter choice rules were considered in the literature: B. Jin and Lorenz in \cite{jinlorenz} discussed the heuristic discrepancy rule and a version of the discrete quasi-optimality rule. The heuristic discrepancy rule was also considered for the augmented Lagrangian method and Bregman iteration for nonlinear operators in the work of Q. Jin (c.f.~\cite{QJinHankeRausBanach,QJinLagrangian,QJinBregIt}). A numerical study of certain heuristic rules was  investigated in \cite{kindermannmutimbu}.

Crucial to the convergence theory of heuristic rules are restrictions on the noise. In the linear theory, they take the form of Muckenhoupt-type conditions. In the convex case, some rather abstract conditions were proposed in \cite{jinlorenz,QJinHankeRausBanach}. However, the validity of these conditions remains unclear.

In this paper, we propose several heuristic rules and, as  main contribution, provide a convergence analysis by postulating so-called auto-regularisation conditions. They reduce to Muckenhoupt-type conditions in the setting of $\ell^q$ regularisation with a diagonal operator $A$, allowing us to subsequently investigate their validity for typical cases. The main results 
are Theorems~\ref{th:two}, \ref{th:three}, \ref{th:four} in Section~\ref{sec:two}
(with abstract conditions), and Theorems~\ref{th:six}, \ref{th:seven}, \ref{th:eight} (specific convergence conditions for the diagonal case) in Section~\ref{DiagSec}. 
Furthermore, we provide a detailed 
numerical case study of these heuristic methods in Section~\ref{sec:four}.

\section{Heuristic parameter choice rules}\label{sec:two}
The heuristic rules we consider select the parameter $\alpha $
in the Tikhonov functional \eqref{tikfun} 
\[
\alpha= \alpha_\ast\in\argmin_{\alpha\in(0,\alpha_\text{max})}\psi(\alpha,y^\delta),
\]
as the global minimiser of a functional
\[
\psi:(0,\alpha_\text{max})\times Y\to\mathbb{R}.
\]
We investigate the following four functionals:
\begin{alignat*}{2}
    \psi_{\text{HD}}(\alpha,y^\delta)&:=\frac{1}{\alpha}\|\yd-A\xad\|^2,&&\text{the heuristic discrepancy rule (HD)},
    \\
    \psi_{\text{HR}}(\alpha,y^\delta)&:=\frac{1}{\alpha}\langle y^\delta-A\xadii,y^\delta-A\xad\rangle,&
    \qquad&\text{the Hanke-Raus rule (HR)},
    \\
    \psi_{\text{SQO}}(\alpha,y^\delta)&:=D^{\text{sym}}_{\xi^{II}_{\alpha,\delta},\xi^\delta_\alpha}(x^{II}_{\alpha,\delta},x^\delta_\alpha),&&\text{the symmetric quasi-optimality rule (sQO)},
    \\
    \psi_{\text{RQO}}(\alpha,y^\delta)&:=D_{\xi^\delta_\alpha}(x^{II}_{\alpha,\delta},x^\delta_\alpha),&&\text{the right quasi-optimality rule (rQO)},
\end{alignat*}
where $\xadii$ is the second Bregman iterate, $D^\text{sym}$ and $D$ denote the symmetric and regular Bregman distances, all of which will be defined in the following.

Note that the heuristic discrepancy rule is sometimes also referred to as the Hanke-Raus rule (as the rules coincide for Landweber iteration). For clarity, it is preferable to name this method as the heuristic analogue of the classical discrepancy rule. In particular, this rule is the only one for which some convergence analysis has been done (cf.~\cite{jinlorenz,QJinLagrangian,QJinHankeRausBanach,QJinBregIt}). A discrete version of the quasi-optimality rules was also investigated in \cite{jinlorenz}. As mentioned, a numerical study of some rules was also done in \cite{kindermannmutimbu}.

These rules are well established in the linear case. However, except for the HD rule, their extension to the convex setting is certainly not obvious. We consider Bregman iteration as the natural analogue of Tikhonov iteration and we opt to define the latter three rules utilising the second Bregman iterate. The HR rule was considered in \cite{kindermannmutimbu}, whilst the quasi-optimality rules considered here are entirely novel.

Note that beside the right quasi-optimality rule it is possible to 
define a \textquotedblleft left\textquotedblright  quasi-optimality version using the functional
$D_{\xi^{II}_{\alpha,\delta}}(x^\delta_\alpha,x^{II}_{\alpha,\delta})$. 
However, the numerical results for this rule were quite subpar (as was demonstrated in \cite{kindermannmutimbu}), so we 
decided not to consider it further.

\subsection{Preliminaries}
Note that in contrast with linear regularisation theory, one cannot (in general) prove convergence of the regularised solution to $x^\dagger$ in the norm, but it is common to use the Bregman distance (cf.~\cite{bregman1967relaxation,osherburger}):
\[
D_\xi(x^\delta_\alpha,x^\dagger):=\mathcal{R}(x^\delta_\alpha)-\mathcal{R}(x^\dagger)-\langle \xi,x^\delta_\alpha-x^\dagger\rangle_{X^\ast\times X}.
\]
The Bregman distance is not a distance (a.k.a. a metric) as it does not satisfy the triangle inequality; nor is it in general symmetric. We do, however, have the following useful so-called three point identity (cf.~\cite{kindermannbanach}):
\begin{equation}
D_\xi(x^\delta_\alpha,x^\dagger)=D_{\xi_\alpha}(x^\delta_\alpha,x_\alpha)+D_\xi(x_\alpha,x^\dagger)+\langle\xi_\alpha-\xi,x^\delta_\alpha-x_\alpha\rangle.\label{threepoint}
\end{equation}
For any $\xi_1\in\partial\mathcal{R}(x_1)$ and $\xi_2\in\partial\mathcal{R}(x_2)$, one can also define the \emph{symmetric Bregman distance} as
    \begin{align*}
        D^{\text{sym}}_{\xi_1,\xi_2}:X\times X&\to\mathbb{R}\cup\{\infty\}
        \\
        (x_1,x_2)&\mapsto\langle\xi_1-\xi_2,x_1-x_2\rangle.
    \end{align*}
In the following, we use a super/sub-scripted $\delta$ to indicate variables associated with noisy data and its absence indicates the corresponding variables for exact data. For instance,
\[
x^\delta_\alpha\in\argmin_{x\in X}\mathcal{T}^\delta_\alpha(x)\qquad\text{and}\qquad x_\alpha\in\argmin_{x\in X}\mathcal{T}_\alpha(x),
\]   
where $\mathcal{T}_\alpha$ indicates the Tikhonov functional with exact data $y$ replacing $y^\delta$.
It is useful to define residuals as variables. In particular,
\[
\pad:=y^\delta-Ax^\delta_\alpha\qquad\text{and}\qquad\pa:=y-Ax_\alpha.
\]
The following  proposition can be proven via standard convex analysis:
\begin{proposition}
    The residual $\pad$ may be expressed in terms of a proximal mapping operator,
    \begin{equation}
  	\operatorname{prox}_{\mathcal{J}}:
  	Y\to Y, \qquad 
	\operatorname{prox}_{\mathcal{J}}=(I+\partial\mathcal{J})^{-1},\label{proximalsubdiff}
	\end{equation}
	in the form
	\begin{equation}\label{pr}
	p^\delta_\alpha:= \prox_{\mathcal{J}}(y^\delta),\qquad\text{with }\mathcal{J}=\alpha \mathcal{R}^\ast\circ \frac{1}{\alpha} A^\ast.
	\end{equation}
\end{proposition}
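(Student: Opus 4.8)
The plan is to derive the formula by writing down the optimality condition for the Tikhonov minimiser $x^\delta_\alpha$, reformulating it as a fixed-point equation for the residual $p^\delta_\alpha = y^\delta - A x^\delta_\alpha$, and recognising that equation as the defining relation of the proximal operator in \eqref{proximalsubdiff}. First I would note that since $\mathcal{T}^\delta_\alpha$ is convex, proper and coercive with the stated assumptions on $\mathcal{R}$, the minimiser $x^\delta_\alpha$ is characterised by the inclusion $0 \in \partial \mathcal{T}^\delta_\alpha(x^\delta_\alpha)$, which by the sum rule (valid here because the quadratic term is everywhere finite and continuous) reads $0 \in A^\ast(A x^\delta_\alpha - y^\delta) + \alpha\, \partial\mathcal{R}(x^\delta_\alpha)$. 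Rewriting in terms of $p^\delta_\alpha$, this becomes $A^\ast p^\delta_\alpha \in \alpha\, \partial\mathcal{R}(x^\delta_\alpha)$, equivalently $\tfrac{1}{\alpha} A^\ast p^\delta_\alpha \in \partial\mathcal{R}(x^\delta_\alpha)$.

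Next I would invert the subgradient relation using convex duality: $\xi \in \partial\mathcal{R}(x)$ if and only if $x \in \partial\mathcal{R}^\ast(\xi)$ (the Fenchel–Young / biconjugate identity, using that $\mathcal{R}$ is proper, convex and weak-$\ast$ lsc so that $\mathcal{R}^{\ast\ast} = \mathcal{R}$). Hence $x^\delta_\alpha \in \partial\mathcal{R}^\ast\!\big(\tfrac{1}{\alpha} A^\ast p^\delta_\alpha\big)$. Applying $A$ and multiplying by $\alpha$ appropriately, I would combine this with $p^\delta_\alpha = y^\delta - A x^\delta_\alpha$ to get
\[
y^\delta = p^\delta_\alpha + A x^\delta_\alpha \in p^\delta_\alpha + A\, \partial\mathcal{R}^\ast\!\Big(\tfrac{1}{\alpha} A^\ast p^\delta_\alpha\Big) = \big(I + \partial\mathcal{J}\big)(p^\delta_\alpha),
\]
where $\mathcal{J} = \alpha \mathcal{R}^\ast \circ \tfrac{1}{\alpha} A^\ast$, using the chain rule $\partial\mathcal{J}(p) = A\, \partial\mathcal{R}^\ast(\tfrac{1}{\alpha} A^\ast p)$ (the factors of $\alpha$ and $\tfrac1\alpha$ cancel in the outer scaling). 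Since $\mathcal{J}$ is convex, proper and lower semicontinuous on the Hilbert space $Y$, the resolvent $(I + \partial\mathcal{J})^{-1}$ is single-valued and everywhere defined, so inverting yields $p^\delta_\alpha = \prox_{\mathcal{J}}(y^\delta)$, which is \eqref{pr}.

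The main obstacle is bookkeeping with the chain and sum rules for subdifferentials in the Banach-space setting: one must justify that $\partial(\mathcal{R}^\ast \circ \tfrac1\alpha A^\ast)(p) = \tfrac1\alpha A \,\partial\mathcal{R}^\ast(\tfrac1\alpha A^\ast p)$ and that $\partial\mathcal{J}$ composed back through $A^\ast$ reproduces the optimality condition exactly. A clean way around delicate constraint-qualification issues for the chain rule is to argue directly: verify that $p := p^\delta_\alpha$ satisfies $y^\delta - p \in \partial\mathcal{J}(p)$ by unwinding the definitions of $\mathcal{J}$, $\mathcal{R}^\ast$ and the Fenchel identity, rather than invoking a general composition theorem; this only uses $\mathcal{R}^{\ast\ast} = \mathcal{R}$ and the already-established first-order condition. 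One should also remark that although $Y$ is a Hilbert space (so $\prox$ makes sense), $\mathcal{R}^\ast$ is a functional on $X^\ast$, and the composition with $\tfrac1\alpha A^\ast : Y \to X^\ast$ lands $\mathcal{J}$ on $Y$, so everything is consistent; the coercivity of $\mathcal{R}$ guarantees $\mathcal{R}^\ast$ is finite and continuous near the relevant points, which supplies the needed regularity.
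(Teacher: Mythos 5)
Your argument is correct and is precisely the ``standard convex analysis'' the paper invokes without writing out: optimality condition $\tfrac{1}{\alpha}A^\ast \pad\in\partial\mathcal{R}(\xad)$, Fenchel--Young inversion to $\xad\in\partial\mathcal{R}^\ast(\tfrac{1}{\alpha}A^\ast\pad)$, the (always valid, no constraint qualification needed) inclusion $A\,\partial\mathcal{R}^\ast(\tfrac1\alpha A^\ast p)\subseteq\partial\mathcal{J}(p)$, and single-valuedness of the resolvent from monotonicity of $\partial\mathcal{J}$. You correctly identify that only the easy directions of the duality and chain rules are needed, so the proof is complete as sketched.
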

Note that in the above proposition, $A^\ast:Y\to X^\ast$ denotes the adjoint operator and $\mathcal{R}^\ast$ denotes the \emph{Fenchel conjugate} (cf.~\cite{beckfista,convexmonotone}).
We will make use of the firm non-expansivity of the proximal mapping operator:
\begin{equation}\label{firmnonexpansivity}
    \langle\prox_\mathcal{J}(y_1)-\prox_\mathcal{J}(y_2),y_1-y_2\rangle\ge\|\prox_\mathcal{J}(y_1)-\prox_{\mathcal{J}}(y_2)\|^2,
\end{equation}
cf.~\cite{beckfista,convexmonotone}.

The optimality condition for the Tikhonov functional may be stated as follows:
\begin{equation}
0\in A^\ast(Ax^\delta_\alpha-y^\delta)+\alpha\partial\mathcal{R}(x^\delta_\alpha), \label{tikhopt}
\end{equation}
for all $\alpha\in(0,\alpha_\text{max})$ and $y^\delta\in Y$. We may thus define 
\[
	\xi_\alpha:=\partial\mathcal{R}(x^\delta_\alpha)= -\frac{1}{\alpha}A^\ast(Ax_\alpha-y)\qquad\text{and}\qquad
	\xi^\delta_\alpha:=\partial\mathcal{R}(x_\alpha)=-\frac{1}{\alpha}A^\ast(Ax^\delta_\alpha-y^\delta),
\]
for the subgradients of $\mathcal{R}$ at $x_\alpha$ and $x^\delta_\alpha$, respectively.

The nonlinear analogue of iterated Tikhonov regularisation, as stated earlier, may be defined as \emph{Bregman iteration} (cf.~\cite{bregman1967relaxation,osher2005iterative}).  In particular, the \emph{second Bregman iterate} may be computed as
\begin{equation}
x^{II}_{\alpha,\delta}\in\argmin_{x\in X}\frac{1}{2}\|Ax-y^\delta\|^2+\alpha D_{\xi^\delta_\alpha}(x,x^\delta_\alpha).\label{bregmaniterate}
\end{equation}
As observed in \cite{yinbregman}, we can also compute $x^{II}_{\alpha,\delta}$ by minimising a simpler expression which does not involve the Bregman distance. Similarly as for the Tikhonov functional, we can state the optimality condition for the subsequent Bregman functional in the same manner:
    \[
    0\in A^\ast(Ax^{II}_{\alpha,\delta}-y^\delta-(y^\delta-Ax^\delta_\alpha))+\alpha\partial\mathcal{R}(x^{II}_{\alpha,\delta}),
    \]
and analogously, $x^{II}_\alpha$ for exact data. As before, we introduce the corresponding expressions for the subgradients
\[
    \xi^{II}_\alpha:=-\frac{1}{\alpha}A^\ast(Ax^{II}_{\alpha}-y-(y-Ax_\alpha))\qquad\text{and}\qquad
    \xi^{II}_{\alpha,\delta}:=-\frac{1}{\alpha}A^\ast(Ax^{II}_{\alpha,\delta}-y^\delta-(y^\delta-Ax^\delta_\alpha)),
\]
of $\mathcal{R}$ at $x^{II}_\alpha$ and $x^{II}_{\alpha,\delta}$, respectively.

The residual with respect to the second Bregman iterate may also be written in terms of the proximal point mapping:
\[
    p^{II}_{\alpha,\delta}:= y^\delta-Ax^{II}_{\alpha,\delta}=\prox_{\mathcal{J}}\left(y^\delta+p^\delta_\alpha\right)-p^\delta_\alpha\qquad\text{and}\qquad p^{II}_{\alpha}:= y-Ax^{II}_{\alpha}=\prox_{\mathcal{J}}\left(y+p_\alpha\right)-p_\alpha,
\]
for all $\alpha\in(0,\alpha_\text{max})$ and $y^\delta\in Y$ and $\mathcal{J}$ 
as in \eqref{pr}.  For notational purposes, we define the following
\begin{align*}
    \Dy = \yd - y,\qquad \Dp=\pad-\pa,\qquad \Dpii=\padii-p^{II}_\alpha.
\end{align*}
We state a useful estimate:
\begin{lemma} \label{Lemma1}
We have the following upper bound for the data propagation error:
\begin{equation} \label{funccal}
 D_{\xi_\alpha}(x^\delta_\alpha,x_\alpha) \leq 
\frac{1}{\alpha} \langle \Dy  - \Dp, \Dp\rangle 
\end{equation}
for all $\alpha\in(0,\alpha_{\text{max}})$ and $y,y^\delta\in Y$.
\end{lemma}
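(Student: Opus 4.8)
The plan is to derive \eqref{funccal} from the two optimality conditions \eqref{tikhopt} (one for the noisy data and one for the exact data) together with the elementary fact that the symmetric Bregman distance dominates each of its two constituent one‑sided Bregman distances. First I would record the optimality conditions in residual form: by \eqref{tikhopt} and the definition of the subgradients one has $\alpha\xi^\delta_\alpha=A^\ast\pad$ and $\alpha\xi_\alpha=A^\ast\pa$, so that subtracting gives $\alpha(\xi^\delta_\alpha-\xi_\alpha)=A^\ast\Dp$.

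Next I would evaluate the symmetric Bregman distance between $\xad$ and $\xa$. Using the identity just obtained,
\[
D^{\text{sym}}_{\xi^\delta_\alpha,\xi_\alpha}(\xad,\xa)=\langle\xi^\delta_\alpha-\xi_\alpha,\xad-\xa\rangle=\frac{1}{\alpha}\langle\Dp,A(\xad-\xa)\rangle .
\]
The only remaining algebra is to rewrite $A(\xad-\xa)$ in terms of the residuals: $A(\xad-\xa)=(A\xad-\yd)-(A\xa-y)+(\yd-y)=-\Dp+\Dy$, so the right‑hand side above equals $\frac{1}{\alpha}\langle\Dp,\Dy-\Dp\rangle=\frac{1}{\alpha}\langle\Dy-\Dp,\Dp\rangle$, which is exactly the bound claimed in \eqref{funccal}.

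Finally I would invoke the splitting $D^{\text{sym}}_{\xi_1,\xi_2}(x_1,x_2)=D_{\xi_1}(x_2,x_1)+D_{\xi_2}(x_1,x_2)$, valid for any $\xi_i\in\partial\mathcal{R}(x_i)$, which exhibits the symmetric Bregman distance as a sum of two nonnegative Bregman distances of the convex functional $\mathcal{R}$. Choosing $x_1=\xad$, $\xi_1=\xi^\delta_\alpha$, $x_2=\xa$, $\xi_2=\xi_\alpha$ then gives $D_{\xi_\alpha}(\xad,\xa)\le D^{\text{sym}}_{\xi^\delta_\alpha,\xi_\alpha}(\xad,\xa)$, and combining this with the preceding computation completes the proof.

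Each step is short; the main point requiring care is the sign bookkeeping, in particular the residual identity $A(\xad-\xa)=\Dy-\Dp$, which is where the noise term $\Dy$ enters. As a consistency check one may observe that firm nonexpansivity \eqref{firmnonexpansivity} applied to $\pad=\prox_{\mathcal{J}}(\yd)$ and $\pa=\prox_{\mathcal{J}}(y)$ (cf.~\eqref{pr}) yields $\langle\Dp,\Dy\rangle\ge\|\Dp\|^2$, i.e.\ $\langle\Dy-\Dp,\Dp\rangle\ge0$, so the asserted estimate is compatible with the nonnegativity of the Bregman distance; an alternative, purely proximal‑operator proof can be built along these lines, but the route through the optimality conditions above is the most economical.
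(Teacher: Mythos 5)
Your proof is correct, but it follows a genuinely different route from the paper's. The paper argues via the \emph{minimality} of $\xad$ for the Tikhonov functional, i.e.\ from $\mathcal{T}^\delta_\alpha(\xad)\le\mathcal{T}^\delta_\alpha(\xa)$ it first obtains
$D_{\xi_\alpha}(\xad,\xa)\le-\tfrac{1}{2\alpha}\|A(\xad-\xa)\|^2-\tfrac{1}{\alpha}\langle y-\yd,A(\xad-\xa)\rangle$
and then passes to the stated bound with the full factor $\tfrac{1}{\alpha}\|A(\xad-\xa)\|^2$ in place of $\tfrac{1}{2\alpha}\|A(\xad-\xa)\|^2$. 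As written, that second step replaces $-\tfrac{1}{2\alpha}\|v\|^2$ by the smaller quantity $-\tfrac{1}{\alpha}\|v\|^2$, so it is not a consequence of the first line; the sharper constant genuinely requires the first-order information, not just the comparison of functional values. Your argument supplies exactly that: subtracting the two optimality conditions \eqref{tikhopt} gives $\alpha(\xi^\delta_\alpha-\xi_\alpha)=A^\ast\Dp$, pairing with $\xad-\xa$ and using $A(\xad-\xa)=\Dy-\Dp$ identifies $D^{\text{sym}}_{\xi^\delta_\alpha,\xi_\alpha}(\xad,\xa)$ with $\tfrac{1}{\alpha}\langle\Dy-\Dp,\Dp\rangle$ \emph{exactly} (an identity, not an estimate), and the splitting of the symmetric Bregman distance into two nonnegative one-sided distances then gives $D_{\xi_\alpha}(\xad,\xa)\le D^{\text{sym}}_{\xi^\delta_\alpha,\xi_\alpha}(\xad,\xa)$. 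So your route is not only valid but closes the gap in the paper's chain of inequalities; the only price is that you use the subgradient/monotonicity structure rather than the mere minimising property, which in this setting is freely available. Your side remark that firm nonexpansivity \eqref{firmnonexpansivity} guarantees $\langle\Dy-\Dp,\Dp\rangle\ge0$ is also correct and is a useful sanity check on the sign of the right-hand side.
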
 
\begin{proof}
We may estimate
\begin{equation}
\begin{aligned}
D_{\xi_\alpha}(x^\delta_\alpha,x_\alpha)&\le-\frac{1}{2\alpha}\|A(x^\delta_\alpha-x_\alpha)\|^2-\frac{1}{\alpha}\langle y-y^\delta,A(x^\delta_\alpha-x_\alpha)\rangle
\\
&\le -\frac{1}{\alpha}\langle A(x^\delta_\alpha-x_\alpha),A(x^\delta_\alpha-x_\alpha)\rangle-\frac{1}{\alpha}\langle y-y^\delta,A(x^\delta_\alpha-x_\alpha)\rangle
\\
&=-\frac{1}{\alpha}\langle Ax^\delta_\alpha-y^\delta+y-Ax_\alpha,A(x^\delta_\alpha-x_\alpha)\rangle, \label{usefulinequality}
\end{aligned}
\end{equation}
which proves the desired result.
\end{proof}
\subsection{Error estimates}
Convergence results for convex regularisation are well known. We state some standard results \cite{osherburger,variationalimaging}: we assume henceforth a regularity condition on $x^\dagger$; namely, that it fulfills the following \emph{source condition}:
\begin{equation}
\partial\mathcal{R}(x^\dagger)\in\operatorname{range}(A^\ast)\iff\exists\, w:A^\ast w\in\partial\mathcal{R}(x^\dagger). \label{source}
\end{equation}
Subsequently, $\xi:=A^\ast w$ is the subgradient of $\mathcal{R}$ at $x^\dagger$.
We remark that much of the analysis 
(concerning the convergence results, 
not the rates) 
below is valid with \eqref{source} replaced by weaker conditions, e.g., in form of 
variational source conditions 
\cite{HoKaPoSc,Fl,FlHo}.
We have the following error estimates:
\begin{proposition}
Let $x^\dagger$ satisfy the source condition \eqref{source}. Then
\begin{alignat*}{2}
D_\xi(x_\alpha,x^\dagger)&\le\frac{\|w\|^2}{2}\alpha,&\qquad
\|Ax_\alpha-y\|&\le 2\|w\|\alpha,
\\
D_{\xi_\alpha}(x^\delta_\alpha,x_\alpha)&\le\frac{\delta^2}{2\alpha},&\qquad
\|A(x^\delta_\alpha-x_\alpha)\|&\le 2\delta,
\end{alignat*}
and
\begin{align*}
D_\xi(x^\delta_\alpha,x^\dagger)\le\frac{1}{2}\left(\frac{\delta}{\sqrt{\alpha}}+\sqrt{\alpha}\|w\|\right)^2,\qquad
\|Ax^\delta_\alpha-y^\delta\|\le\delta+2\|w\|\alpha,
\end{align*}
for all $\alpha\in(0,\alpha_\text{max})$ and $y,y^\delta\in Y$.
\end{proposition}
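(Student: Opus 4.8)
The plan is to prove the six estimates in three stages, matching the three rows of the display: an approximation (bias) bound for exact data, a data-propagation bound, and a combined bound for noisy data. The approximation and combined bounds each come from a minimality comparison of the relevant Tikhonov minimiser with $x^\dagger$, together with the source condition \ref{source}; the data-propagation bound comes from firm non-expansivity of the proximal map and Lemma~\ref{Lemma1}.

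For the exact-data row: since $x^\dagger$ is the $\mathcal{R}$-minimising solution, $Ax^\dagger=y$, so minimality of $\xa$ for $\mathcal{T}_\alpha$ gives $\tfrac12\|A\xa-y\|^2+\alpha(\mathcal{R}(\xa)-\mathcal{R}(x^\dagger))\le0$. Writing $\xi=A^\ast w$, I would rewrite $\mathcal{R}(\xa)-\mathcal{R}(x^\dagger)=D_\xi(\xa,x^\dagger)+\langle w,A\xa-y\rangle$; then Cauchy--Schwarz and Young's inequality applied to $\alpha\langle w,A\xa-y\rangle$ make the term $\tfrac12\|A\xa-y\|^2$ cancel and leave $\alpha D_\xi(\xa,x^\dagger)\le\tfrac{\alpha^2}{2}\|w\|^2$. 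Keeping instead the nonnegative Bregman term yields $\tfrac12\|A\xa-y\|^2\le\alpha\|w\|\,\|A\xa-y\|$, hence $\|A\xa-y\|\le2\alpha\|w\|$.

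For the data-propagation row: using the proximal representation \ref{pr}, $\pad=\prox_{\mathcal{J}}(\yd)$ and $\pa=\prox_{\mathcal{J}}(y)$, firm non-expansivity \ref{firmnonexpansivity} with $y_1=\yd$, $y_2=y$ gives $\|\Dp\|^2\le\langle\Dy,\Dp\rangle\le\delta\|\Dp\|$, so $\|\Dp\|\le\delta$; since $A(\xad-\xa)=\Dy-\Dp$, the triangle inequality gives $\|A(\xad-\xa)\|\le2\delta$. For the Bregman term I would insert $\langle\Dy-\Dp,\Dp\rangle=\langle\Dy,\Dp\rangle-\|\Dp\|^2\le\tfrac12\|\Dy\|^2-\tfrac12\|\Dp\|^2\le\tfrac{\delta^2}{2}$ (by Young) into Lemma~\ref{Lemma1}, obtaining $D_{\xi_\alpha}(\xad,\xa)\le\tfrac{\delta^2}{2\alpha}$.

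For the combined row I would repeat the first argument with noisy data: minimality of $\xad$ gives $\mathcal{T}^\delta_\alpha(\xad)\le\mathcal{T}^\delta_\alpha(x^\dagger)\le\tfrac{\delta^2}{2}+\alpha\mathcal{R}(x^\dagger)$; rewriting $\mathcal{R}(\xad)-\mathcal{R}(x^\dagger)$ via the source condition, splitting $A\xad-y=(A\xad-\yd)+(\yd-y)$, bounding $\|\yd-y\|\le\delta$ and using Young once more, the quadratic term cancels and the right-hand side collects into $\tfrac12(\delta+\alpha\|w\|)^2=\tfrac{\alpha}{2}\bigl(\tfrac{\delta}{\sqrt\alpha}+\sqrt\alpha\|w\|\bigr)^2$. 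The residual bound then follows from $\|\pad\|\le\|\pa\|+\|\Dp\|\le2\alpha\|w\|+\delta$. None of this is a genuine obstacle --- these are the classical Burger--Osher-type estimates --- but the last row requires care: combining the first two rows through the three-point identity \ref{threepoint} introduces a cross term and yields suboptimal constants (e.g. $3\delta$ in the residual), so it is essential to argue directly from the minimality of $\xad$ and complete the square, recognising $\delta^2+2\alpha\delta\|w\|+\alpha^2\|w\|^2=(\delta+\alpha\|w\|)^2$.
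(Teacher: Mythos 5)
The paper does not prove this proposition at all --- it is stated as a standard result with a citation to the Burger--Osher / variational-imaging literature --- so there is no in-paper argument to compare against; your proof correctly supplies the classical one. All three rows check out: the minimality comparisons $\mathcal{T}_\alpha(x_\alpha)\le\mathcal{T}_\alpha(x^\dagger)$ and $\mathcal{T}^\delta_\alpha(x^\delta_\alpha)\le\mathcal{T}^\delta_\alpha(x^\dagger)$ combined with the source-condition rewrite $\mathcal{R}(\cdot)-\mathcal{R}(x^\dagger)=D_\xi(\cdot,x^\dagger)+\langle w,A\cdot-y\rangle$ and Young's inequality give exactly the stated constants, and your derivation of the middle row from firm non-expansivity (yielding $\|\Dp\|\le\delta$) together with Lemma~\ref{Lemma1} is a nice use of the paper's own machinery in place of the more common symmetric minimality comparison of $x^\delta_\alpha$ and $x_\alpha$; both routes give $\delta^2/(2\alpha)$ and $2\delta$. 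Your closing remark is also apt: assembling the last row from the first two via the three-point identity \ref{threepoint} would degrade the constants, so arguing directly from the minimality of $x^\delta_\alpha$ is the right move.
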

Note also that
	\begin{equation}\label{PropResFuncEstimates}
	\|Ax^{II}_{\alpha,\delta}-y^\delta\|\le\|Ax^\delta_\alpha-y^\delta\|\qquad\text{and}\qquad\mathcal{R}(x^\delta_\alpha)\le\mathcal{R}(x^{II}_{\alpha,\delta}),
	\end{equation}
	for all $\alpha\in(0,\alpha_\text{max})$ and $y^\delta\in Y$. The inequalities 
	\eqref{PropResFuncEstimates}, of course, holds analogously for the noise-free variables. Note that from \eqref{source} and \eqref{threepoint}, we obtain the following estimate:
\begin{equation}
		D_\xi(x^\delta_{\alpha_\ast},x^\dagger)\le D_{\xi_{\alpha_\ast}}(x^\delta_{\alpha_\ast},x_{\alpha_\ast})+D_\xi(x_{\alpha_\ast},x^\dagger)+6\|w\|\delta,   \label{bregmantriangle}
		\end{equation} 
cf.~\cite{jinlorenz}. We will utilise this in the convergence proofs to come in the latter sections.

\subsection{The heuristic discrepancy rule}
In terms of the residual variables, the heuristic discrepancy functional may be expressed as
\begin{equation*}
\psi_{\text{HD}}(\alpha,y^\delta)=\frac{\|\pad\|^2}{\alpha},
\end{equation*}
and $\alpha_\ast$ is selected as its minimiser.
In the paper \cite{jinlorenz}, the following error estimate was derived:
\begin{theorem}
	Let $x^\dagger$ satisfy the source condition \eqref{source}, let $\alpha_\ast$ be chosen according to the heuristic discrepancy rule and suppose that $\delta^\ast:=\|Ax^\delta_\alpha-y^\delta\|\ne 0$. Then there exists a constant $C>0$ such that
	\begin{equation*}
	D_\xi(x^\delta_{\alpha_\ast},x^\dagger)\le C\left(1+\left(\frac{\delta}{\delta^\ast}\right)^2\right)\max\{\delta,\delta^\ast\}.
	\end{equation*}
\end{theorem}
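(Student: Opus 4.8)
The plan is to feed the minimality of $\alpha_\ast$ into the Bregman-distance estimate \eqref{bregmantriangle} and then to control the data-propagation term $D_{\xi_{\alpha_\ast}}(x^\delta_{\alpha_\ast},x_{\alpha_\ast})$ and the approximation error $D_\xi(x_{\alpha_\ast},x^\dagger)$ by two separate arguments. Throughout I write $p_{\alpha_\ast}:=y-Ax_{\alpha_\ast}$ and $p^\delta_{\alpha_\ast}:=y^\delta-Ax^\delta_{\alpha_\ast}$, so that $\delta^\ast=\|p^\delta_{\alpha_\ast}\|$.

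First I would extract a lower bound on $\alpha_\ast$. Since $\psi_{\text{HD}}(\alpha,y^\delta)=\|p^\delta_\alpha\|^2/\alpha$ and $\alpha_\ast$ is its global minimiser, $(\delta^\ast)^2/\alpha_\ast\le\|p^\delta_\alpha\|^2/\alpha$ for every admissible $\alpha$. Inserting $\|Ax^\delta_\alpha-y^\delta\|\le\delta+2\|w\|\alpha$ from the error estimates above and minimising $(\delta+2\|w\|\alpha)^2/\alpha$ over $\alpha$ — the minimiser being $\alpha=\delta/(2\|w\|)$, which lies in $(0,\alpha_\text{max})$ once $\delta$ is small enough; otherwise one evaluates at $\alpha_\text{max}$ and only the constant changes — gives $(\delta^\ast)^2/\alpha_\ast\le8\|w\|\delta$, i.e.\ $\alpha_\ast\ge(\delta^\ast)^2/(8\|w\|\delta)$. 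Note that $\delta^\ast\ne0$ then forces $\delta>0$, so $\delta/\delta^\ast$ is meaningful. Combined with $D_{\xi_{\alpha_\ast}}(x^\delta_{\alpha_\ast},x_{\alpha_\ast})\le\delta^2/(2\alpha_\ast)$ this already yields $D_{\xi_{\alpha_\ast}}(x^\delta_{\alpha_\ast},x_{\alpha_\ast})\le4\|w\|(\delta/\delta^\ast)^2\delta$.

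The approximation error $D_\xi(x_{\alpha_\ast},x^\dagger)$ is the delicate term, and here is where I expect the main obstacle. Minimality of $\alpha_\ast$ only supplies a \emph{lower} bound on $\alpha_\ast$, so the estimate $D_\xi(x_{\alpha_\ast},x^\dagger)\le\tfrac12\|w\|^2\alpha_\ast$ points the wrong way and has to be replaced by a residual-based bound. Minimality of $x_{\alpha_\ast}$ for $\mathcal{T}_{\alpha_\ast}$ gives $\mathcal{R}(x_{\alpha_\ast})-\mathcal{R}(x^\dagger)\le-\tfrac{1}{2\alpha_\ast}\|p_{\alpha_\ast}\|^2\le0$, and since $\xi=A^\ast w$ with $Ax^\dagger=y$ the linear part of the Bregman distance equals $-\langle\xi,x_{\alpha_\ast}-x^\dagger\rangle=\langle w,p_{\alpha_\ast}\rangle$; hence $D_\xi(x_{\alpha_\ast},x^\dagger)\le\|w\|\,\|p_{\alpha_\ast}\|$. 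It then remains to bound the noise-free residual by the noisy one: by the proximal representation \eqref{pr} one has $p_{\alpha_\ast}=\prox_{\mathcal J}(y)$ and $p^\delta_{\alpha_\ast}=\prox_{\mathcal J}(y^\delta)$ with the \emph{same} $\mathcal J$ (that with $\alpha=\alpha_\ast$), so non-expansivity of $\prox_{\mathcal J}$ (a consequence of \eqref{firmnonexpansivity}) yields $\|p_{\alpha_\ast}-p^\delta_{\alpha_\ast}\|\le\|\Dy\|\le\delta$, whence $\|p_{\alpha_\ast}\|\le\delta+\delta^\ast$ and $D_\xi(x_{\alpha_\ast},x^\dagger)\le\|w\|(\delta+\delta^\ast)$.

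Finally I would assemble the pieces. Plugging the two bounds into \eqref{bregmantriangle} gives
\[
D_\xi(x^\delta_{\alpha_\ast},x^\dagger)\le4\|w\|\Big(\tfrac{\delta}{\delta^\ast}\Big)^{2}\delta+\|w\|(\delta+\delta^\ast)+6\|w\|\delta,
\]
and estimating $\delta,\delta^\ast\le\max\{\delta,\delta^\ast\}$ together with the elementary inequality $4t+8\le8(1+t)$ for $t\ge0$ produces the claimed bound with $C=8\|w\|$. Apart from the residual trick in the third paragraph, everything is routine bookkeeping with the error estimates and the triangle-type inequality \eqref{bregmantriangle} already recorded above.
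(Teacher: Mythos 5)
Your argument is correct. Note that the paper itself does not prove this statement but quotes it from Jin--Lorenz; your proof is essentially the standard one from that reference: the global minimality of $\alpha_\ast$ applied at $\alpha=\delta/(2\|w\|)$ gives the lower bound $\alpha_\ast\ge(\delta^\ast)^2/(8\|w\|\delta)$, which controls the propagation term, while the approximation error is bounded through the residual rather than through $\alpha_\ast$ (which, as you rightly observe, is only bounded from below). The one place you deviate slightly is in estimating $\|p_{\alpha_\ast}\|$: you use non-expansivity of $\prox_{\mathcal J}$ to get $\|p_{\alpha_\ast}\|\le\delta^\ast+\delta$, whereas the cited proof uses the cruder chain $\|p_{\alpha_\ast}\|\le\|p^\delta_{\alpha_\ast}\|+\|y-y^\delta\|+\|A(x_{\alpha_\ast}-x^\delta_{\alpha_\ast})\|\le\delta^\ast+3\delta$; both suffice and yours is marginally sharper. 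The only loose end is the boundary case $\delta/(2\|w\|)\ge\alpha_{\max}$, which you flag: there the constant acquires a dependence on $\alpha_{\max}$ and an upper bound for $\delta$, which is harmless in the regime $\delta\to0$ for which the estimate is intended.
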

The above estimate is of restricted utility as one has no control over the value of $\delta^\ast$. If one assumes the condition
\begin{equation}
\|Q(y-y^\delta)\|\ge\varepsilon\|y-y^\delta\|,\label{jinnoisecondition}
\end{equation}
where $Q:Y\to(\overline{\text{range}(A)})^\perp$ is an orthogonal projection and $\varepsilon>0$, which was introduced by Hanke and Raus in \cite{hankeraus}, then one can prove that
\begin{equation}
D_\xi(x^\delta_{\alpha_\ast},x^\dagger)\le C\left(1+\frac{1}{\varepsilon^2}\right)\max\{\delta,\delta^\ast\};\label{jinbregmanestimate}
\end{equation}
thus without the troublesome prefactor  and using \eqref{jinnoisecondition} and \eqref{jinbregmanestimate}, it is possible to prove convergence of the regularisation method, i.e.,
\begin{equation}
D_\xi(x^\delta_{\alpha_\ast},x^\dagger)\to 0\qquad \text{as } \delta\to 0.\label{hankerausconvergence}
\end{equation}
The condition \eqref{jinnoisecondition} is quite abstract and if one considers the case in which $\overline{\text{range}(A)}=Y$, it would follow that $Q:Y\to\emptyset$ and consequently $\|Q(y-y^\delta)\|=0\ge\varepsilon\|y-y^\delta\|$, i.e., the condition is not satisfied.

Another noise condition was subsequently postulated in \cite{jinlorenz}; namely, if there exists $\varepsilon\in(0,1)$ such that $y^\delta-y\in\mathcal{N}$, where
\begin{equation}
\mathcal{N}:=\bigg\{\epsilon \in Y:\langle \epsilon,z\rangle\le(1-\varepsilon)\|\epsilon\|\|z\|\text{ for all }z\in\overline{A(\operatorname{dom}\partial\mathcal{R})} \bigg\};\label{jinlorenznewnoise}
\end{equation}
then, if $y^\delta-y\in\mathcal{N}$, it would follow that
	\begin{equation}
	D_\xi(x^\delta_{\alpha_\ast},x^\dagger)\le C\left(1+\frac{1}{1-(1-\varepsilon)^2}\right)\max\{\delta,\delta^\ast\}, \label{newjinestimate}
	\end{equation}
from which it is again possible to prove convergence. It still remains unclear when \eqref{jinlorenznewnoise} is satisfied.
It is actually the main aim of this 
paper to replace these conditions with 
more practical ones. 

For heuristic rules, it is often standard to show convergence of the selected parameter as the noise level tends to zero:
\begin{proposition}
	Let $\alpha_\ast$ be the minimiser of the heuristic discrepancy functional. Then $\alpha_\ast\to 0$ as $\delta\to 0$.
\end{proposition}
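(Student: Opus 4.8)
The plan is to show that the minimal value $\psi_{\text{HD}}(\alpha_\ast,\yd)$ is of order $\delta$, to deduce from this that the residual at $\alpha_\ast$ vanishes in the limit, and then to convert smallness of the residual into smallness of $\alpha_\ast$ by exploiting the monotonicity in $\alpha$ of the Tikhonov residual. I would argue by contradiction: suppose there are a sequence $\delta_n\to 0$ and a constant $c>0$ with $\alpha_\ast(\delta_n)\ge c$ for all $n$. The source condition \ref{source} is assumed throughout, and I exclude from the outset the degenerate case $w=0$ (equivalently $0\in\partial\mathcal{R}(x^\dagger)$), commented on below.

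For the upper bound I would simply evaluate $\psi_{\text{HD}}$ at a well-chosen comparison parameter. For $\delta$ small enough that $\bar\alpha:=\delta/(2\|w\|)\in(0,\alpha_{\text{max}})$, minimality of $\alpha_\ast$ together with the residual estimate $\|Ax^\delta_\alpha-\yd\|\le\delta+2\|w\|\alpha$ from the error estimates above gives
\[
\psi_{\text{HD}}(\alpha_\ast,\yd)\ \le\ \psi_{\text{HD}}(\bar\alpha,\yd)\ =\ \frac{\|Ax^\delta_{\bar\alpha}-\yd\|^2}{\bar\alpha}\ \le\ \frac{(2\delta)^2}{\delta/(2\|w\|)}\ =\ 8\|w\|\,\delta .
\]
Since $\psi_{\text{HD}}(\alpha_\ast,\yd)=\|\yd-Ax^\delta_{\alpha_\ast}\|^2/\alpha_\ast$ and $\alpha_\ast\le\alpha_{\text{max}}$, this already yields $\|Ax^\delta_{\alpha_\ast}-\yd\|^2\le 8\alpha_{\text{max}}\|w\|\,\delta\to 0$; invoking in addition $\|A(x^\delta_\alpha-x_\alpha)\|\le 2\delta$ and $\|\yd-y\|\le\delta$, the triangle inequality then gives $\|Ax_{\alpha_\ast}-y\|\le\|Ax^\delta_{\alpha_\ast}-\yd\|+3\delta\to 0$ along the chosen sequence.

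To conclude I would use that $\alpha\mapsto\|Ax_\alpha-y\|$ is nondecreasing on $(0,\alpha_{\text{max}})$ — a short standard argument built from the minimality inequalities for $\mathcal{T}_{\alpha_1}$ and $\mathcal{T}_{\alpha_2}$ — whence $\|Ax_{\alpha_\ast(\delta_n)}-y\|\ge\|Ax_c-y\|$. The quantity $\|Ax_c-y\|$ is strictly positive: were it zero, the optimality condition \ref{tikhopt} would force $0\in\partial\mathcal{R}(x_c)$, so that $x_c$ would be simultaneously a solution of $Ax=y$ and a global minimiser of $\mathcal{R}$, i.e.\ the source condition would hold with $w=0$, the case we excluded. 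Hence the left-hand side tends to $0$ while the right-hand side stays bounded away from $0$ — a contradiction — so $\alpha_\ast\to 0$. I expect the only genuine obstacle to be this last step: establishing the monotonicity of the Tikhonov residual and, more delicately, ruling out the degenerate situation in which the exact residual already vanishes at a positive regularisation parameter; by contrast, the upper bound and the ensuing smallness of the residual are immediate from the error estimates already recorded above.
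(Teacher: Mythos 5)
Your argument is correct, and it is worth noting that the paper does not reproduce a proof here but defers to \cite{jinlorenz}. Your route differs from that reference in the final, decisive step. The first half is the same in both: bound the minimal value $\psi_{\text{HD}}(\alpha_\ast,\yd)$ by $O(\delta)$ via a comparison parameter $\alpha\sim\delta$ and the residual estimate $\|Ax^\delta_\alpha-\yd\|\le\delta+2\|w\|\alpha$, then use $\alpha_\ast\le\alpha_{\text{max}}$ to conclude $\|Ax^\delta_{\alpha_\ast}-\yd\|\to 0$. From there, the standard argument passes to a subsequence with $\alpha_\ast\to\alpha_0>0$ and invokes stability/lower semicontinuity of $(\alpha,\yd)\mapsto\|Ax^\delta_\alpha-\yd\|$ to deduce $\|Ax_{\alpha_0}-y\|=0$; you instead convert the noisy residual into the exact one directly via the triangle inequality $\|Ax_{\alpha_\ast}-y\|\le\|Ax^\delta_{\alpha_\ast}-\yd\|+3\delta$ (using $\|A(x^\delta_\alpha-x_\alpha)\|\le 2\delta$, which is available since the source condition is a standing assumption in this paper) and then use the elementary monotonicity of $\alpha\mapsto\|Ax_\alpha-y\|$. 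This is more self-contained: it avoids any continuity-in-$\alpha$ or subsequence extraction, at the mild price of leaning on the source condition, which the stability argument does not need. Both routes must exclude the same degenerate situation — the exact residual vanishing at some positive $\alpha$, equivalently $0\in\partial\mathcal{R}(x^\dagger)$ — and you handle it correctly: $\|Ax_c-y\|=0$ forces $0\in\partial\mathcal{R}(x_c)$ by the optimality condition, so $x_c$ is simultaneously a solution and a global minimiser of $\mathcal{R}$, whence $\mathcal{R}(x^\dagger)=\mathcal{R}(x_c)=\min\mathcal{R}$ and $0\in\partial\mathcal{R}(x^\dagger)$, contradicting your exclusion. (One cosmetic remark: the choice $\bar\alpha=\delta/(2\|w\|)$ presumes $w\neq0$, but taking $\bar\alpha=\delta$ gives the same $O(\delta)$ bound for any $w$, so the upper-bound step does not actually depend on the non-degeneracy you assume for the endgame.)
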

For the proof, we refer to \cite{jinlorenz}.
In order to prove convergence, the most difficult part is to derive a condition that prevents $\alpha_\ast$ from decaying too rapidly. This always involves some restriction on the noise. In the next theorem, we impose such a noise restriction in the form of an \emph{auto-regularisation} condition:

\begin{theorem}\label{th:two}
	Let the source condition \eqref{source} be satisfied, $\alpha_\ast$ be selected according to the heuristic discrepancy rule, $\delta^\ast\ne 0$ and assume the auto-regularisation condition 
	\begin{equation}
D_{\xi_\alpha}(x^\delta_\alpha,x_\alpha)\le C\frac{\|\Dp\|^2}{\alpha},\label{newnoisecondition}
\end{equation}
	holds for all $\alpha\in(0,\alpha_\text{max})$ and $y^\delta\in Y$. Then it follows that the method converges; i.e.,
	\[
	D_\xi(x^\delta_{\alpha_\ast},x^\dagger)\to 0,
	\]
	as $\delta\to 0$.
\end{theorem}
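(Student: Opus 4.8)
Throughout I write $\pad,\pa$ for the residuals as defined above, and I recall the standard error estimates from the Proposition in Section~2.2.

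The plan is to start from the Bregman triangle estimate \eqref{bregmantriangle}, which for the selected parameter reads
\[
D_\xi(x^\delta_{\alpha_\ast},x^\dagger)\le D_{\xi_{\alpha_\ast}}(x^\delta_{\alpha_\ast},x_{\alpha_\ast})+D_\xi(x_{\alpha_\ast},x^\dagger)+6\|w\|\delta .
\]
The last term vanishes as $\delta\to 0$; the middle term is bounded by $\tfrac{\|w\|^2}{2}\alpha_\ast$ by the error estimates, hence tends to $0$ because $\alpha_\ast\to 0$ by the preceding proposition (this is where $\delta^\ast\ne 0$ and the well-posedness of the rule are used). So the whole argument reduces to showing that the data propagation error $D_{\xi_{\alpha_\ast}}(x^\delta_{\alpha_\ast},x_{\alpha_\ast})$ tends to zero.

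To this end I would first invoke the auto-regularisation condition \eqref{newnoisecondition} at $\alpha=\alpha_\ast$, giving $D_{\xi_{\alpha_\ast}}(x^\delta_{\alpha_\ast},x_{\alpha_\ast})\le C\|\pad-\pa\|^2/\alpha_\ast$ at $\alpha=\alpha_\ast$. Splitting $\|\pad-\pa\|\le\|\pad\|+\|\pa\|$ by the triangle inequality and controlling the exact-data residual by the a priori bound $\|\pa\|=\|Ax_\alpha-y\|\le 2\|w\|\alpha$, then squaring and absorbing constants, yields
\[
D_{\xi_{\alpha_\ast}}(x^\delta_{\alpha_\ast},x_{\alpha_\ast})\le c_1\frac{\|\pad\|^2}{\alpha_\ast}\Big|_{\alpha=\alpha_\ast}+c_2\|w\|^2\alpha_\ast=c_1\,\psi_{\text{HD}}(\alpha_\ast,y^\delta)+c_2\|w\|^2\alpha_\ast ,
\]
and again the last summand vanishes since $\alpha_\ast\to 0$. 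It then remains to bound $\psi_{\text{HD}}(\alpha_\ast,y^\delta)$, and here I would use that $\alpha_\ast$ is the \emph{global} minimiser, so $\psi_{\text{HD}}(\alpha_\ast,y^\delta)\le\psi_{\text{HD}}(\alpha,y^\delta)$ for every admissible $\alpha$; choosing the comparison value $\alpha=\delta$ (legitimate once $\delta<\alpha_{\text{max}}$) and using $\|Ax^\delta_\alpha-y^\delta\|\le\delta+2\|w\|\alpha$ gives
\[
\psi_{\text{HD}}(\alpha_\ast,y^\delta)\le\psi_{\text{HD}}(\delta,y^\delta)\le\frac{(\delta+2\|w\|\delta)^2}{\delta}=(1+2\|w\|)^2\delta\to 0 .
\]
Combining the three contributions in the Bregman triangle estimate then proves $D_\xi(x^\delta_{\alpha_\ast},x^\dagger)\to 0$.

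The only genuinely delicate point is the bridge between the auto-regularisation condition \eqref{newnoisecondition}, which is phrased in terms of the residual difference $\Dp$, and the functional $\psi_{\text{HD}}$, which only sees the noisy residual $\pad$; this is precisely what the elementary a priori estimate $\|\pa\|\le 2\|w\|\alpha$ resolves, since it shows the exact-data residual contributes only a term of order $\alpha_\ast$, harmless after evaluation at the chosen parameter. Everything else is routine bookkeeping with the standard error estimates and the minimising property of $\alpha_\ast$, so I expect no further obstacle.
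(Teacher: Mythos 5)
Your proposal is correct and follows essentially the same route as the paper's own proof: the Bregman triangle estimate \eqref{bregmantriangle}, the auto-regularisation condition at $\alpha_\ast$, the bound $\|p_{\alpha_\ast}\|\le 2\|w\|\alpha_\ast$ for the exact-data residual, and the global-minimiser comparison $\psi_{\text{HD}}(\alpha_\ast,y^\delta)\le\psi_{\text{HD}}(\alpha,y^\delta)$ with $\alpha\sim\delta$. The only cosmetic difference is that the paper phrases the splitting of $\|\Dp_\ast\|$ in terms of $\bigl(\sqrt{\psi_{\text{HD}}(\alpha_\ast,y^\delta)}+\sqrt{\psi_{\text{HD}}(\alpha_\ast,y)}\bigr)^2$ and keeps the comparison parameter abstract ($\alpha(\delta)\to 0$ with $\delta^2/\alpha(\delta)\to 0$), whereas you make the concrete choice $\alpha=\delta$; both are equivalent.
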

\begin{proof}
	From the estimate \eqref{bregmantriangle}, one may then immediately estimate the data propagation error courtesy of the auto-regularisation condition as
		\begin{equation*}
		D_{\xi_{\alpha_\ast}}(x^\delta_{\alpha_\ast},x_{\alpha_\ast})\le C\frac{\|\Dp_\ast\|^2}{\alpha_\ast}.
		\end{equation*}
		Since $\alpha_\ast$ minimises the heuristic discrepancy functional, it follows that
		\begin{equation*}
		\begin{aligned} 
		\psi_{\text{HD}}(\alpha_\ast,y^\delta)&\le\psi_{\text{HD}}(\alpha,y^\delta)=\frac{\|p^\delta_\alpha\|^2}{\alpha}\le\frac{(\delta+2\alpha\|w\|)^2}{\alpha}=\frac{\delta^2+4\alpha\|w\|\delta+4\alpha^2\|w\|^2}{\alpha}
		\\
		&=\frac{\delta^2}{\alpha}+4\|w\|\delta+4\alpha\|w\|^2=\left(\frac{\delta}{\sqrt{\alpha}}+2\|w\|\sqrt{\alpha}\right)^2.
		\end{aligned}
		\end{equation*}
		Hence, one may choose $\alpha=\alpha(\delta)$ such that $\alpha(\delta)\to 0$ as $\delta^2/\alpha(\delta)\to 0$ for $\delta\to 0$. Furthermore,
		\begin{equation*}
		\begin{aligned}
		\psi_{\text{HD}}(\alpha_\ast,y)&=\frac{\|p_{\alpha_\ast}\|^2}{\alpha_\ast}\le \frac{4\|w\|^2\alpha^2_\ast}{\alpha_\ast}=4\|w\|^2\alpha_\ast\xrightarrow{\delta\to 0}0.
		\end{aligned}
		\end{equation*}
		Thus, we may conclude that
		\begin{equation*}
		\begin{aligned}
		\frac{\|\Dp_\ast\|^2}{\alpha_\ast}&\le\left(\frac{\|p^\delta_{\alpha_\ast}\|}{\sqrt{\alpha_\ast}}+\frac{\|p_{\alpha_\ast}\|}{\sqrt{\alpha_\ast}}\right)^2=\left(\sqrt{\psi_{\text{HD}}(\alpha_\ast,y^\delta)}+\sqrt{\psi_{\text{HD}}(\alpha_\ast,y)}\right)^2\xrightarrow{\delta\downarrow 0}\left(\sqrt{0}+\sqrt{0}\right)^2=0.
		\end{aligned}
		\end{equation*}
		For the approximation error, it follows that
		\begin{equation*}
		D_\xi(x_{\alpha_\ast},x^\dagger)\le\frac{\|w\|^2}{2}\alpha_\ast\xrightarrow{\delta\to 0}0.
		\end{equation*}
		Hence each term in \eqref{bregmantriangle} tends to 0 as $\delta\to 0$.
\end{proof}
From Lemma~\ref{Lemma1}, it suffices for \eqref{newnoisecondition} to show that there exists $C>0$ such that
\begin{equation}
\langle \Dp, \Dy -\Dp\rangle \leq C \|\Dp\|^2
\end{equation}
Obviously, it is then enough to prove
\begin{equation}
\langle \Dp,\Dy\rangle\le C\|\Dp\|^2.
\label{hdabstractineq}
\end{equation} for some positive constant $C$.

The auto-regularisation condition is an implicit condition on the noise. One may observe that it resembles the condition of \cite{kinderabstract} in the linear case. Certainly, in this form, it is still an \emph{abstract} condition, although we will reduce it in Section~\ref{DiagSec} to a more reasonable form from which we can extract more understanding. 

\subsubsection{Convergence rates}
With the aid of the source condition, auto-regularisation condition and an additional regularity condition, we can even derive rates of convergence.
We start with the following proposition:
\begin{proposition}\label{propraz}
Suppose that $\partial \mathcal{R}^\ast: X^\ast\to X^{\ast\ast}$
is continuous at $0$.
%
Then, for all $\yd$ with $\|\yd\|\geq C$, there is a constant such that 
\begin{equation}\label{HDConRat}
\alpha \leq C \frac{1}{\alpha}\|\pad\|^2   \qquad \forall \alpha \in (0,\alpha_{\text{max}}). \end{equation}
\end{proposition}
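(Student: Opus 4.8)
The plan is to reduce the claimed inequality \eqref{HDConRat} to a uniform lower bound on the residual. Since $\psi_{\text{HD}}(\alpha,\yd)=\frac{1}{\alpha}\|\pad\|^2$, the assertion $\alpha\le C\frac{1}{\alpha}\|\pad\|^2$ is equivalent to $\|\pad\|\ge c\,\alpha$ for all $\alpha\in(0,\alpha_{\text{max}})$ with $c=1/\sqrt C$: given such a bound one has $\frac{1}{\alpha}\|\pad\|^2\ge c^2\alpha$, so \eqref{HDConRat} holds with constant $1/c^2$. Hence it suffices to show that there is $c>0$ with $\|\pad\|\ge c\,\alpha$ for every $\alpha\in(0,\alpha_{\text{max}})$, whenever $\|\yd\|$ is large enough. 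The two structural inputs are the optimality condition \eqref{tikhopt}, which gives $\frac{1}{\alpha}A^\ast\pad=-\frac{1}{\alpha}A^\ast(A\xad-\yd)\in\partial\mathcal{R}(\xad)$, equivalently (by Fenchel--Young, under the canonical embedding $X\hookrightarrow X^{\ast\ast}$) $\xad\in\partial\mathcal{R}^\ast\!\left(\frac{1}{\alpha}A^\ast\pad\right)$, together with the trivial identity $\yd=\pad+A\xad$.

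First I would invoke the hypothesis: by continuity of $\partial\mathcal{R}^\ast$ at $0$ there exist $\rho>0$ and $M<\infty$ (e.g.\ $M=\|\partial\mathcal{R}^\ast(0)\|+1$, with $\partial\mathcal{R}^\ast(0)=\argmin\mathcal{R}$) such that $\|\partial\mathcal{R}^\ast(\xi)\|_{X^{\ast\ast}}\le M$ for all $\xi$ with $\|\xi\|_{X^\ast}\le\rho$. Set $c:=\rho/\|A\|$ and suppose, for contradiction, that $\|\pad\|<c\,\alpha$ for some $\alpha\in(0,\alpha_{\text{max}})$. Then $\left\|\frac{1}{\alpha}A^\ast\pad\right\|\le\frac{\|A\|}{\alpha}\|\pad\|<\rho$, so the inclusion above forces $\|\xad\|\le M$, hence $\|A\xad\|\le\|A\|M$, and therefore $\|\pad\|=\|\yd-A\xad\|\ge\|\yd\|-\|A\|M$. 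Combined with $\|\pad\|<c\,\alpha<c\,\alpha_{\text{max}}$ this yields $\|\yd\|<\|A\|M+c\,\alpha_{\text{max}}$. Thus, taking the threshold in the statement to be $C:=\|A\|M+c\,\alpha_{\text{max}}$, the hypothesis $\|\yd\|\ge C$ excludes this possibility, so $\|\pad\|\ge c\,\alpha$ on all of $(0,\alpha_{\text{max}})$, and the proposition follows with the (a priori different) constant $C=\|A\|^2/\rho^2$; replacing both constants by their maximum makes the statement literally true as written.

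I do not anticipate a real obstacle; the crux is simply spotting the right dichotomy — smallness of $\pad$ relative to $\alpha$ pushes the subgradient $\frac{1}{\alpha}A^\ast\pad$ into the range where continuity of $\partial\mathcal{R}^\ast$ applies, which bounds $\|\xad\|$, hence $\|A\xad\|$, uniformly in $\alpha$, and thus keeps $\pad$ near the large datum $\yd$. The parts that need care are essentially bookkeeping: verifying that $\xad\in X$ genuinely lies in $\partial\mathcal{R}^\ast\!\left(\frac{1}{\alpha}A^\ast\pad\right)\subseteq X^{\ast\ast}$ — immediate from the Fenchel--Young equality $\mathcal{R}(\xad)+\mathcal{R}^\ast\!\left(\frac{1}{\alpha}A^\ast\pad\right)=\left\langle\frac{1}{\alpha}A^\ast\pad,\xad\right\rangle$ encoded in \eqref{tikhopt}, together with isometry of the embedding so that norm bounds transfer — and tracking that the symbol $C$ does double duty as a lower threshold on $\|\yd\|$ and as the constant in \eqref{HDConRat}. (One also tacitly uses $\alpha_{\text{max}}<\infty$; without it the statement fails, since letting $\alpha$ grow would send $\pad\to\yd-A\,\argmin\mathcal{R}$, which stays bounded while $\alpha$ does not.)
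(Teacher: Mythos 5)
Your proof is correct and follows essentially the same route as the paper's: both rest on the decomposition $\yd=\pad+A\xad$ together with the optimality condition $\xad\in\partial\mathcal{R}^\ast\!\left(\tfrac{1}{\alpha}A^\ast\pad\right)$ and the continuity of $\partial\mathcal{R}^\ast$ at $0$, used to show that $\|\pad\|/\alpha$ cannot be small while $\|\yd\|$ stays bounded away from $0$. Your quantitative dichotomy (local boundedness of $\partial\mathcal{R}^\ast$ near $0$ plus the triangle inequality) is in fact a slightly cleaner rendering than the paper's sequence-based contradiction, since it avoids the implicit uses of $\langle\pad,\yd\rangle\ge 0$ and $\partial\mathcal{R}^\ast(0)=0$ in the published argument.
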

\begin{proof}
As $\langle\pad,\yd\rangle\geq 0$, we have
\begin{align*} \|\yd\|^2  &= \langle\pad,\yd\rangle + \langle
\partial \mathcal{R}^*(A^\ast\frac{\pad}{\alpha}),A^\ast y\rangle  \leq 
\alpha_\text{max} \langle\frac{\pad}{\alpha},\yd\rangle + \|\partial \mathcal{R}^*(A^\ast\frac{\pad}{\alpha})\|_{X^{\ast\ast}}
\|A^\ast\|_{Y,X^\ast}\|\yd\| \\
& \leq \|\yd\| \left( 
\alpha_\text{max}  \left\|\frac{\pad}{\alpha}\right\|  + \|\partial \mathcal{R}^*(A^\ast\frac{\pad}{\alpha})\|_{X^{\ast\ast}}
\|A^\ast\|_{Y,X^\ast}
\right).
\end{align*}
Now assume that \eqref{HDConRat} does not hold. Then there is a sequence 
with $\|\yd\|\geq C$ and $\|\frac{\pad}{\alpha}\|\to 0$. 
However, this implies that $A^\ast\frac{\pad}{\alpha}\to 0$ in $X^\ast$ and 
by  continuity, 
also that 
\[
\|\partial \mathcal{R}^*(A^\ast\frac{\pad}{\alpha})\|_{X^{\ast\ast}}\to 0. \] 
However, this leads to a contradiction as then  
\[\|\yd\|^2 \leq C \|\yd\| \|\frac{\pad}{\alpha}\| \to 0. \]
Thus \eqref{HDConRat} must hold. 
\end{proof}
We now state the main convergence rates result for the HD rule:
\begin{proposition}
	Let the source condition \eqref{source} hold, $\alpha_\ast$ be 
	selected according to the heuristic discrepancy rule and suppose the auto-regularisation condition \eqref{newnoisecondition} is satisfied. Assume, in addition, that $\|\yd\| \geq  C$
	and that $\partial \mathcal{R}^*:X^\ast\to X^{\ast\ast}$ is continuous at $0$. 
	Then
	\[
	D_\xi(x^\delta_{\alpha_\ast},x^\dagger)=\mathcal{O}(\delta),
	\]
	for $\delta>0$ sufficiently small.
\end{proposition}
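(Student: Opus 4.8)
The plan is to combine the convergence argument from Theorem~\ref{th:two} with the quantitative lower bound on $\psi_{\text{HD}}(\alpha_\ast,y^\delta)$ supplied by Proposition~\ref{propraz}, which replaces the purely qualitative ``$\alpha_\ast\to 0$'' statement with a rate. First I would recall, exactly as in the proof of Theorem~\ref{th:two}, the upper bound
\[
\psi_{\text{HD}}(\alpha_\ast,y^\delta)\le\left(\frac{\delta}{\sqrt\alpha}+2\|w\|\sqrt\alpha\right)^2\qquad\forall\,\alpha\in(0,\alpha_\text{max}),
\]
and optimise the right-hand side: taking $\alpha\sim\delta/\|w\|$ (which lies in $(0,\alpha_\text{max})$ for $\delta$ small) gives $\psi_{\text{HD}}(\alpha_\ast,y^\delta)\le C\delta$. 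On the other hand, since $\|y^\delta\|\ge C$, Proposition~\ref{propraz} with $\alpha=\alpha_\ast$ yields $\alpha_\ast\le C\,\psi_{\text{HD}}(\alpha_\ast,y^\delta)$, so combining the two inequalities produces $\alpha_\ast=\mathcal{O}(\delta)$.

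Next I would bound each of the three terms in the triangle-type estimate \eqref{bregmantriangle}. The approximation error is immediate: $D_\xi(x_{\alpha_\ast},x^\dagger)\le\tfrac{\|w\|^2}{2}\alpha_\ast=\mathcal{O}(\delta)$, and the cross term $6\|w\|\delta$ is already $\mathcal{O}(\delta)$. For the data-propagation term I would invoke the auto-regularisation condition \eqref{newnoisecondition}, which gives $D_{\xi_{\alpha_\ast}}(x^\delta_{\alpha_\ast},x_{\alpha_\ast})\le C\|\Dp_\ast\|^2/\alpha_\ast$, and then control $\|\Dp_\ast\|^2/\alpha_\ast$ exactly as at the end of the proof of Theorem~\ref{th:two}:
\[
\frac{\|\Dp_\ast\|^2}{\alpha_\ast}\le\left(\sqrt{\psi_{\text{HD}}(\alpha_\ast,y^\delta)}+\sqrt{\psi_{\text{HD}}(\alpha_\ast,y)}\right)^2.
\]
The first summand is $\le\sqrt{C\delta}$ by the step above. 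For the second, $\psi_{\text{HD}}(\alpha_\ast,y)=\|p_{\alpha_\ast}\|^2/\alpha_\ast\le 4\|w\|^2\alpha_\ast=\mathcal{O}(\delta)$ using the residual estimate $\|p_{\alpha_\ast}\|=\|Ax_{\alpha_\ast}-y\|\le2\|w\|\alpha_\ast$ and the bound $\alpha_\ast=\mathcal{O}(\delta)$ just obtained. Hence $\|\Dp_\ast\|^2/\alpha_\ast=\mathcal{O}(\delta)$, so $D_{\xi_{\alpha_\ast}}(x^\delta_{\alpha_\ast},x_{\alpha_\ast})=\mathcal{O}(\delta)$, and feeding all three bounds into \eqref{bregmantriangle} gives $D_\xi(x^\delta_{\alpha_\ast},x^\dagger)=\mathcal{O}(\delta)$.

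The only genuinely new ingredient beyond the proof of Theorem~\ref{th:two} is the $\alpha_\ast=\mathcal{O}(\delta)$ estimate, and the main obstacle is precisely there: one must be careful that the $a\ priori$ choice $\alpha\sim\delta$ used to bound $\psi_{\text{HD}}(\alpha_\ast,y^\delta)$ from above is admissible (i.e.\ stays in $(0,\alpha_\text{max})$), which holds once $\delta$ is small enough, and that the constant $C$ in the hypothesis $\|y^\delta\|\ge C$ is the one furnished by Proposition~\ref{propraz} so that \eqref{HDConRat} applies. Everything else is a direct transcription of the estimates already established, now tracked with explicit powers of $\delta$ rather than merely as null sequences.
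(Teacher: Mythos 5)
Your proposal is correct and follows essentially the same route as the paper: both rest on Proposition~\ref{propraz} together with minimality to get $\alpha_\ast\le C\psi_{\text{HD}}(\alpha,y^\delta)$, the decomposition \eqref{bregmantriangle}, the auto-regularisation condition with the bound $\|\Dp_\ast\|^2/\alpha_\ast\le\bigl(\sqrt{\psi_{\text{HD}}(\alpha_\ast,y^\delta)}+\sqrt{\psi_{\text{HD}}(\alpha_\ast,y)}\bigr)^2$, and the a~priori choice $\alpha\sim\delta$. The only difference is organisational: you extract $\alpha_\ast=\mathcal{O}(\delta)$ and $\psi_{\text{HD}}(\alpha_\ast,y^\delta)=\mathcal{O}(\delta)$ up front, whereas the paper keeps $\alpha$ generic and sets $\alpha=\delta$ at the very end.
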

\begin{proof}
%
Note that from Proposition~\ref{propraz}
and since $\alpha_\ast$ is the global minimiser, for any $\alpha$, we have that 
$\alpha_\ast\le C\psi_{\text{HD}}(\alpha_\ast,y^\delta)\leq 
C \psi_{\text{HD}}(\alpha,y^\delta)$.
Observe that from \eqref{bregmantriangle}, it follows that
\begin{align*}
    D_{\xi}(x^\delta_{\alpha_\ast},x^\dagger)&\le D_{\xi_{\alpha_\ast}}(x^\delta_{\alpha_\ast},x_{\alpha_\ast})+\frac{\|w\|^2}{2}\alpha_\ast+6\|w\|\delta
    \\
    &\le\left(\sqrt{\psi_{\text{HD}}(\alpha,y^\delta)}+\sqrt{\psi_{\text{HD}}(\alpha_\ast,y)} \right)^2+C\delta+C\alpha_\ast
    \\
    &\le\left(\frac{\delta}{\sqrt{\alpha}}+C\sqrt{\alpha}+C\sqrt{\alpha_\ast} \right)^2+C\delta+C\alpha_\ast
    \\
    &=\mathcal{O}\left(\left(\frac{\delta}{\sqrt{\alpha}}+\sqrt{\alpha}\right)^2+\frac{\delta^2}{\alpha}+\alpha+\delta \right)\qquad\text{since }\alpha_\ast\le C\psi_{\text{HD}}(\alpha,y^\delta),
    \\
    &=\mathcal{O}\left(\delta\right),
\end{align*}
choosing $\alpha=\alpha(\delta)=\delta$.
\end{proof}


\subsection{The Hanke-Raus rule}	
As with the HD rule, the Hanke-Raus functional may be reexpressed as
\begin{equation*}
\psi_{\text{HR}}(\alpha,y^\delta)=\frac{1}{\alpha}\langle \padii,\pad \rangle.
\end{equation*}
To the best of the authors' knowledge, in contrast to the heuristic discrepancy rule, the Hanke-Raus rule has not yet been rigorously analysed in the convex variational setting although it has been tested numerically for total variation regularisation (cf.~\cite{kindermannmutimbu}).

Note that the Hanke-Raus functional is not expressed in terms of a norm, thus there is no a-priori guarantee that it remains positive (as it is in the linear case). We therefore provide the following proposition:
\begin{proposition}
	We have that
	\[
	\psi_{\text{HR}}(\alpha,y^\delta)\ge 0,
	\]
	for all $\alpha\in(0,\alpha_\text{max})$ and $y^\delta\in Y$.
\end{proposition}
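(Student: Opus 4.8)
The plan is to express $\psi_{\text{HR}}(\alpha,y^\delta) = \frac{1}{\alpha}\langle \padii,\pad\rangle$ in terms of the proximal mapping and then exploit the firm non-expansivity \eqref{firmnonexpansivity}. Recall that $\pad = \prox_{\mathcal{J}}(\yd)$ and that the second Bregman residual satisfies $\padii = \prox_{\mathcal{J}}(\yd+\pad)-\pad$. A key structural observation is that, writing $u := \prox_{\mathcal{J}}(\yd+\pad)$, we have $\padii = u - \pad$, so that $\langle\padii,\pad\rangle = \langle u - \pad,\pad\rangle = \langle \prox_{\mathcal{J}}(\yd+\pad) - \prox_{\mathcal{J}}(\yd),\pad\rangle$, since $\pad = \prox_{\mathcal{J}}(\yd)$ itself. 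Thus
\[
\alpha\,\psi_{\text{HR}}(\alpha,y^\delta) = \langle \prox_{\mathcal{J}}(\yd+\pad) - \prox_{\mathcal{J}}(\yd),\; \pad\rangle.
\]

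Now I would apply the firm non-expansivity inequality \eqref{firmnonexpansivity} with $y_1 = \yd + \pad$ and $y_2 = \yd$: the difference of arguments is $y_1 - y_2 = \pad$, and the difference of the proximal images is exactly $\prox_{\mathcal{J}}(\yd+\pad)-\prox_{\mathcal{J}}(\yd) = \padii$. Hence \eqref{firmnonexpansivity} gives
\[
\langle \padii,\pad\rangle \;=\; \langle \prox_{\mathcal{J}}(y_1)-\prox_{\mathcal{J}}(y_2),\,y_1-y_2\rangle \;\ge\; \|\prox_{\mathcal{J}}(y_1)-\prox_{\mathcal{J}}(y_2)\|^2 \;=\; \|\padii\|^2 \;\ge\; 0.
\]
Dividing by $\alpha > 0$ yields $\psi_{\text{HR}}(\alpha,y^\delta) \ge 0$, and since $\alpha\in(0,\alpha_{\text{max}})$ and $\yd\in Y$ were arbitrary, the claim follows.

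The only step requiring care — and the main (minor) obstacle — is verifying the algebraic identity $\langle\padii,\pad\rangle = \langle \prox_{\mathcal{J}}(\yd+\pad)-\prox_{\mathcal{J}}(\yd),\,\pad\rangle$, i.e. correctly matching the arguments of the two proximal evaluations so that their difference of inputs equals $\pad$ and their difference of outputs equals $\padii$. This hinges on the identity $\pad = \prox_{\mathcal{J}}(\yd)$ from \eqref{pr} together with the formula $\padii = \prox_{\mathcal{J}}(\yd+\pad)-\pad$ stated just before the definition of $\Dy,\Dp,\Dpii$. Once these are in hand, the inequality \eqref{firmnonexpansivity} does all the work and no further estimates are needed; in fact the argument even gives the slightly stronger conclusion $\psi_{\text{HR}}(\alpha,y^\delta)\ge \frac{1}{\alpha}\|\padii\|^2$.
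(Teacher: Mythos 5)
Your argument is correct and is essentially identical to the paper's proof: both rewrite $\langle\padii,\pad\rangle$ as $\langle\prox_{\mathcal{J}}(\yd+\pad)-\prox_{\mathcal{J}}(\yd),(\yd+\pad)-\yd\rangle$ and invoke firm non-expansivity \eqref{firmnonexpansivity}. Your additional observation that the same step yields $\psi_{\text{HR}}(\alpha,\yd)\ge\frac{1}{\alpha}\|\padii\|^2$ is a correct (if unexploited) strengthening.
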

\begin{proof}
	We obtain that
	\begin{align*}
	\psi_{\text{HR}}(\alpha,y^\delta)&
	=\frac{1}{\alpha}\left\langle\prox_{\mathcal{J}}(y^\delta+p^\delta_\alpha)-p^\delta_\alpha,p^\delta_\alpha\right\rangle
	=\frac{1}{\alpha}\left\langle\prox_{\mathcal{J}}(y^\delta+p^\delta_\alpha)-\prox_{\mathcal{J}}(y^\delta),p^\delta_\alpha\right\rangle
	\\
	&=\frac{1}{\alpha}\left\langle\prox_{\mathcal{J}}(y^\delta+p^\delta_\alpha)-\prox_{\mathcal{J}}(y^\delta),(y^\delta+p^\delta_\alpha)-y^\delta\right\rangle\ge 0,
	\end{align*}
	for all $\alpha\in(0,\alpha_\text{max})$, which follows from \eqref{firmnonexpansivity}.
\end{proof}

\begin{proposition}
	We have that
	\[
	\psi_{\text{HR}}(\alpha,y^\delta)\le\psi_{\text{HD}}(\alpha,y^\delta),
	\]
	for all $\alpha\in(0,\alpha_\text{max})$ and $y^\delta\in Y$.
\end{proposition}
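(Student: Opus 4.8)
The plan is to reduce both functionals to expressions in the proximal mapping $\prox_{\mathcal{J}}$ and then exploit that $\prox_{\mathcal{J}}$ is $1$-Lipschitz. First I would rewrite $\psi_{\text{HR}}$ exactly as in the proof of the preceding positivity proposition: using $\pad=\prox_{\mathcal{J}}(\yd)$ and $\padii=\prox_{\mathcal{J}}(\yd+\pad)-\pad$, one gets
\[
\psi_{\text{HR}}(\alpha,\yd)=\frac{1}{\alpha}\left\langle \prox_{\mathcal{J}}(\yd+\pad)-\prox_{\mathcal{J}}(\yd),\,(\yd+\pad)-\yd\right\rangle .
\]
Since $\psi_{\text{HD}}(\alpha,\yd)=\frac{1}{\alpha}\|\pad\|^2=\frac{1}{\alpha}\|(\yd+\pad)-\yd\|^2$, the claim reduces to the general inequality $\langle \prox_{\mathcal{J}}(y_1)-\prox_{\mathcal{J}}(y_2),\,y_1-y_2\rangle\le\|y_1-y_2\|^2$, to be applied with $y_1=\yd+\pad$ and $y_2=\yd$.

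The key step is proving this general inequality, and here the firm non-expansivity \eqref{firmnonexpansivity} has to be used with some care, since as stated it only supplies a \emph{lower} bound on $\langle \prox_{\mathcal{J}}(y_1)-\prox_{\mathcal{J}}(y_2),\,y_1-y_2\rangle$. The remedy is to first deduce plain non-expansivity of $\prox_{\mathcal{J}}$: combining \eqref{firmnonexpansivity} with the Cauchy--Schwarz inequality yields $\|\prox_{\mathcal{J}}(y_1)-\prox_{\mathcal{J}}(y_2)\|^2\le\langle \prox_{\mathcal{J}}(y_1)-\prox_{\mathcal{J}}(y_2),\,y_1-y_2\rangle\le\|\prox_{\mathcal{J}}(y_1)-\prox_{\mathcal{J}}(y_2)\|\,\|y_1-y_2\|$, hence $\|\prox_{\mathcal{J}}(y_1)-\prox_{\mathcal{J}}(y_2)\|\le\|y_1-y_2\|$. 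A second application of Cauchy--Schwarz then gives $\langle \prox_{\mathcal{J}}(y_1)-\prox_{\mathcal{J}}(y_2),\,y_1-y_2\rangle\le\|\prox_{\mathcal{J}}(y_1)-\prox_{\mathcal{J}}(y_2)\|\,\|y_1-y_2\|\le\|y_1-y_2\|^2$, exactly as required.

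Combining the two steps, $\langle\padii,\pad\rangle\le\|\pad\|^2$, and dividing by $\alpha>0$ gives $\psi_{\text{HR}}(\alpha,\yd)\le\psi_{\text{HD}}(\alpha,\yd)$ for all $\alpha\in(0,\alpha_{\text{max}})$ and $\yd\in Y$. I expect the only real obstacle to be the realisation that \eqref{firmnonexpansivity} points the ``wrong way'' and must first be converted into $1$-Lipschitz continuity of $\prox_{\mathcal{J}}$; the remainder is a routine computation mirroring the proof of the positivity of $\psi_{\text{HR}}$.
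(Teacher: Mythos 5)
Your proposal is correct and follows essentially the same route as the paper: rewrite $\psi_{\text{HR}}$ via the proximal mapping, apply Cauchy--Schwarz, and use non-expansivity of $\prox_{\mathcal{J}}$ to bound the result by $\frac{1}{\alpha}\|\pad\|^2$. The only difference is that you explicitly derive the $1$-Lipschitz property from the firm non-expansivity \eqref{firmnonexpansivity}, a step the paper leaves implicit by simply citing that inequality.
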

\begin{proof}
	We can estimate
	\begin{align*}
	\psi_{\text{HR}}(\alpha,y^\delta)&=\frac{1}{\alpha}\left\langle\prox_{\mathcal{J}^\ast}(y^\delta+p^\delta_\alpha)-\prox_{\mathcal{J}^\ast}(y^\delta),y^\delta+p^\delta_\alpha-y^\delta\right\rangle
	\\
	&\le\frac{1}{\alpha}\|\prox_{\mathcal{J}^\ast}(y^\delta+p^\delta_\alpha)-\prox_{\mathcal{J}^\ast}(y^\delta)\|\|p^\delta_\alpha\|
	\le\frac{1}{\alpha}\|p^\delta_\alpha\|^2
	=\psi_{\text{HD}}(\alpha,y^\delta),
	\end{align*}
	for all $\alpha\in(0,\alpha_\text{max})$, where we have used \eqref{firmnonexpansivity}.
\end{proof}

\begin{proposition}
	Let $\alpha_\ast$ be the minimiser of the Hanke-Raus functional. Then $\alpha_\ast\to 0$ as $\delta\to 0$.
\end{proposition}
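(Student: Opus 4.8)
The plan is to mirror the corresponding statement for the HD rule (Proposition before Theorem~\ref{th:two}), exploiting the fact—already established in the preceding proposition—that $\psi_{\text{HR}}(\alpha,y^\delta)\le\psi_{\text{HD}}(\alpha,y^\delta)$ for all $\alpha$, together with the nonnegativity of $\psi_{\text{HR}}$. First I would fix $\alpha=\alpha(\delta)$ as in the HD proof (so that $\alpha(\delta)\to 0$ and $\delta^2/\alpha(\delta)\to 0$ as $\delta\to 0$, e.g.\ $\alpha(\delta)=\sqrt{\delta}$). Since $\alpha_\ast$ is the global minimiser of $\psi_{\text{HR}}(\cdot,y^\delta)$, we have
\[
0\le\psi_{\text{HR}}(\alpha_\ast,y^\delta)\le\psi_{\text{HR}}(\alpha(\delta),y^\delta)\le\psi_{\text{HD}}(\alpha(\delta),y^\delta)\le\left(\frac{\delta}{\sqrt{\alpha(\delta)}}+2\|w\|\sqrt{\alpha(\delta)}\right)^2\xrightarrow{\delta\to 0}0,
\]
using the bound on $\psi_{\text{HD}}$ from the error estimates in the source-condition proposition. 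So $\psi_{\text{HR}}(\alpha_\ast,y^\delta)\to 0$.

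Next I would argue by contradiction: suppose $\alpha_\ast\not\to 0$, so along some sequence $\delta_n\to 0$ one has $\alpha_\ast(\delta_n)\ge c>0$. The idea is that $\psi_{\text{HR}}(\alpha,y^\delta)$, evaluated at a parameter bounded away from zero, cannot tend to $0$ unless the (noise-free) quantity $\langle p^{II}_\alpha,p_\alpha\rangle/\alpha$ also vanishes there, and for $\alpha$ bounded away from $0$ this forces $p_\alpha=0$, i.e.\ $Ax_\alpha=y$, which (by the residual identity $p_\alpha=\prox_{\mathcal{J}}(y)$ and injectivity/regularity considerations for $\alpha<\alpha_{\text{max}}$) contradicts ill-posedness unless $y\in\operatorname{range}(A)$ trivially—but even then it contradicts $\psi_{\text{HR}}(\alpha,y)$ being bounded below by a positive constant on compact $\alpha$-intervals, which is the analogue of the statement used implicitly for the HD rule. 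More concretely, I would split
\[
\psi_{\text{HR}}(\alpha,y^\delta)=\frac{1}{\alpha}\langle p^{II}_{\alpha,\delta},p^\delta_\alpha\rangle
\]
into the exact-data term plus perturbation terms controlled by $\Delta y$, $\Delta p_\alpha$, $\Delta p^{II}_\alpha$, each of which $\to 0$ uniformly on $\alpha\in[c,\alpha_{\text{max}})$ as $\delta\to 0$ (using the error estimates of the earlier proposition and firm nonexpansivity of $\prox_{\mathcal{J}}$), so that $\psi_{\text{HR}}(\alpha_\ast(\delta_n),y)\to 0$; then I would invoke the continuity of $\alpha\mapsto\psi_{\text{HR}}(\alpha,y)$ and the fact that it is strictly positive for $y\notin\ker$-type degeneracies on any compact subinterval of $(0,\alpha_{\text{max}})$ to derive the contradiction. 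For the proof as written in the paper one likely just cites \cite{jinlorenz} or adapts its argument verbatim, since the structure is identical to the HD case.

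The main obstacle is the last step: establishing that $\psi_{\text{HR}}(\alpha,y)$ stays bounded away from zero on compact $\alpha$-subintervals of $(0,\alpha_{\text{max}})$, or equivalently that its infimum over such a set is positive. This is where one must use that the exact data $y$ is \emph{not} itself a regularised solution for any fixed $\alpha>0$ (an ill-posedness/nontriviality assumption implicitly in force throughout), ruling out $p_\alpha=0$. Once that is in hand, the perturbation estimates and the sandwich by $\psi_{\text{HD}}$ make the rest routine, in complete parallel to the HD proposition whose proof is deferred to \cite{jinlorenz}.
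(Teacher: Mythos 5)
Your first step is exactly the paper's: sandwich $\psi_{\text{HR}}$ by $\psi_{\text{HD}}$ via the preceding proposition, use minimality of $\alpha_\ast$ and the source-condition bound on the residual to get $\psi_{\text{HR}}(\alpha_\ast,y^\delta)\to 0$, and then reduce to the argument of \cite{jinlorenz} for the heuristic discrepancy rule --- which is all the paper writes (it deduces $\|Ax^\delta_{\alpha_\ast}-y^\delta\|\to 0$ and cites \cite{jinlorenz} for the rest), and you correctly anticipate this. Where you diverge is in your reconstruction of the cited argument: you propose to derive the contradiction from a uniform positive lower bound for $\psi_{\text{HR}}(\cdot,y)$ on compact subintervals of $(0,\alpha_{\text{max}})$, whereas the \cite{jinlorenz} route goes through the vanishing of the residual $\|Ax^\delta_{\alpha_\ast}-y^\delta\|$, boundedness of $\mathcal{R}(x^\delta_{\alpha_\ast})$ when $\alpha_\ast\geq c>0$, and a weak-$\ast$ compactness/lower-semicontinuity argument. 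Your variant is not wrong in spirit, but as stated it has a gap: firm nonexpansivity only yields $\psi_{\text{HR}}(\alpha,y)\geq\|p^{II}_\alpha\|^2/\alpha$, so positivity of the infimum requires ruling out $p^{II}_\alpha=0$ (not merely $p_\alpha=0$) on the compact interval, a nondegeneracy hypothesis that is not among the proposition's assumptions and that you only gesture at (\textquotedblleft injectivity/regularity considerations\textquotedblright). Since the paper delegates precisely this part to \cite{jinlorenz}, your proposal is acceptable as a plan, but if you intend to spell the contradiction out rather than cite it, you should follow the compactness argument rather than the positivity-on-compacts argument, or else state the required nondegeneracy explicitly.
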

\begin{proof}
	Since $\psi_{\text{HR}}(\alpha_\ast,y^\delta)\le\psi_{\text{HD}}(\alpha_\ast,y^\delta)\to 0$ as $\delta\to 0$, we deduce that $\|Ax^\delta_{\alpha_\ast}-y^\delta\|\to 0$ as $\delta\to 0$. Then the proof is the same as the one given in \cite{jinlorenz} for $\alpha_\ast$ selected according to the heuristic discrepancy rule.
\end{proof}
Next we state the main convergence theorem for the Hanke-Raus rule for which we again require an auto-regularisation condition:
\begin{theorem}\label{th:three}
	Let the source condition \eqref{source} be satisfied and let $\alpha_\ast$ be the minimiser of $\psi_{HR}(\alpha,y^\delta)$ and suppose that there exists a positive constant $C>0$ such that
	\begin{equation}
	\begin{aligned}
	D_{\xi_\alpha}(x^\delta_\alpha,x_\alpha)&\le C\frac{1}{\alpha}\langle \Dpii,\Dp\rangle
	,
	\end{aligned}  \label{noiseconditionhankeraus}
	\end{equation}
	for all $\alpha\in(0,\alpha_\text{max})$ and $y^\delta\in Y$. Then
	\begin{equation*}
	D_\xi(x^{\delta}_{\alpha_\ast},x^\dagger)\to 0\text{ as }\delta\to 0.
	\end{equation*}
\end{theorem}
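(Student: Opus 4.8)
The plan is to mirror the proof of Theorem~\ref{th:two}. By the triangle-type estimate \eqref{bregmantriangle},
\[
D_\xi(x^\delta_{\alpha_\ast},x^\dagger)\le D_{\xi_{\alpha_\ast}}(x^\delta_{\alpha_\ast},x_{\alpha_\ast})+\frac{\|w\|^2}{2}\alpha_\ast+6\|w\|\delta,
\]
and since $\alpha_\ast\to0$ (the proposition just above) the last two summands vanish as $\delta\to0$; so it remains to control the data propagation error. By the auto-regularisation condition \eqref{noiseconditionhankeraus} this reduces, in turn, to showing that $\tfrac1\alpha\langle\Dpii,\Dp\rangle$, evaluated at the selected parameter $\alpha=\alpha_\ast$, tends to $0$.

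Two facts will do the work. First, as in the proof that $\psi_{\text{HR}}\ge0$, firm non-expansivity \eqref{firmnonexpansivity} applied to the identities $\padii=\prox_{\mathcal{J}}(y^\delta+\pad)-\prox_{\mathcal{J}}(y^\delta)$ and its noise-free analogue yields
\[
\|\padii\|^2\le\alpha\,\psi_{\text{HR}}(\alpha,y^\delta),\qquad\|p^{II}_\alpha\|^2\le\alpha\,\psi_{\text{HR}}(\alpha,y).
\]
Second, the source condition \eqref{source} together with the standard estimates gives $\|\pa\|=\|Ax_\alpha-y\|\le2\|w\|\alpha$ and $\|\pad\|\le\delta+2\|w\|\alpha$, while \eqref{PropResFuncEstimates} gives the extra decay $\|p^{II}_\alpha\|\le\|\pa\|\le2\|w\|\alpha$.

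I would then expand
\[
\langle\Dpii,\Dp\rangle=\langle\padii-p^{II}_\alpha,\ \pad-\pa\rangle=\alpha\,\psi_{\text{HR}}(\alpha,y^\delta)+\alpha\,\psi_{\text{HR}}(\alpha,y)-\langle\padii,\pa\rangle-\langle p^{II}_\alpha,\pad\rangle,
\]
divide by $\alpha$, and estimate the two mixed terms by Cauchy--Schwarz using the bounds above,
\[
\frac1\alpha\,|\langle\padii,\pa\rangle|\le2\|w\|\sqrt\alpha\,\sqrt{\psi_{\text{HR}}(\alpha,y^\delta)},\qquad\frac1\alpha\,|\langle p^{II}_\alpha,\pad\rangle|\le2\|w\|\sqrt\alpha\Bigl(\tfrac{\delta}{\sqrt\alpha}+2\|w\|\sqrt\alpha\Bigr)=2\|w\|\delta+4\|w\|^2\alpha,
\]
so that
\[
\frac1\alpha\langle\Dpii,\Dp\rangle\le\psi_{\text{HR}}(\alpha,y^\delta)+\psi_{\text{HR}}(\alpha,y)+2\|w\|\sqrt\alpha\,\sqrt{\psi_{\text{HR}}(\alpha,y^\delta)}+2\|w\|\delta+4\|w\|^2\alpha.
\]
It then remains to see that $\psi_{\text{HR}}(\alpha_\ast,y^\delta)\to0$ and $\psi_{\text{HR}}(\alpha_\ast,y)\to0$. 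For the former, minimality of $\alpha_\ast$ and the inequality $\psi_{\text{HR}}\le\psi_{\text{HD}}$ give $\psi_{\text{HR}}(\alpha_\ast,y^\delta)\le\psi_{\text{HD}}(\alpha(\delta),y^\delta)\le\bigl(\delta/\sqrt{\alpha(\delta)}+2\|w\|\sqrt{\alpha(\delta)}\bigr)^2$ for any admissible $\alpha(\delta)$, which can be made to vanish by choosing $\alpha(\delta)\to0$ with $\delta^2/\alpha(\delta)\to0$; for the latter, $\psi_{\text{HR}}(\alpha_\ast,y)\le\psi_{\text{HD}}(\alpha_\ast,y)=\|p_{\alpha_\ast}\|^2/\alpha_\ast\le4\|w\|^2\alpha_\ast\to0$. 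Feeding this back through \eqref{noiseconditionhankeraus} and \eqref{bregmantriangle} then finishes the proof.

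The step I expect to be the main obstacle is the mixed term $\tfrac1\alpha\langle p^{II}_\alpha,\pad\rangle$: its factor $\|\pad\|/\sqrt\alpha\sim\delta/\sqrt{\alpha_\ast}$ need not remain bounded along the sequence picked by the rule, so one must observe that the companion factor $\|p^{II}_\alpha\|/\sqrt\alpha$ is not merely bounded but of order $\sqrt\alpha$ --- because the exact-data second Bregman residual inherits the $\mathcal{O}(\alpha)$ smallness of $\|\pa\|$ via \eqref{PropResFuncEstimates} --- whereupon the product collapses to the harmless $\mathcal{O}(\delta)$. The remaining estimates are routine bookkeeping of the kind already done for the HD rule.
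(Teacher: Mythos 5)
Your proposal is correct and follows essentially the same route as the paper: expand $\langle\Dpii,\Dp\rangle$ into the two diagonal terms $\alpha\psi_{\text{HR}}(\alpha,y^\delta)$, $\alpha\psi_{\text{HR}}(\alpha,y)$ plus two cross terms, kill the cross terms by Cauchy--Schwarz with the residual bounds $\|p^{II}_\alpha\|\le\|p_\alpha\|\le2\|w\|\alpha$, and handle the diagonal terms via minimality of $\alpha_\ast$ together with $\psi_{\text{HR}}\le\psi_{\text{HD}}$ and $\alpha_\ast\to0$. The only (immaterial) deviation is that you bound $\|\padii\|$ by $\sqrt{\alpha\psi_{\text{HR}}(\alpha,y^\delta)}$ via firm non-expansivity where the paper simply uses $\|\padii\|\le\|\pad\|$ from \eqref{PropResFuncEstimates}.
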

\begin{proof}
	Let $\alpha_\ast$ be the minimiser of the Hanke-Raus functional. Then, as before, we estimate the Bregman distance as \eqref{bregmantriangle} and from \eqref{noiseconditionhankeraus}, deduce that
	\[
	\begin{aligned}
	D_{\xi_{\alpha_\ast}}(x^\delta_{\alpha_\ast},x_{\alpha_\ast})&\le\frac{1}{\alpha_\ast}\langle \Dpiiast,\Dp_\ast\rangle
	=\frac{1}{\alpha_\ast}\langle p^{II}_{\alpha_\ast},\Dp_\ast\rangle-\frac{1}{\alpha_\ast}\langle p^{II}_{\alpha_\ast,\delta},\Dp_\ast\rangle
	\\
	&=\frac{1}{\alpha_\ast}\langle p^{II}_{\alpha_\ast},p_{\alpha_\ast}\rangle-\frac{1}{\alpha_\ast}\langle p^{II}_{\alpha_\ast},p^\delta_{\alpha_\ast}\rangle+\frac{1}{\alpha_\ast}\langle p^{II}_{\alpha_\ast,\delta},p^\delta_{\alpha_\ast}\rangle-\frac{1}{\alpha_\ast}\langle p^{II}_{\alpha_\ast,\delta},p_{\alpha_\ast}\rangle
	\\
	&\le \psi_{\text{HR}}(\alpha,y^\delta)+\psi_{\text{HR}}(\alpha_\ast,y)-\frac{1}{\alpha_\ast}\langle p^{II}_{\alpha_\ast},p^\delta_{\alpha_\ast}\rangle-\frac{1}{\alpha_\ast}\langle p^{II}_{\alpha_\ast,\delta},p_{\alpha_\ast}\rangle.
	\end{aligned}
	\]
	Now, notice that the last two terms can be estimated as
	\[
	\begin{aligned}
	&-\frac{1}{\alpha_\ast}\langle p^{II}_{\alpha_\ast},p^\delta_{\alpha_\ast}\rangle-\frac{1}{\alpha_\ast}\langle p^{II}_{\alpha_\ast,\delta},p_{\alpha_\ast}\rangle\le\frac{1}{\alpha_\ast}\|p^{II}_{\alpha_\ast}\|\|p^\delta_{\alpha_\ast}\|+\frac{1}{\alpha_\ast}\|p^{II}_{\alpha_\ast,\delta}\|\|p_{\alpha_\ast}\|\le\frac{2}{\alpha_\ast}\|p_{\alpha_\ast}\|\|p^\delta_{\alpha_\ast}\|
	\\
	&\qquad\le 4\|w\|\delta+4\|w\|^2\alpha_\ast\to 0\text{ as }\delta\to 0.
	\end{aligned}
	\]
Moreover,
\[
\psi_{\text{HR}}(\alpha_\ast,y)\le\frac{1}{\alpha_\ast}\|p_{\alpha_\ast}\|^2\le 4\|w\|^2\alpha_\ast\to 0
\]
as $\delta\to 0$, and obviously
\[
\psi_{\text{HR}}(\alpha,y^\delta)\le\frac{\|p^\delta_\alpha\|}{\alpha}\to 0
\]
choosing $\alpha$ such that $\alpha\to 0$ and $\delta^2/\alpha\to 0$ as $\delta\to 0$. The result then follows from the fact that the other terms in \eqref{bregmantriangle} also vanish.
\end{proof}
By Lemma~\ref{Lemma1}, sufficient for  the auto-regularisation 
condition \eqref{funccal} is that 
\begin{align*}
&\langle \Dp,\Dy\rangle-\| \Dp\|^2\le C\langle \Dpii,\Dp\rangle.
\end{align*}
\subsubsection{Convergence rates}
The following  convergence rates theorem for the Hanke-Raus rule 
requires an additional condition, which we, however, will  not investigate 
further as it is beyond the scope of our work. 
\begin{proposition}
	Let the source condition \eqref{source} hold, $\alpha_\ast$ be selected according to the Hanke-Raus rule and supposes the auto-regularisation condition \eqref{noiseconditionhankeraus} is satisfied. Assume, in addition that
	$\|\yd\|\geq C$ for all $\delta$ and suppose that a constant exists with  
	\[ C \leq \left\langle \frac{p^{II}_{\alpha,\delta}}{\alpha},\frac{p^\delta_{\alpha}}{\alpha} \right\rangle, \qquad \forall \alpha \in (0,\alpha_\text{max})\quad\text{and}\quad \|\yd\| \geq C. \]
 Then
	\[
	D_\xi(x^\delta_{\alpha_\ast},x^\dagger)=\mathcal{O}(\delta),
	\]
	for $\delta>0$ sufficiently small.
\end{proposition}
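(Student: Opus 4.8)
The plan is to imitate the convergence rates proof for the HD rule, with the extra positivity hypothesis $C\le\langle\frac{\padii}{\alpha},\frac{\pad}{\alpha}\rangle$ taking over the role played there by Proposition~\ref{propraz}. First I would observe that, multiplied through by $\alpha$, this hypothesis reads
\[
\psi_{\text{HR}}(\alpha,y^\delta)=\frac{1}{\alpha}\langle \padii,\pad\rangle\ge C\alpha\qquad\forall\,\alpha\in(0,\alpha_\text{max}),
\]
valid whenever $\|\yd\|\ge C$. Since $\alpha_\ast$ is the global minimiser of $\psi_{\text{HR}}(\cdot,y^\delta)$, this gives $\alpha_\ast\le C\,\psi_{\text{HR}}(\alpha_\ast,y^\delta)\le C\,\psi_{\text{HR}}(\alpha,y^\delta)$ for every admissible $\alpha$. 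Combining this with $\psi_{\text{HR}}\le\psi_{\text{HD}}$ and the bound $\psi_{\text{HD}}(\alpha,y^\delta)\le\left(\frac{\delta}{\sqrt{\alpha}}+2\|w\|\sqrt{\alpha}\right)^2$ from the proof of Theorem~\ref{th:two}, I would conclude $\alpha_\ast=\mathcal{O}(\frac{\delta^2}{\alpha}+\alpha)$ for any fixed choice $\alpha=\alpha(\delta)$.

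Next I would re-run the chain of estimates from the proof of Theorem~\ref{th:three}. Using \eqref{bregmantriangle}, the source-condition estimates and the auto-regularisation condition \eqref{noiseconditionhankeraus}, the data propagation error at $\alpha_\ast$ is controlled by
\[
D_{\xi_{\alpha_\ast}}(x^\delta_{\alpha_\ast},x_{\alpha_\ast})\le\psi_{\text{HR}}(\alpha,y^\delta)+\psi_{\text{HR}}(\alpha_\ast,y)+\frac{2}{\alpha_\ast}\|p_{\alpha_\ast}\|\,\|p^\delta_{\alpha_\ast}\|,
\]
where the cross terms $-\frac{1}{\alpha_\ast}\langle p^{II}_{\alpha_\ast},p^\delta_{\alpha_\ast}\rangle-\frac{1}{\alpha_\ast}\langle p^{II}_{\alpha_\ast,\delta},p_{\alpha_\ast}\rangle$ have been dominated via Cauchy--Schwarz together with \eqref{PropResFuncEstimates}. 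The source-condition estimates then give $\frac{2}{\alpha_\ast}\|p_{\alpha_\ast}\|\,\|p^\delta_{\alpha_\ast}\|=\mathcal{O}(\delta+\alpha_\ast)$ and $\psi_{\text{HR}}(\alpha_\ast,y)\le\frac{1}{\alpha_\ast}\|p_{\alpha_\ast}\|^2=\mathcal{O}(\alpha_\ast)$, and since $D_\xi(x_{\alpha_\ast},x^\dagger)\le\frac{\|w\|^2}{2}\alpha_\ast$, inequality \eqref{bregmantriangle} yields
\[
D_\xi(x^\delta_{\alpha_\ast},x^\dagger)=\mathcal{O}\!\left(\psi_{\text{HR}}(\alpha,y^\delta)+\alpha_\ast+\delta\right)=\mathcal{O}\!\left(\frac{\delta^2}{\alpha}+\alpha+\delta\right),
\]
using $\psi_{\text{HR}}(\alpha,y^\delta)\le\psi_{\text{HD}}(\alpha,y^\delta)=\mathcal{O}(\frac{\delta^2}{\alpha}+\alpha)$ and the bound on $\alpha_\ast$ from the first step. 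Choosing $\alpha=\alpha(\delta)=\delta$ then gives $D_\xi(x^\delta_{\alpha_\ast},x^\dagger)=\mathcal{O}(\delta)$.

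I do not expect any serious obstacle here, since this is essentially the Hanke--Raus analogue of the HD rates argument. The two points deserving care are: (i) checking that the positivity hypothesis really does deliver the sublinear lower bound $\psi_{\text{HR}}(\alpha,y^\delta)\ge C\alpha$ uniformly in $\alpha$ --- which is immediate after multiplying by $\alpha$ --- so that it can stand in for Proposition~\ref{propraz}; and (ii) keeping track of all the $\alpha_\ast$-terms produced in the second step and confirming each is absorbed into the $\mathcal{O}(\frac{\delta^2}{\alpha}+\alpha)$ bound, which is exactly what the first step provides. As in Proposition~\ref{propraz}, the requirement $\|\yd\|\ge C$ is harmless for small $\delta$ once $y\ne 0$, since then $\|\yd\|\to\|y\|>0$.
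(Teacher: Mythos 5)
Your proposal is correct and follows essentially the same route as the paper: the positivity hypothesis is used exactly to obtain $\alpha_\ast\le C\,\psi_{\text{HR}}(\alpha_\ast,y^\delta)\le C\,\psi_{\text{HR}}(\alpha,y^\delta)$, the data propagation error is then controlled by re-running the estimates from the proof of Theorem~\ref{th:three} together with \eqref{bregmantriangle}, and the choice $\alpha=\delta$ yields the rate. You merely spell out the intermediate cross-term bounds more explicitly than the paper does, which is harmless.
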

\begin{proof}
	Notice that the imposed conditions imply that $\alpha_\ast \leq 
	 C\psi_{\text{HR}}(\alpha_\ast,y^\delta)$. 
	%
	Now,
\begin{align*}
	D_\xi(x^\delta_{\alpha_\ast},x^\dagger)&\le C\frac{1}{\alpha_\ast}\langle \Dpiiast,\Dp_\ast\rangle+C\alpha_\ast+C\delta
	=\mathcal{O}\left(\left(\frac{\delta}{\sqrt{\alpha}}+\sqrt{\alpha} \right)^2+\alpha_\ast+\delta \right)
	\\
	&=\mathcal{O}\left(\left(\frac{\delta}{\sqrt{\alpha}}+\sqrt{\alpha}\right)^2+\delta \right)\qquad\text{since }\alpha_\ast\le C\psi_{\text{HR}}(\alpha_\ast,y^\delta)
	\\
	&=\mathcal{O}\left(\delta^2+\delta\right)\qquad\text{ choosing }\alpha=\delta
	\\
	&=\mathcal{O}\left(\delta\right),
\end{align*}
for $\delta$ sufficiently small.
\end{proof}


\subsection{The quasi-optimality rules}
The principle behind the quasi-optimality rule is to minimise the difference of two successive approximations of the solution. In the linear case, the difference is measured with the norm, but in the convex setting, we use the Bregman distance. Therefore, possibilities for the quasi-optimality rule include choosing $\alpha_\ast$ as the minimiser of $D^{\text{sym}}_{\xi^{II}_{\alpha,\delta},\xi^\delta_\alpha}(x^{II}_{\alpha,\delta},x^\delta_\alpha)$, $D_{\xi^{II}_{\alpha,\delta}}(x^\delta_\alpha,x^{II}_{\alpha,\delta})$ or $D_{\xi^\delta_\alpha}(x^{II}_{\alpha,\delta},x^\delta_\alpha)$.

Similarly as for the Hanke-Raus rule, the authors are not aware of any other analysis of the quasi-optimality rule defined as above. The discrete version was considered in \cite{jinlorenz}. There, the noise condition postulated was a generalisation into the convex setting of the auto-regularisation set of \cite{glasko}, and the numerical performance of the rule with $D_{\xi^{II}_{\alpha,\delta}}(x^\delta_\alpha,x^{II}_{\alpha,\delta})$ was tested in \cite{kindermannmutimbu}. The performance of the latter rule in the aforementioned reference and also in our own numerical experiments proved to be quite poor and therefore we omit it. The rule with the symmetric Bregman distance performs reasonably, on the other hand. The remaining version is generally the best performing of all the quasi-optimality rules. More light will be shed on this, however, in the numerics section.

\subsubsection{The symmetric quasi-optimality rule}
Similarly to the previous two rules discussed, the symmetric quasi-optimality functional may also be expressed in terms of residuals:
\begin{proposition}
	We have that
	\begin{equation*}
	\psi_{SQO}(\alpha,y^\delta)= \frac{1}{\alpha}\langle 
	\pad -\padii,\padii \rangle,
	\end{equation*}
	for all $\alpha\in(0,\alpha_\text{max})$ and $y^\delta\in Y$.
\end{proposition}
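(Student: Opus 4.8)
The plan is to start from the definition of the symmetric Bregman distance and rewrite everything in terms of residuals using the subgradient formulas already established. Recall $\psi_{\text{SQO}}(\alpha,y^\delta)=D^{\text{sym}}_{\xi^{II}_{\alpha,\delta},\xi^\delta_\alpha}(x^{II}_{\alpha,\delta},x^\delta_\alpha)=\langle \xi^{II}_{\alpha,\delta}-\xi^\delta_\alpha, x^{II}_{\alpha,\delta}-x^\delta_\alpha\rangle$. The key inputs are the two displayed formulas for the subgradients: $\xi^\delta_\alpha=-\frac{1}{\alpha}A^\ast(Ax^\delta_\alpha-y^\delta)=\frac{1}{\alpha}A^\ast p^\delta_\alpha$ and $\xi^{II}_{\alpha,\delta}=-\frac{1}{\alpha}A^\ast(Ax^{II}_{\alpha,\delta}-y^\delta-(y^\delta-Ax^\delta_\alpha))=\frac{1}{\alpha}A^\ast(p^{II}_{\alpha,\delta}+p^\delta_\alpha)$, where I use $p^\delta_\alpha=y^\delta-Ax^\delta_\alpha$ and $p^{II}_{\alpha,\delta}=y^\delta-Ax^{II}_{\alpha,\delta}$.

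First I would substitute these into the pairing and move the adjoint across:
\[
\langle \xi^{II}_{\alpha,\delta}-\xi^\delta_\alpha,\, x^{II}_{\alpha,\delta}-x^\delta_\alpha\rangle
=\frac{1}{\alpha}\langle A^\ast\big((p^{II}_{\alpha,\delta}+p^\delta_\alpha)-p^\delta_\alpha\big),\, x^{II}_{\alpha,\delta}-x^\delta_\alpha\rangle
=\frac{1}{\alpha}\langle p^{II}_{\alpha,\delta},\, A(x^{II}_{\alpha,\delta}-x^\delta_\alpha)\rangle.
\]
Then I would rewrite the argument of $A$ in terms of residuals: $A(x^{II}_{\alpha,\delta}-x^\delta_\alpha)=(y^\delta-p^\delta_\alpha)-(y^\delta-p^{II}_{\alpha,\delta})=p^{II}_{\alpha,\delta}-p^\delta_\alpha = -(p^\delta_\alpha-p^{II}_{\alpha,\delta})$. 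Hence the expression becomes $\frac{1}{\alpha}\langle p^{II}_{\alpha,\delta},\, p^{II}_{\alpha,\delta}-p^\delta_\alpha\rangle = \frac{1}{\alpha}\langle p^{II}_{\alpha,\delta},\, -(p^\delta_\alpha-p^{II}_{\alpha,\delta})\rangle = \frac{1}{\alpha}\langle p^\delta_\alpha-p^{II}_{\alpha,\delta},\, p^{II}_{\alpha,\delta}\rangle$, which is exactly the claimed form $\frac{1}{\alpha}\langle \pad-\padii,\padii\rangle$ (noting the sign: $\langle\xi_1-\xi_2,x_1-x_2\rangle$ is symmetric under swapping the two arguments simultaneously, so one must be a little careful that the sign works out, but since $\langle p^{II}_{\alpha,\delta},p^{II}_{\alpha,\delta}-p^\delta_\alpha\rangle=\langle p^{II}_{\alpha,\delta}-p^\delta_\alpha,p^{II}_{\alpha,\delta}\rangle$ trivially it does).

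There is essentially no hard part here; this is a routine computation, and it parallels the analogous residual reformulations already carried out for $\psi_{\text{HD}}$ and $\psi_{\text{HR}}$ earlier in the paper. The only point requiring a little care is correctly invoking the optimality condition for the second Bregman iterate to justify the formula for $\xi^{II}_{\alpha,\delta}$ as an \emph{element} of $\partial\mathcal{R}(x^{II}_{\alpha,\delta})$ (so that the symmetric Bregman distance is well-defined with these particular subgradients), and then ensuring the algebraic signs in the telescoping $A(x^{II}_{\alpha,\delta}-x^\delta_\alpha)=p^{II}_{\alpha,\delta}-p^\delta_\alpha$ are tracked consistently. Once that is in place, the identity drops out immediately from moving $A^\ast$ onto the other factor of the inner product.
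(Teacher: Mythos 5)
Your proposal is correct in substance and follows exactly the paper's route: substitute the subgradient formulas to get $\xi^{II}_{\alpha,\delta}-\xi^\delta_\alpha=\frac{1}{\alpha}A^\ast\padii$, move $A^\ast$ across the pairing, and express $A(x^{II}_{\alpha,\delta}-x^\delta_\alpha)$ in terms of the residuals. Be aware, though, that your writeup contains two compensating sign slips: the line $A(x^{II}_{\alpha,\delta}-x^\delta_\alpha)=(y^\delta-\pad)-(y^\delta-\padii)$ attaches the wrong residual to each iterate (the correct identity is $(y^\delta-\padii)-(y^\delta-\pad)=\pad-\padii$), and the subsequent step $\langle\padii,-(\pad-\padii)\rangle=\langle\pad-\padii,\padii\rangle$ silently drops a minus sign; the two errors cancel so the final formula is right, but each intermediate equality as written is false and should be corrected.
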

\begin{proof}
	We have
	\begin{equation}
	\begin{aligned}
	&D^{\text{sym}}_{\xi^{II}_{\alpha,\delta},\xi^\delta_\alpha}(x^{II}_{\alpha,\delta},x^\delta_\alpha)=\langle\xi^{II}_{\alpha,\delta}-\xi^\delta_\alpha,x^{II}_{\alpha,\delta}-x^\delta_\alpha\rangle
	\\
	&\qquad =\frac{1}{\alpha}\langle A^\ast(Ax^\delta_\alpha-y^\delta)-A^\ast(Ax^{II}_{\alpha,\delta}-y^\delta+Ax^\delta_\alpha-y^\delta),x^{II}_{\alpha,\delta}-x^\delta_\alpha\rangle
	\\
	&\qquad =\frac{1}{\alpha}\langle A(x^\delta_\alpha-x^{II}_{\alpha,\delta}),Ax^{II}_{\alpha,\delta}-y^\delta\rangle
	=\frac{1}{\alpha}\langle Ax^\delta_\alpha-y^\delta-(Ax^{II}_{\alpha,\delta}-y^\delta),Ax^{II}_{\alpha,\delta}-y^\delta\rangle,
	\label{iteratedquasiopt}
	\end{aligned}
	\end{equation}
	for all $\alpha\in(0,\alpha_\text{max})$ and $y^\delta\in Y$, which is what we wanted to show.
\end{proof}

\begin{proposition}
	We have
	\[
	\psi_{\text{SQO}}(\alpha,y^\delta)\le\psi_{\text{HR}}(\alpha,y^\delta),
	\]
	for all $\alpha\in(0,\alpha_\text{max})$ and $y^\delta\in Y$.
\end{proposition}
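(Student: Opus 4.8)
The plan is to subtract the two functionals directly, using the residual representation of $\psi_{\text{SQO}}$ established in the preceding proposition together with the residual form of $\psi_{\text{HR}}$ recorded at the start of this subsection. By that proposition,
\[
\psi_{\text{SQO}}(\alpha,y^\delta)=\frac{1}{\alpha}\langle \pad-\padii,\padii\rangle=\frac{1}{\alpha}\left(\langle\pad,\padii\rangle-\|\padii\|^2\right),
\]
while $\psi_{\text{HR}}(\alpha,y^\delta)=\frac{1}{\alpha}\langle\padii,\pad\rangle$. The first step is simply to form the difference $\psi_{\text{HR}}(\alpha,y^\delta)-\psi_{\text{SQO}}(\alpha,y^\delta)$ and expand.

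Since $Y$ is a (real) Hilbert space, the inner product is symmetric, so $\langle\padii,\pad\rangle=\langle\pad,\padii\rangle$; the two cross terms cancel and one is left with
\[
\psi_{\text{HR}}(\alpha,y^\delta)-\psi_{\text{SQO}}(\alpha,y^\delta)=\frac{1}{\alpha}\|\padii\|^2\ge 0,
\]
using $\alpha\in(0,\alpha_\text{max})$. Since $\padii=\yd-Ax^{II}_{\alpha,\delta}$ is well defined (e.g.\ via the proximal representation given earlier), this is valid for all $\alpha\in(0,\alpha_\text{max})$ and $y^\delta\in Y$, which is exactly the asserted inequality.

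There is essentially no obstacle here: unlike the comparison $\psi_{\text{HR}}\le\psi_{\text{HD}}$, no appeal to firm nonexpansivity \eqref{firmnonexpansivity} or to the source condition is required — the only ingredient beyond the two residual identities is the symmetry of the inner product on $Y$, which is part of the standing assumptions. The sole point to keep straight is the bookkeeping of which residual ($\pad$ versus $\padii$) multiplies which, so that the square term that survives is $\|\padii\|^2$ and not something indefinite.
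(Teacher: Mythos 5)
Your proof is correct and is essentially the paper's own argument: both expand $\psi_{\text{SQO}}(\alpha,y^\delta)=\frac{1}{\alpha}\langle\pad,\padii\rangle-\frac{1}{\alpha}\|\padii\|^2$ and observe that dropping the nonpositive term $-\frac{1}{\alpha}\|\padii\|^2$ yields $\psi_{\text{HR}}(\alpha,y^\delta)$. No further comment is needed.
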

\begin{proof}
	It follows trivially from the observation that
	\[
	\psi_{\text{SQO}}(\alpha,y^\delta)=\frac{1}{\alpha}\langle p^\delta_\alpha-p^{II}_{\alpha,\delta},p^{II}_{\alpha,\delta}\rangle=\frac{1}{\alpha}\langle p^\delta_\alpha,p^{II}_{\alpha,\delta}\rangle-\frac{1}{\alpha}\|p^{II}_{\alpha,\delta}\|^2\le\psi_{\text{HR}}(\alpha,y^\delta),
	\]
	for all $\alpha\in(0,\alpha_\text{max})$ and $y^\delta\in Y$.
\end{proof}

\begin{proposition}
	Let $\alpha_\ast$ be the minimiser of the symmetric quasi-optimality functional.
	Then $\alpha_\ast\to 0$ as $\delta\to 0$.
\end{proposition}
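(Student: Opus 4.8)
The plan is to argue exactly as for the heuristic discrepancy and Hanke--Raus rules. Since $\psi_{\text{SQO}}(\alpha,y^\delta)\le\psi_{\text{HR}}(\alpha,y^\delta)\le\psi_{\text{HD}}(\alpha,y^\delta)$ (the two preceding propositions) and $\psi_{\text{SQO}}\ge 0$ (being a symmetric Bregman distance, it is nonnegative by monotonicity of $\partial\mathcal{R}$), and since $\alpha_\ast$ globally minimises $\psi_{\text{SQO}}(\cdot,y^\delta)$, for any a priori choice $\alpha=\alpha(\delta)$ with $\alpha(\delta)\to 0$ and $\delta^2/\alpha(\delta)\to 0$ one has
\[
0\le\psi_{\text{SQO}}(\alpha_\ast,y^\delta)\le\psi_{\text{SQO}}(\alpha(\delta),y^\delta)\le\psi_{\text{HD}}(\alpha(\delta),y^\delta)=\frac{\|p^\delta_{\alpha(\delta)}\|^2}{\alpha(\delta)}\longrightarrow 0,
\]
where the last limit is obtained as in the proofs for the HD and HR rules (e.g.\ via $\|p^\delta_\alpha\|\le\delta+2\|w\|\alpha$ under the source condition \ref{source}, or simply from the standard a priori convergence of Tikhonov regularisation). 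Hence $\psi_{\text{SQO}}(\alpha_\ast,y^\delta)\to 0$ as $\delta\to 0$.

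I would then argue by contradiction. If $\alpha_\ast\not\to 0$, there is a sequence $\delta_k\to 0$ with $\alpha_\ast(\delta_k)\ge c>0$; since $\alpha_\ast(\delta_k)\in(0,\alpha_\text{max})$ is bounded, after extracting a subsequence we may assume $\alpha_\ast(\delta_k)\to\bar\alpha\in(0,\alpha_\text{max}]$ and $y^{\delta_k}\to y$. Using the joint continuity (stability) of the Tikhonov minimiser $x^\delta_\alpha$ and of the second Bregman iterate $x^{II}_{\alpha,\delta}$ in $(\alpha,y^\delta)$ — the latter being itself the minimiser of a Tikhonov-type functional, cf.\ the proximal representation \ref{pr} and the subsequent formula for $p^{II}_{\alpha,\delta}$ — the residuals $p^{\delta_k}_{\alpha_\ast(\delta_k)}$ and $p^{II}_{\alpha_\ast(\delta_k),\delta_k}$ converge to $p_{\bar\alpha}$ and $p^{II}_{\bar\alpha}$, so $\psi_{\text{SQO}}(\alpha_\ast(\delta_k),y^{\delta_k})\to\psi_{\text{SQO}}(\bar\alpha,y)$. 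Combining with the first step gives $\psi_{\text{SQO}}(\bar\alpha,y)=0$, i.e.\ $D^{\text{sym}}_{\xi^{II}_{\bar\alpha},\xi_{\bar\alpha}}(x^{II}_{\bar\alpha},x_{\bar\alpha})=0$, equivalently $\langle p_{\bar\alpha}-p^{II}_{\bar\alpha},p^{II}_{\bar\alpha}\rangle=0$.

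It then remains to turn this into a contradiction. Since the symmetric Bregman distance is the sum of the two nonnegative Bregman distances $D_{\xi^{II}_{\bar\alpha}}(x_{\bar\alpha},x^{II}_{\bar\alpha})$ and $D_{\xi_{\bar\alpha}}(x^{II}_{\bar\alpha},x_{\bar\alpha})$, both vanish; moreover, applying the firm non-expansivity \ref{firmnonexpansivity} of $\prox_{\mathcal{J}}$ to $p_{\bar\alpha}=\prox_{\mathcal{J}}(y)$ and $p^{II}_{\bar\alpha}=\prox_{\mathcal{J}}(y+p_{\bar\alpha})-p_{\bar\alpha}$, the identity $\langle p^{II}_{\bar\alpha},p_{\bar\alpha}\rangle=\|p^{II}_{\bar\alpha}\|^2$ puts us in the equality case of firm non-expansivity. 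Exploiting this equality case together with the proximal characterisation, the aim would be to conclude that the Bregman step does not reduce the exact residual at $\bar\alpha$, forcing $p_{\bar\alpha}=0$, i.e.\ $Ax_{\bar\alpha}=y$; a contradiction is then reached exactly as for the heuristic discrepancy rule, whose corresponding statement is proved in \cite{jinlorenz}.

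The step I expect to be the main obstacle is this last one. For the HD rule the analogous limiting identity is simply $\|p_{\bar\alpha}\|^2/\bar\alpha=0$, which gives $p_{\bar\alpha}=0$ at once; here $\psi_{\text{SQO}}(\bar\alpha,y)$ is a symmetric Bregman distance rather than a squared residual norm, so its vanishing at a positive parameter carries no direct information on $\|p_{\bar\alpha}\|$ and has to be converted into one through the fine (equality-case) properties of $\prox_{\mathcal{J}}$ and the structure of Bregman iteration. A secondary, routine point is the joint continuity of $x^{II}_{\alpha,\delta}$ in $(\alpha,y^\delta)$, which needs a short stability argument in addition to the classical stability of $x^\delta_\alpha$.
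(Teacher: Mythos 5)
Your first step---$0\le\psi_{\text{SQO}}(\alpha_\ast,y^\delta)\le\psi_{\text{SQO}}(\alpha(\delta),y^\delta)\le\psi_{\text{HD}}(\alpha(\delta),y^\delta)\to 0$ for an a-priori choice $\alpha(\delta)$---is exactly the reduction the paper intends: its entire proof consists of the chain $\psi_{\text{SQO}}\le\psi_{\text{HR}}\le\psi_{\text{HD}}$ followed by the assertion that the rest is identical to the Jin--Lorenz argument for the heuristic discrepancy rule. Where you diverge is in actually trying to carry that argument out, and you have correctly located the point at which it is \emph{not} identical: for the HD rule one has $\|p^\delta_{\alpha_\ast}\|^2\le\alpha_{\text{max}}\,\psi_{\text{HD}}(\alpha_\ast,y^\delta)\to 0$, so the residual itself vanishes and $Ax_{\bar\alpha}=y$ follows in the limit, whereas for the SQO rule the inequality $\psi_{\text{SQO}}\le\psi_{\text{HD}}$ points the wrong way, and all one obtains in the limit is $\langle p_{\bar\alpha}-p^{II}_{\bar\alpha},p^{II}_{\bar\alpha}\rangle=0$, i.e.\ the equality case of \ref{firmnonexpansivity}, which by itself says nothing about $\|p_{\bar\alpha}\|$. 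Your proposal stops at ``the aim would be to conclude that \dots\ $p_{\bar\alpha}=0$'', so as written it is incomplete; and the particular route you suggest (forcing $p_{\bar\alpha}=0$ from the equality case of firm non-expansivity alone) cannot work in general, since for non-strictly-convex $\mathcal{J}$ (e.g.\ projections) that equality case is attained on a large set.

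The gap is closable, but through the Bregman-distance form of the functional rather than the residual form. By \ref{iteratedquasiopt}, $\psi_{\text{SQO}}(\bar\alpha,y)=0$ means $\langle\xi^{II}_{\bar\alpha}-\xi_{\bar\alpha},x^{II}_{\bar\alpha}-x_{\bar\alpha}\rangle=0$; since this splits into two nonnegative Bregman distances, strict convexity of $\mathcal{R}$ (e.g.\ $\tfrac1q\|\cdot\|^q_{\ell^q}$, $q>1$) forces $x^{II}_{\bar\alpha}=x_{\bar\alpha}$, and then subtracting the two optimality conditions gives $\xi^{II}_{\bar\alpha}=2\xi_{\bar\alpha}$, hence (when $\partial\mathcal{R}$ is single-valued at $x_{\bar\alpha}$) $\xi_{\bar\alpha}=0$ and $A^\ast(Ax_{\bar\alpha}-y)=0$, i.e.\ $0\in\partial\mathcal{R}(x_{\bar\alpha})$---which is the nondegeneracy contradiction the Jin--Lorenz lemma trades on. You should also be aware that the difficulty you flag is not addressed by the paper either: its one-line proof (and likewise its proof for the Hanke--Raus rule, where $\psi_{\text{HR}}(\alpha_\ast,y^\delta)\to0$ is deduced but $\psi_{\text{HR}}$ is only an \emph{upper}-bounded quantity, not a lower bound for the residual) silently assumes that vanishing of the minimised functional controls $\|p^\delta_{\alpha_\ast}\|$. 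So: genuine gap in your write-up, but your diagnosis of where the cited argument breaks is more informative than the paper's citation.
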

\begin{proof}
	Since $\psi_{\text{SQO}}(\alpha_\ast,y^\delta)\le\psi_{\text{HR}}(\alpha_\ast,y^\delta)\le\psi_{\text{HD}}(\alpha_\ast,y^\delta)$, the proof that $\alpha_\ast\to 0$ as $\delta\to 0$ is identical to the one given in \cite{jinlorenz} for the heuristic discrepancy rule.
\end{proof}
Similar as above, an auto-regularisation condition leads to convergence:
\begin{theorem}\label{th:four}
	Let the source condition \eqref{source} be satisfied and, $\alpha_\ast$ be the minimiser of $\psi_{\text{SQO}}(\alpha,y^\delta)$ and suppose that
	\begin{equation}
	D_{\xi_\alpha}(x^\delta_\alpha,x_\alpha)\le\frac{1}{\alpha}\langle \Dp-\Dpii,\Dpii \rangle,
	\label{noiseconditionquasiopt}
	\end{equation}
	for all $\alpha\in(0,\alpha_\text{max})$ and $y,y^\delta\in Y$.
	
	Then
	\begin{equation*}
	D_\xi(x^{\delta}_{\alpha_\ast},x^\dagger)\to 0\text{ as }\delta\to 0.
	\end{equation*}
\end{theorem}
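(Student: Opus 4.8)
The plan is to follow exactly the template used for Theorems~\ref{th:two} and \ref{th:three}: estimate the total Bregman error via the triangle-type inequality \eqref{bregmantriangle}, bound the data propagation term using the auto-regularisation hypothesis \eqref{noiseconditionquasiopt}, and then show each resulting term vanishes as $\delta\to 0$. The already-established fact $\alpha_\ast\to 0$ (the preceding proposition) handles the approximation error $D_\xi(x_{\alpha_\ast},x^\dagger)\le\frac{\|w\|^2}{2}\alpha_\ast\to 0$ and the cross term $6\|w\|\delta\to 0$ in \eqref{bregmantriangle} for free, so the real work is the data propagation term $D_{\xi_{\alpha_\ast}}(x^\delta_{\alpha_\ast},x_{\alpha_\ast})$.

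First I would invoke \eqref{noiseconditionquasiopt} at $\alpha=\alpha_\ast$ to get
\[
D_{\xi_{\alpha_\ast}}(x^\delta_{\alpha_\ast},x_{\alpha_\ast})\le\frac{1}{\alpha_\ast}\langle \Dp_\ast-\Dpiiast,\Dpiiast\rangle.
\]
Then I would expand $\Dp_\ast=\pad[\ast]-p_{\alpha_\ast}$ and $\Dpiiast=\padii[\ast]-p^{II}_{\alpha_\ast}$ (writing these in the residual notation introduced in the excerpt) and regroup the bilinear form into four inner products, so that two of them assemble into $\psi_{\text{SQO}}(\alpha_\ast,y^\delta)+\psi_{\text{SQO}}(\alpha_\ast,y)$ using the residual formula $\psi_{\text{SQO}}(\alpha,y^\delta)=\frac{1}{\alpha}\langle p^\delta_\alpha-p^{II}_{\alpha,\delta},p^{II}_{\alpha,\delta}\rangle$ from the preceding proposition, while the remaining two are genuine cross terms of the form $\pm\frac{1}{\alpha_\ast}\langle p^{II}_{\alpha_\ast},p^\delta_{\alpha_\ast}\rangle$-type expressions. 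This mirrors the bookkeeping in the proof of Theorem~\ref{th:three} almost verbatim, except one must track the second-iterate residuals in place of the first ones.

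Next, each piece is driven to zero. The cross terms are bounded in absolute value by $\frac{C}{\alpha_\ast}\|p_{\alpha_\ast}\|\,\|p^\delta_{\alpha_\ast}\|$ after applying Cauchy--Schwarz together with the monotonicity estimate $\|p^{II}_{\alpha,\delta}\|=\|Ax^{II}_{\alpha,\delta}-y^\delta\|\le\|Ax^\delta_\alpha-y^\delta\|=\|p^\delta_\alpha\|$ from \eqref{PropResFuncEstimates} (and its exact-data analogue); then the source-condition estimates $\|p^\delta_{\alpha_\ast}\|\le\delta+2\|w\|\alpha_\ast$ and $\|p_{\alpha_\ast}\|\le 2\|w\|\alpha_\ast$ give a bound like $4\|w\|\delta+4\|w\|^2\alpha_\ast\to 0$. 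For the term $\psi_{\text{SQO}}(\alpha_\ast,y^\delta)$, I use that $\alpha_\ast$ is the global minimiser, so $\psi_{\text{SQO}}(\alpha_\ast,y^\delta)\le\psi_{\text{SQO}}(\alpha,y^\delta)\le\psi_{\text{HD}}(\alpha,y^\delta)=\frac{\|p^\delta_\alpha\|^2}{\alpha}\le\big(\tfrac{\delta}{\sqrt{\alpha}}+2\|w\|\sqrt{\alpha}\big)^2$ for a comparison parameter $\alpha=\alpha(\delta)$ chosen with $\alpha(\delta)\to 0$ and $\delta^2/\alpha(\delta)\to 0$. For $\psi_{\text{SQO}}(\alpha_\ast,y)$, the exact-data chain $\psi_{\text{SQO}}(\alpha_\ast,y)\le\psi_{\text{HD}}(\alpha_\ast,y)=\frac{\|p_{\alpha_\ast}\|^2}{\alpha_\ast}\le 4\|w\|^2\alpha_\ast\to 0$ closes it. Plugging everything into \eqref{bregmantriangle} yields $D_\xi(x^\delta_{\alpha_\ast},x^\dagger)\to 0$.

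The main obstacle — really the only subtlety — is the sign handling in the regrouping step: because $\psi_{\text{SQO}}$ is a non-symmetric bilinear expression in the residuals rather than a norm, one must be careful that the four expanded inner products actually reassemble into the stated $\psi_{\text{SQO}}$ quantities plus controllable cross terms, rather than something with an unfavourable sign that cannot be absorbed. I expect the earlier relation $\psi_{\text{SQO}}\le\psi_{\text{HR}}\le\psi_{\text{HD}}$ to be the safety net: even if a term resists direct identification with a $\psi_{\text{SQO}}$ value, one can dominate it by the corresponding $\psi_{\text{HD}}$ value and proceed as in Theorem~\ref{th:two}. Everything else is a routine repetition of the $\delta\to 0$ limiting arguments already carried out for the HD and HR rules.
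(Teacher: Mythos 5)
Your proposal is correct and follows essentially the same route as the paper's proof: reduce via \eqref{bregmantriangle}, apply \eqref{noiseconditionquasiopt} at $\alpha_\ast$, expand the bilinear form into $\psi_{\text{SQO}}(\alpha_\ast,y^\delta)+\psi_{\text{SQO}}(\alpha_\ast,y)$ plus two cross terms, bound the cross terms by $\tfrac{C}{\alpha_\ast}\|p_{\alpha_\ast}\|\|p^\delta_{\alpha_\ast}\|$ using \eqref{PropResFuncEstimates} and Cauchy--Schwarz, and handle the $\psi_{\text{SQO}}$ terms via minimality and the chain $\psi_{\text{SQO}}\le\psi_{\text{HD}}$ with a comparison parameter $\alpha(\delta)$. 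The regrouping concern you flag is resolved exactly as you anticipate, so no further work is needed.
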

\begin{proof}
	From \eqref{bregmantriangle} and \eqref{noiseconditionquasiopt},  it remains to estimate
	\[
	\begin{aligned}
	&D_{\xi_{\alpha_\ast}}(x^\delta_{\alpha_\ast},x_{\alpha_\ast})\\
	&\quad \le\frac{1}{\alpha_\ast}\langle p^\delta_{\alpha_\ast}-p^{II}_{\alpha_\ast,\delta},p^{II}_{\alpha_\ast,\delta}\rangle+\frac{1}{\alpha_\ast}\langle p_{\alpha_\ast}-p^{II}_{\alpha_\ast},p^{II}_{\alpha_\ast}\rangle-\frac{1}{\alpha_\ast}\langle p_{\alpha_\ast}-p^{II}_{\alpha_\ast},p^{II}_{\alpha_\ast,\delta} \rangle-\frac{1}{\alpha_\ast}\langle p^\delta_{\alpha_\ast}-p^{II}_{\alpha_\ast,\delta},p^{II}_{\alpha_\ast}\rangle
	\\
	&\quad \le\psi_{\text{SQO}}(\alpha,y^\delta)+\psi_{\text{SQO}}(\alpha_\ast,y)-\frac{1}{\alpha_\ast}\langle p_{\alpha_\ast}-p^{II}_{\alpha_\ast},p^{II}_{\alpha_\ast,\delta} \rangle-\frac{1}{\alpha_\ast}\langle p^\delta_{\alpha_\ast}-p^{II}_{\alpha_\ast,\delta},p^{II}_{\alpha_\ast}\rangle.
	\end{aligned}
	\]
	Now, the last two \textquotedblleft remainder\textquotedblright terms can be estimated from above by
	\[
	\begin{aligned}
	&\frac{1}{\alpha_\ast}\langle p^\delta_{\alpha_\ast},p^{II}_{\alpha_\ast} \rangle+\frac{1}{\alpha_\ast}\langle p^{II}_{\alpha_\ast,\delta},p^{II}_{\alpha_\ast} \rangle-\frac{1}{\alpha_\ast}\langle p_{\alpha_\ast},p^{II}_{\alpha_\ast,\delta}\rangle +\frac{1}{\alpha_\ast}\langle p^{II}_{\alpha_\ast}, p^{II}_{\alpha_\ast,\delta} \rangle
	\\
	&\le\frac{1}{\alpha_\ast}\|p^\delta_{\alpha_\ast}\|\|p^{II}_{\alpha_\ast}\|+\frac{1}{\alpha_\ast}\|p^{II}_{\alpha_\ast,\delta}\|\|p^{II}_{\alpha_\ast}\|+\frac{1}{\alpha_\ast}\|p_{\alpha_\ast}\|\|p^{II}_{\alpha_\ast,\delta}\|+\frac{1}{\alpha_\ast}\|p^{II}_{\alpha_\ast}\|\|p^{II}_{\alpha_\ast,\delta}\|
	\\
	&\le \frac{4}{\alpha_\ast}\|p^\delta_{\alpha_\ast}\|\|p_{\alpha_\ast}\|\le 8\|w\|\delta+8\|w\|^2\alpha_\ast\xrightarrow{\delta\to 0}0.
	\end{aligned}
	\]
	Moreover,
	\[
	\begin{aligned}
	\psi_{\text{SQO}}(\alpha_\ast,y)&=\frac{1}{\alpha_\ast}\langle p_{\alpha_\ast},p^{II}_{\alpha_\ast} \rangle-\frac{1}{\alpha_\ast}\|p^{II}_{\alpha_\ast}\|^2\le\frac{1}{\alpha_\ast}\|p_{\alpha_\ast}\|\|p^{II}_{\alpha_\ast}\|\le\frac{\|p_{\alpha_\ast}\|^2}{\alpha_\ast}=4\|w\|^2\alpha_\ast\xrightarrow{\delta\to 0}0,
	\end{aligned}
	\]
	and
	\[
	\begin{aligned}
	\psi_{\text{SQO}}(\alpha,y^\delta)&\le\frac{\|p^\delta_\alpha\|^2}{\alpha}\to 0,
	\end{aligned}
	\]
	as $\delta\to 0$ for $\alpha$ chosen appropriately as before. The result then follows.
\end{proof}

\subsubsection{Convergence rates}
For completeness, we provide convergence rates results:
\begin{proposition}
	Let the source condition \eqref{source} hold, $\alpha_\ast$ be selected according to the symmetric quasi-optimality rule and suppose the auto-regularisation condition \eqref{noiseconditionquasiopt} is satisfied. Assume, 
	that $\|\yd\| \geq C$ for all $\delta$ sufficiently small and 
	in addition, that
	\begin{equation}\label{aagx} C \leq \left\langle \frac{\pad}{\alpha} -  
	\frac{\padii}{\alpha} , 	\frac{\padii}{\alpha}\right\rangle\qquad 
	\forall \alpha \in (0,\alpha_\text{max})\qquad\text{and}\qquad \|\yd\| \geq C\end{equation}
	Then
	\[
	D_{\xi}(x^\delta_\alpha,x^\dagger)=\mathcal{O}(\delta),
	\]
	for $\delta>0$ sufficiently small.
\end{proposition}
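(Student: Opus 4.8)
The plan is to mirror the convergence-rates arguments already carried out for the HD and HR rules, the only new point being that the extra hypothesis \eqref{aagx} plays the role that Proposition~\ref{propraz} played for HD. Indeed, since $\psi_{\text{SQO}}(\alpha,y^\delta)=\frac1\alpha\langle\pad-\padii,\padii\rangle=\alpha\,\big\langle\tfrac{\pad}{\alpha}-\tfrac{\padii}{\alpha},\tfrac{\padii}{\alpha}\big\rangle$, the assumed lower bound in \eqref{aagx} (which holds for all $\alpha\in(0,\alpha_\text{max})$ as soon as $\|\yd\|\ge C$) yields $\alpha\le C\,\psi_{\text{SQO}}(\alpha,y^\delta)$ for every admissible $\alpha$. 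Evaluating this at $\alpha_\ast$ and using that $\alpha_\ast$ is the global minimiser of $\psi_{\text{SQO}}(\cdot,y^\delta)$ gives the crucial inequality $\alpha_\ast\le C\,\psi_{\text{SQO}}(\alpha_\ast,y^\delta)\le C\,\psi_{\text{SQO}}(\alpha,y^\delta)$ for an arbitrary comparison parameter $\alpha$.

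Next I would start from \eqref{bregmantriangle}, $D_\xi(x^\delta_{\alpha_\ast},x^\dagger)\le D_{\xi_{\alpha_\ast}}(x^\delta_{\alpha_\ast},x_{\alpha_\ast})+\tfrac{\|w\|^2}{2}\alpha_\ast+6\|w\|\delta$, and bound the data-propagation term via the auto-regularisation condition \eqref{noiseconditionquasiopt} by $\tfrac{1}{\alpha_\ast}\langle\Dp_\ast-\Dpiiast,\Dpiiast\rangle$. This is exactly the quantity already estimated inside the proof of Theorem~\ref{th:four}: expanding $\Dp_\ast=p^\delta_{\alpha_\ast}-p_{\alpha_\ast}$ and $\Dpiiast=p^{II}_{\alpha_\ast,\delta}-p^{II}_{\alpha_\ast}$ and regrouping, it is dominated by $\psi_{\text{SQO}}(\alpha,y^\delta)+\psi_{\text{SQO}}(\alpha_\ast,y)$ (again using minimality in the first summand) plus cross terms controlled by $\tfrac{4}{\alpha_\ast}\|p^\delta_{\alpha_\ast}\|\,\|p_{\alpha_\ast}\|\le 8\|w\|\delta+8\|w\|^2\alpha_\ast$ through the standard residual estimates $\|p^\delta_{\alpha_\ast}\|\le\delta+2\|w\|\alpha_\ast$, $\|p_{\alpha_\ast}\|\le 2\|w\|\alpha_\ast$ and the monotonicity $\|p^{II}\|\le\|p\|$ from \eqref{PropResFuncEstimates}. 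Using in addition $\psi_{\text{SQO}}(\alpha_\ast,y)\le\tfrac{\|p_{\alpha_\ast}\|^2}{\alpha_\ast}\le 4\|w\|^2\alpha_\ast$ and $\psi_{\text{SQO}}(\alpha,y^\delta)\le\tfrac{\|p^\delta_\alpha\|^2}{\alpha}\le\big(\tfrac{\delta}{\sqrt\alpha}+2\|w\|\sqrt\alpha\big)^2$, one arrives at $D_\xi(x^\delta_{\alpha_\ast},x^\dagger)=\mathcal{O}\!\big((\tfrac{\delta}{\sqrt\alpha}+\sqrt\alpha)^2+\alpha_\ast+\delta\big)$.

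Finally I would insert the bound from the first step, $\alpha_\ast\le C\,\psi_{\text{SQO}}(\alpha,y^\delta)=\mathcal{O}\!\big(\tfrac{\delta^2}{\alpha}+\alpha\big)$, obtaining $D_\xi(x^\delta_{\alpha_\ast},x^\dagger)=\mathcal{O}\!\big(\tfrac{\delta^2}{\alpha}+\alpha+\delta\big)$ for every admissible $\alpha$, and then choose $\alpha=\alpha(\delta)=\delta$, which gives the claimed rate $\mathcal{O}(\delta)$ for $\delta$ sufficiently small (small enough that $\alpha(\delta)<\alpha_\text{max}$ and $\|\yd\|\ge C$). The only genuinely new ingredient compared with the HD and HR proofs is the deduction, in the first step, that \eqref{aagx} forces $\alpha_\ast\le C\,\psi_{\text{SQO}}(\alpha_\ast,y^\delta)$; after that the argument is a routine repetition. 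Accordingly, I expect the only (minor) obstacle to be bookkeeping: checking that the cross terms produced by expanding $\langle\Dp_\ast-\Dpiiast,\Dpiiast\rangle$ are precisely those already handled in the proof of Theorem~\ref{th:four}, so that no new estimate beyond the residual bounds above is needed, rather than any conceptual difficulty.
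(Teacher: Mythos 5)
Your proposal is correct and follows essentially the same route as the paper's own proof: the key observation that \eqref{aagx} rescales to $\alpha\le C\,\psi_{\text{SQO}}(\alpha,y^\delta)$ (hence $\alpha_\ast\le C\,\psi_{\text{SQO}}(\alpha_\ast,y^\delta)\le C\,\psi_{\text{SQO}}(\alpha,y^\delta)$) is exactly the ingredient the paper invokes, and the remaining estimates are the same reuse of \eqref{bregmantriangle}, \eqref{noiseconditionquasiopt} and the decomposition from Theorem~\ref{th:four}, followed by the choice $\alpha=\delta$. Your write-up is in fact more explicit about the cross-term bookkeeping than the paper's terse version.
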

\begin{proof}
	We have
	\begin{align*}
	D_\xi(x^\delta_{\alpha_\ast},x^\dagger)&\le C\frac{1}{\alpha_\ast}\langle \Dp_\ast-\Dpiiast,\Dpiiast\rangle+C\delta+C\alpha_\ast
	\\
	&=\mathcal{O}\left(\psi_{\text{SQO}}(\alpha,y^\delta)+\psi_{\text{SQO}}(\alpha_\ast,y)+\delta+\alpha_\ast \right)
	\\
	&=\mathcal{O}\left(\left(\frac{\delta}{\sqrt{\alpha}}+C\sqrt{\alpha}\right)^2+\delta+\alpha_\ast\right)=\mathcal{O}\left(\delta^2+\delta+\alpha_\ast\right)=\mathcal{O}(\delta),
	\end{align*}
	where we used that $\alpha_\ast\le\psi_{\text{SQO}}(\alpha_\ast,y^\delta)$, 
	which follows from \eqref{aagx}.
\end{proof}

\subsubsection{The right quasi-optimality rule}
As the expression for the right quasi-optimality functional contains the cumbersome $\mathcal{R}$-functional terms, estimates in which they do not appear may be of utility:
\begin{proposition}
    There exists a positive constant $C$ such that
    \[
    D_{\xi^\delta_\alpha}(x^{II}_{\alpha,\delta},x^\delta_\alpha)\le C\psi_{\text{HD}}(\alpha,y^\delta),
    \]
    for all $\alpha\in(0,\alpha_\text{max})$ and $y^\delta\in Y$.
\end{proposition}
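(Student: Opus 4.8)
The plan is to exploit the variational characterisation of the second Bregman iterate in \eqref{bregmaniterate} together with a simple competitor argument. Recall that $\psi_{\text{HD}}(\alpha,y^\delta)=\frac{1}{\alpha}\|\pad\|^2$, so the claim amounts to bounding $D_{\xi^\delta_\alpha}(x^{II}_{\alpha,\delta},x^\delta_\alpha)$ by $\frac{C}{\alpha}\|\pad\|^2$, i.e.\ by a quantity in which $\mathcal{R}$ does not appear explicitly.

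First I would note that, by definition, $x^{II}_{\alpha,\delta}$ minimises
\[
x\mapsto \tfrac{1}{2}\|Ax-y^\delta\|^2+\alpha D_{\xi^\delta_\alpha}(x,x^\delta_\alpha)
\]
over $X$. Testing this functional with the competitor $x=x^\delta_\alpha$, and using that $\xi^\delta_\alpha\in\partial\mathcal{R}(x^\delta_\alpha)$ by the Tikhonov optimality condition \eqref{tikhopt} — so that $D_{\xi^\delta_\alpha}(x^\delta_\alpha,x^\delta_\alpha)=0$ — the minimality inequality becomes
\[
\tfrac{1}{2}\|Ax^{II}_{\alpha,\delta}-y^\delta\|^2+\alpha D_{\xi^\delta_\alpha}(x^{II}_{\alpha,\delta},x^\delta_\alpha)\le \tfrac{1}{2}\|Ax^\delta_\alpha-y^\delta\|^2=\tfrac{1}{2}\|\pad\|^2.
\]
Since the discrepancy term $\tfrac12\|Ax^{II}_{\alpha,\delta}-y^\delta\|^2=\tfrac12\|\padii\|^2$ on the left-hand side is nonnegative, I would simply drop it and divide by $\alpha$, obtaining
\[
D_{\xi^\delta_\alpha}(x^{II}_{\alpha,\delta},x^\delta_\alpha)\le \frac{1}{2\alpha}\|\pad\|^2=\frac12\psi_{\text{HD}}(\alpha,y^\delta),
\]
so the statement holds with $C=\tfrac12$. (Optionally, keeping the discrepancy term and invoking \eqref{PropResFuncEstimates} yields the sharper bound $\frac{1}{2\alpha}(\|\pad\|^2-\|\padii\|^2)$, but this is not needed.)

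I do not expect any genuine obstacle here; the argument is a one-line comparison once the variational problem \eqref{bregmaniterate} is written down. The only point requiring a moment's care is the identity $D_{\xi^\delta_\alpha}(x^\delta_\alpha,x^\delta_\alpha)=0$, which is valid precisely because $\xi^\delta_\alpha$ is a bona fide subgradient of $\mathcal{R}$ at $x^\delta_\alpha$, as furnished by \eqref{tikhopt}; and the nonnegativity of Bregman distances, which follows from convexity of $\mathcal{R}$.
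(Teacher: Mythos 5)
Your proof is correct, and it is a cleaner route than the one the paper takes. Both arguments rest on the same underlying idea --- test the minimality of $x^{II}_{\alpha,\delta}$ against the competitor $x=x^\delta_\alpha$ --- but you apply it directly to the defining functional \eqref{bregmaniterate}, where the Bregman distance $D_{\xi^\delta_\alpha}(\cdot,x^\delta_\alpha)$ already sits in the objective, so the quantity you want to bound appears immediately and you never need to expand it. The paper instead works with the equivalent shifted-data formulation $\tfrac12\|Ax-y^\delta-(y^\delta-Ax^\delta_\alpha)\|^2+\alpha\mathcal{R}(x)$, uses the same competitor to bound $\mathcal{R}(x^{II}_{\alpha,\delta})-\mathcal{R}(x^\delta_\alpha)$, and then substitutes this into the explicit expansion $D_{\xi^\delta_\alpha}(x^{II}_{\alpha,\delta},x^\delta_\alpha)=\mathcal{R}(x^{II}_{\alpha,\delta})-\mathcal{R}(x^\delta_\alpha)+\tfrac{1}{\alpha}\langle \pad,\padii-\pad\rangle$, which costs an extra Cauchy--Schwarz-type step and yields the constant $C=2$. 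Your version buys a one-line argument and the sharper constant $C=\tfrac12$; the paper's version has the minor side benefit of producing the intermediate identity \eqref{2ndquasineq} and an explicit bound on $\mathcal{R}(x^{II}_{\alpha,\delta})$, which are of independent use. Your two points of care --- that $D_{\xi^\delta_\alpha}(x^\delta_\alpha,x^\delta_\alpha)=0$ because $\xi^\delta_\alpha\in\partial\mathcal{R}(x^\delta_\alpha)$ by \eqref{tikhopt}, and that Bregman distances are nonnegative by the subgradient inequality --- are exactly the right ones and are both valid here.
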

\begin{proof}
We may write
\begin{equation}
\begin{split}
    D_{\xi^\delta_\alpha}(x^{II}_{\alpha,\delta},x^\delta_\alpha)&=\mathcal{R}(x^{II}_{\alpha,\delta})-\mathcal{R}(x^\delta_\alpha)+\frac{1}{\alpha}\langle Ax^\delta_\alpha-y^\delta,Ax^{II}_{\alpha,\delta}-y^\delta-(Ax^\delta_\alpha-y^\delta)\rangle \\
    & 
=\mathcal{R}(x^{II}_{\alpha,\delta})-\mathcal{R}(x^\delta_\alpha)
+ \frac{1}{\alpha}\langle \pad,\padii-\pad\rangle. 
    \end{split} \label{2ndquasineq}
\end{equation}
From the optimality of $x^{II}_{\alpha,\delta}$, we have that
\[
\frac{1}{2}\|Ax^{II}_{\alpha,\delta}-y^\delta-(y^\delta-Ax^\delta_\alpha)\|^2+\alpha\mathcal{R}(x^{II}_{\alpha,\delta})\le 2\|Ax^\delta_\alpha-y^\delta\|^2+\alpha\mathcal{R}(x^\delta_\alpha),
\]
i.e.,
\[
\mathcal{R}(x^{II}_{\alpha,\delta})\le\frac{2}{\alpha}\|Ax^\delta_\alpha-y^\delta\|^2-\frac{1}{2\alpha}\|Ax^{II}_{\alpha,\delta}-y^\delta-(y^\delta-Ax^\delta_\alpha)\|^2+\mathcal{R}(x^\delta_\alpha),
\]
from which we get that
\[
      D_{\xi^\delta_\alpha}(x^{II}_{\alpha,\delta},x^\delta_\alpha)\le\frac{1}{\alpha}\left( 2\|p^\delta_\alpha\|^2-\frac{1}{2}\|p^{II}_{\alpha,\delta}+p^\delta_\alpha\|^2+\langle p^\delta_\alpha,p^{II}_{\alpha,\delta}-p^\delta_\alpha\rangle \right)\le\frac{2}{\alpha}\|p^\delta_\alpha\|^2,
\]
and the desired estimate from above subsequently follows.
\end{proof}
\begin{proposition}
    Let $\alpha_\ast$ be selected according to the right quasi-optimality rule.
    Then $\alpha_\ast\to 0$ as $\delta\to 0$ for all $y^\delta\in Y$.
\end{proposition}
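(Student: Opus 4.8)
The plan is to reduce the assertion to the corresponding one for the heuristic discrepancy rule, which is established in \cite{jinlorenz}. By the preceding proposition there is a constant $C>0$ with $\psi_{\text{RQO}}(\alpha,y^\delta)=D_{\xi^\delta_\alpha}(x^{II}_{\alpha,\delta},x^\delta_\alpha)\le C\,\psi_{\text{HD}}(\alpha,y^\delta)$ for all $\alpha\in(0,\alpha_\text{max})$. Since $\alpha_\ast$ is a global minimiser of $\psi_{\text{RQO}}(\cdot,y^\delta)$, the residual estimate $\|Ax^\delta_\alpha-y^\delta\|\le\delta+2\|w\|\alpha$ gives, for every $\alpha\in(0,\alpha_\text{max})$,
\[
\psi_{\text{RQO}}(\alpha_\ast,y^\delta)\le\psi_{\text{RQO}}(\alpha,y^\delta)\le C\,\psi_{\text{HD}}(\alpha,y^\delta)\le C\left(\frac{\delta}{\sqrt{\alpha}}+2\|w\|\sqrt{\alpha}\right)^2.
\]
Choosing $\alpha=\alpha(\delta)$ with $\alpha(\delta)\to0$ and $\delta^2/\alpha(\delta)\to0$ as $\delta\to0$ (for instance $\alpha(\delta)=\sqrt{\delta}$) then shows that $\psi_{\text{RQO}}(\alpha_\ast,y^\delta)\to0$ as $\delta\to0$.

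Next we argue as for the heuristic discrepancy rule in \cite{jinlorenz}. Suppose, towards a contradiction, that $\alpha_\ast\not\to0$; then there are a constant $c>0$ and a null sequence $\delta_n\to0$ with $\alpha_{\ast,n}:=\alpha_\ast(\delta_n)\ge c$, and, after passing to a subsequence, $\alpha_{\ast,n}\to\bar\alpha\in[c,\alpha_\text{max}]$. By the stability of convex Tikhonov regularisation and of the second Bregman iterate with respect to the data and the regularisation parameter, uniformly on parameter sets bounded away from $0$ (cf.~\cite{variationalimaging}), we have $x^{\delta_n}_{\alpha_{\ast,n}}\rightharpoonup x_{\bar\alpha}$ and $x^{II}_{\alpha_{\ast,n},\delta_n}\rightharpoonup x^{II}_{\bar\alpha}$ weak-$\ast$, $\mathcal{R}(x^{\delta_n}_{\alpha_{\ast,n}})\to\mathcal{R}(x_{\bar\alpha})$, $\mathcal{R}(x^{II}_{\alpha_{\ast,n},\delta_n})\to\mathcal{R}(x^{II}_{\bar\alpha})$, and (since $\xi^\delta_\alpha=\tfrac1\alpha A^\ast(y^\delta-Ax^\delta_\alpha)$ and $\alpha\mapsto y^\delta-Ax^\delta_\alpha$ depends continuously on the data via the proximal mapping) $\xi^{\delta_n}_{\alpha_{\ast,n}}\to\xi_{\bar\alpha}$ strongly in $X^\ast$. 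Passing to the limit in the defining expression of $\psi_{\text{RQO}}$, we obtain $\psi_{\text{RQO}}(\alpha_{\ast,n},y^{\delta_n})\to D_{\xi_{\bar\alpha}}(x^{II}_{\bar\alpha},x_{\bar\alpha})$, and hence, by the first step, $D_{\xi_{\bar\alpha}}(x^{II}_{\bar\alpha},x_{\bar\alpha})=0$.

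It remains to exclude the possibility $\bar\alpha>0$, and this is the main obstacle: in contrast to $\psi_{\text{HD}}$, the functional $\psi_{\text{RQO}}$ does not bound the residual $\|Ax^\delta_\alpha-y^\delta\|$ from below, so the monotonicity argument of \cite{jinlorenz} does not carry over verbatim and the limiting identity $D_{\xi_{\bar\alpha}}(x^{II}_{\bar\alpha},x_{\bar\alpha})=0$ must instead be analysed through the optimality conditions. From the non-negativity of the Bregman distance this identity first yields $\xi_{\bar\alpha}\in\partial\mathcal{R}(x^{II}_{\bar\alpha})$, so that $\xi_{\bar\alpha}$ is a common subgradient of $\mathcal{R}$ at $x_{\bar\alpha}$ and at $x^{II}_{\bar\alpha}$ and $\mathcal{R}$ is affine along the segment joining them. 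Inserting this into the optimality characterisation of $x^{II}_{\bar\alpha}$ as the minimiser of $\tfrac12\|Ax-y\|^2+\bar\alpha D_{\xi_{\bar\alpha}}(x,x_{\bar\alpha})$, comparing with the optimality of $x_{\bar\alpha}$ for $\mathcal{T}_{\bar\alpha}$, and using $y=Ax^\dagger\in\operatorname{range}(A)$ together with the mild non-degeneracy $\|Ax_\alpha-y\|>0$ for all $\alpha\in(0,\alpha_\text{max})$ — which holds for genuinely ill-posed problems, and in particular in the diagonal setting of Section~\ref{DiagSec} — one is led to a contradiction. The boundary case $\bar\alpha=\alpha_\text{max}$ is treated identically after extending $\psi_{\text{RQO}}(\cdot,y)$ continuously to $\alpha_\text{max}$. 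Therefore $\alpha_\ast\to0$ as $\delta\to0$.
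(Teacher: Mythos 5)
Your first step coincides with the paper's entire proof: bound $\psi_{\text{RQO}}$ by $C\,\psi_{\text{HD}}$ via the preceding proposition, use minimality of $\alpha_\ast$ together with $\|Ax^\delta_\alpha-y^\delta\|\le\delta+2\|w\|\alpha$ to conclude $\psi_{\text{RQO}}(\alpha_\ast,y^\delta)\to 0$, and then defer to the argument of \cite{jinlorenz}. The paper stops there. You go further, and in doing so you put your finger on a genuine subtlety that the paper's one-line proof glosses over: unlike $\psi_{\text{HD}}$, the quantity $D_{\xi^\delta_\alpha}(x^{II}_{\alpha,\delta},x^\delta_\alpha)$ does not dominate $\|Ax^\delta_{\alpha}-y^\delta\|^2/\alpha$ from below, so the monotonicity-of-the-residual argument of \cite{jinlorenz} does not transfer verbatim. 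That observation is correct and worth recording.

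However, your attempt to close the resulting gap is not a proof. The compactness/stability step asserts, without argument, weak-$\ast$ convergence of $x^{\delta_n}_{\alpha_{\ast,n}}$ and $x^{II}_{\alpha_{\ast,n},\delta_n}$ to \emph{the} minimisers at $\bar\alpha$ (which tacitly presupposes uniqueness of minimisers, not assumed anywhere in the paper), convergence of the $\mathcal{R}$-values, and strong convergence of the subgradients, all uniformly over parameter sets bounded away from $0$; none of this is available in the cited references in the form you need. More importantly, the decisive step --- deriving a contradiction from $D_{\xi_{\bar\alpha}}(x^{II}_{\bar\alpha},x_{\bar\alpha})=0$ with $\bar\alpha>0$ --- is only gestured at (``one is led to a contradiction'') and invokes an additional non-degeneracy hypothesis $\|Ax_\alpha-y\|>0$ that is not part of the proposition. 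Note that $D_{\xi_{\bar\alpha}}(x^{II}_{\bar\alpha},x_{\bar\alpha})=0$ by itself only says that $\xi_{\bar\alpha}$ is a common subgradient of $\mathcal{R}$ at the two points, which can perfectly well occur when $\mathcal{R}$ is not strictly convex without contradicting either optimality condition. So the proof as written has a gap exactly where the work lies: one must either show that $\liminf_{\delta\to 0}\psi_{\text{RQO}}(\alpha,y^\delta)>0$ uniformly for $\alpha$ bounded away from $0$ (under suitable assumptions), or carry out the omitted contradiction in full; identifying the difficulty is not the same as resolving it.
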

\begin{proof}
Since we can estimate the quasi-optimality functional by the heuristic discrepancy functional, as per the previous proposition, the result follows from \cite{jinlorenz}.
\end{proof}
\begin{theorem}
    Let $x^\dagger$ satisfy \eqref{source},
    $\alpha_\ast$ be selected according to the right quasi-optimality rule,
    and suppose that there exists a positive constant $C>0$ such that
    \begin{equation}
    D_{\xi_\alpha}(x^\delta_\alpha,x_\alpha)\le C D_{\xi^\delta_\alpha}(x^{II}_{\alpha,\delta},x^\delta_\alpha)+\mathcal{O}(\alpha),
    \label{goodqoautoregcond}
    \end{equation}
    holds for all $\alpha\in (0,\alpha_\text{max})$ and $y,y^\delta\in Y$. Then
    \[
    D_{\xi}(x^\delta_{\alpha_\ast},x^\dagger)\to 0,
    \]
    as $\delta\to 0$.
\end{theorem}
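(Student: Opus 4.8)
The plan is to follow the same template as the proofs of Theorems~\ref{th:two}, \ref{th:three} and \ref{th:four}; this case is in fact the cleanest of the four, because the auto-regularisation condition \eqref{goodqoautoregcond} bounds the data propagation error \emph{directly} by the functional $\psi_{\text{RQO}}$ of the rule itself (up to a harmless $\mathcal{O}(\alpha)$ remainder), so that---in contrast to the Hanke--Raus and symmetric quasi-optimality rules---no expansion into noisy and noise-free residuals and no bookkeeping of cross terms is needed. First I would invoke \eqref{bregmantriangle}, which under the source condition \eqref{source} yields
\[
D_\xi(x^\delta_{\alpha_\ast},x^\dagger)\le D_{\xi_{\alpha_\ast}}(x^\delta_{\alpha_\ast},x_{\alpha_\ast}) + D_\xi(x_{\alpha_\ast},x^\dagger) + 6\|w\|\delta,
\]
so it suffices to show that each of the three terms on the right tends to $0$ as $\delta\to 0$. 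The last term is trivial; the middle term satisfies $D_\xi(x_{\alpha_\ast},x^\dagger)\le\frac{\|w\|^2}{2}\alpha_\ast$ by the standard error estimate and hence vanishes since $\alpha_\ast\to 0$ as $\delta\to 0$ by the preceding proposition. Thus everything reduces to controlling $D_{\xi_{\alpha_\ast}}(x^\delta_{\alpha_\ast},x_{\alpha_\ast})$.

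For that, apply \eqref{goodqoautoregcond} at $\alpha=\alpha_\ast$ and use the definition of $\psi_{\text{RQO}}$ to obtain $D_{\xi_{\alpha_\ast}}(x^\delta_{\alpha_\ast},x_{\alpha_\ast})\le C\,\psi_{\text{RQO}}(\alpha_\ast,y^\delta)+\mathcal{O}(\alpha_\ast)$, where the remainder again vanishes because $\alpha_\ast\to 0$. It then remains to prove $\psi_{\text{RQO}}(\alpha_\ast,y^\delta)\to 0$. Since $\alpha_\ast$ is the global minimiser of $\psi_{\text{RQO}}(\cdot,y^\delta)$ and this functional is non-negative (being a Bregman distance), we have for every $\alpha\in(0,\alpha_\text{max})$
\[
0\le\psi_{\text{RQO}}(\alpha_\ast,y^\delta)\le\psi_{\text{RQO}}(\alpha,y^\delta)\le C\,\psi_{\text{HD}}(\alpha,y^\delta)=\frac{C\|p^\delta_\alpha\|^2}{\alpha}\le C\left(\frac{\delta}{\sqrt{\alpha}}+2\|w\|\sqrt{\alpha}\right)^2,
\]
using the proposition $\psi_{\text{RQO}}\le C\psi_{\text{HD}}$ together with the residual bound $\|p^\delta_\alpha\|\le\delta+2\|w\|\alpha$ from the error estimates proposition. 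Choosing $\alpha=\alpha(\delta)$ with $\alpha(\delta)\to 0$ and $\delta^2/\alpha(\delta)\to 0$ (for instance $\alpha(\delta)=\sqrt{\delta}$) sends the right-hand side, and therefore $\psi_{\text{RQO}}(\alpha_\ast,y^\delta)$, to $0$. Combining with the first step, every summand in the displayed upper bound for $D_\xi(x^\delta_{\alpha_\ast},x^\dagger)$ tends to $0$, which proves the theorem.

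I do not expect a real obstacle here, only two small points to get right. First, the non-negative functional $\psi_{\text{RQO}}$ has no obvious a priori control on its own and is tamed precisely through the auxiliary estimate $\psi_{\text{RQO}}\le C\psi_{\text{HD}}$ established earlier, which transfers the HD-type bound $\left(\frac{\delta}{\sqrt{\alpha}}+2\|w\|\sqrt{\alpha}\right)^2$ to it; recognising that this is the right auxiliary input is the only mildly non-obvious step. Second, one must check that the $\mathcal{O}(\alpha)$ term in \eqref{goodqoautoregcond} is inert, which is immediate because $\alpha_\ast\to 0$. (For a convergence rates statement one would, as in the analogous propositions for the other rules, additionally impose a lower bound of the form $\alpha_\ast\le C\,\psi_{\text{RQO}}(\alpha_\ast,y^\delta)$ and balance $\alpha=\delta$; this is not needed for the convergence claim above.)
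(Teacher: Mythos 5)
Your proof is correct and follows exactly the template the paper intends: the paper omits this proof with the remark that it is analogous to the preceding theorems, and your argument — combining \eqref{bregmantriangle}, the direct bound $D_{\xi_{\alpha_\ast}}(x^\delta_{\alpha_\ast},x_{\alpha_\ast})\le C\psi_{\text{RQO}}(\alpha_\ast,y^\delta)+\mathcal{O}(\alpha_\ast)$ from \eqref{goodqoautoregcond}, minimality of $\alpha_\ast$, and the auxiliary estimate $\psi_{\text{RQO}}\le C\psi_{\text{HD}}$ with $\alpha(\delta)\to 0$, $\delta^2/\alpha(\delta)\to 0$ — is precisely the intended filling-in. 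Your observation that this case is cleaner than the HR and sQO rules (no splitting into noisy and noise-free residuals is needed) is also accurate.
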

We omit the proof as it is analogous to the above. Note that if the source condition \eqref{source} holds, $\alpha_\ast$ is selected according to the right quasi-optimality rule and the auto-regularisation condition \eqref{goodqoautoregcond} is satisfied, then one may also prove that
\[
D_\xi(x^\delta_\alpha,x^\dagger)=\mathcal{O}(\delta),
\]
for $\delta>0$ sufficiently small, provided that $\alpha_\ast\le\psi_{\text{RQO}}(\alpha_\ast,y^\delta)$.

\begin{remark}
Observe that the heuristic discrepancy, the Hanke-Raus, and the 
symmetric quasi-optimality rules can all be expressed in terms of 
the residuals of the Bregman iteration $\pad,\padii$. It should be 
noted that in the linear case  they
can all be subsumed under the so-called family of 
R1-rules \cite{RausR1}. The similarity of patterns in the formulas for $\psi$ 
may provide a hint that such a larger family of rules could be 
defined in the convex case as well. 
\end{remark}

\section{Diagonal operator}\label{DiagSec}
In the following analysis, we consider the case in which the operator $A:X\to Y$ is diagonal between spaces of summable sequences; in particular, where $X=\ell^q(\mathbb{N})$ and $Y=\ell^2(\mathbb{N})$ and the regularisation functional is selected as the $\ell^q$-norm to the $q$-th power. 
The main objective in this setting is 
to further investigate the auto-regularisation conditions  and to illustrate their validity for specific instances.

Let $\{e_n\}_{n\in\mathbb{N}}$ be a basis for $X$ and let $\{\lambda_n\}_{n\in\mathbb{N}}$ be a sequence of real scalars
monotonically decaying to $0$. Then we define a diagonal operator $A:\ell^q(\mathbb{N})\to\ell^2(\mathbb{N})$,
	\begin{equation*}
	Ae_n=\lambda_ne_n 
	\end{equation*}
The regularisation functional is chosen as
\[
\mathcal{R}:=\frac{1}{q}\|\cdot\|^q_{\ell^q}\qquad\text{and}\qquad \partial\mathcal{R}(x)=\{|x_n|^{q-1}\operatorname{sgn}(x_n)\}_{n\in\mathbb{N}},\qquad q\in(1,\infty).
\]
In this situation, the operator decouples and the components of the regularised solution can be computed independently of each other. Thus, for notational purposes, we opt to omit the sequence index $n$ for the components of 
the regularised solutions and write 
	\[
	x^\delta_\alpha=: \{x^\delta_{\alpha,n}\}_n =:\{\xadn\}_n,\quad  
	x_\alpha=: \{x_{\alpha,n}\}_n =:\{\xan\}_n,\quad
	\yd =:\{\ydn\}_n,\quad y=:\{\yn\}_n, \]
where $\xan, \xadn,\ydn,\yn \in \mathbb{R}$.
As the problem decouples, $\xan$ and $\xadn$ can be computed by an optimisation 
problem on $\mathbb{R}$, i.e., the optimality conditions 
lead to 
	\[
	\xadn =\hqi{q,\gamma_n}\left(\frac{\ydn}{\lambda_n}\right)\qquad\text{and}\qquad\xadiin:=x^{II}_{\alpha_n,\delta_n}=\hqi{q,\gamma_n} \left(2\frac{\ydn}{\lambda_n}-\xadn\right),\qquad\text{with }\gamma_n:=\frac{\alpha}{\lambda^2_n},
	\]
	for all $\alpha\in(0,\alpha_{\text{max}})$ and $\ydn\in\mathbb{R}$.
Here $\hq{q,\gamma}$ is the inverse of the function $\hq{q,\gamma}: \mathbb{R}\to  \mathbb{R}$,
\[ \hq{q,\gamma_n}(x)  = x +\gamma_n |x|^{q-1} \sgn(x), \qquad x \in \mathbb{R}.\]
Note that $\hqi{q,\gamma_n}$ corresponds to a proximal operator on $\mathbb{R}$.

Define $\padn:=\ydn-\lambda_n \xadn$ and $\pan$ analogously in case of exact data;
furthermore we use the expressions $\yn$, $\ydn$,  $\Dyn$, $\Dpn$,  $\Dpiin$ 
to denote the components of $\yd,$ $y,$ $\Dy,$ $\Dp$, $\Dpii$, respectively, 
where we again omit the sequence index $n$ in the notation:
\[
\ydn = y^\delta_n, \quad \yn = y_n, \quad 
\Dyn:= y_n-y^\delta_n, \qquad  \Dpn = \pa_n-{p_{\alpha}^\delta}_n, \qquad 
\Dpiin: = p^{II}_{\alpha_n}- \padii, \qquad n \in \mathbb{N}.
\]
For the $\ell^q$-case we can apply an appropriate scaling to reduce the corresponding inequalities:
in fact, letting 
\[  \hq{q}(x):= x + |x|^{q-1} \mbox{sign}(x), \qquad x \in \mathbb{R}, 
\qquad \text{and } \quad   \etan :=  \gamma_n^{\tfrac{1}{2-q}},
\] 
we obtain
\begin{align} 
\xadn &= \etan \hqi{q}(\tfrac{\ydn}{\etan \lambda_n}), \\
\padn & = \lambda_n \etan  \hqi{q^*}(\tfrac{\ydn}{\etan \lambda_n}),  \label{padH}\\
\xadiin & = \etan  \hqi{q}\left(\tfrac{\ydn}{\etan \lambda_n},
+  \hqi{q^*}(\tfrac{\ydn}{\etan \lambda_n})\right),\\
\padiin & = 
\lambda_n \etan\left(  \hqi{q^*}\left(
\tfrac{\ydn}{\etan \lambda_n} + 
 \hqi{q^*}(\tfrac{\ydn}{\etan \lambda_n})\right) 
-  \hqi{q^*}(\tfrac{\ydn}{\etan \lambda_n}) \right).
\end{align}
Note that for $x>0$ we have 
\begin{equation}\label{h1}  x^{q-1} \leq \hq{q}(x).  \end{equation}
We now state  useful estimates for the function $\hqi{q}$:
\begin{lemma}\label{usefullemma}
For $1 <q<\infty$, $q \not = 2$, there exist  constants $\dlo,D_q,$ and for any $\tau >0$, a constant $D_{q,\tau}$ such that 
for all $x_1 >0$ and $|x_2| \leq x_1$, 
\begin{align} \label{mmx}
\frac{1}{1 + \dlo  \hqi{q}(x_1)^{q-2}} \leq \frac{\hqi{q}(x_1) - \hqi{q}(x_1)}{x_1-x_2} \leq  \frac{1}{1 + \dup  \hqi{q}(x_1)^{q-2}} 
\leq \begin{dcases} 
\frac{x_1^{\tfrac{2-q}{q-1}}}{D_q}   & \mbox{ if } 1 < q < 2, \\ 
\frac{x_1^{\tfrac{2-q}{q-1}}}{D_{q,\tau} }  &\mbox{ if }   q>2 \mbox{ and } \forall  x_1 >\tau.  \\ 
\end{dcases} 
\end{align}
\end{lemma}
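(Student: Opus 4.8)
The plan is to reduce the two-sided bound on the difference quotient of $\hqi{q}$ to an assertion about a single scalar function on $[-1,1]$, and then to read off the rightmost bound from the elementary inequality \eqref{h1}. First I would observe that $\hq{q}$ is an odd, continuous, strictly increasing bijection of $\mathbb{R}$ (it is the sum of the two strictly increasing odd functions $x\mapsto x$ and $x\mapsto|x|^{q-1}\sgn(x)$), so $\hqi{q}$ is odd and increasing. Set $y_i:=\hqi{q}(x_i)$; then $y_1>0$ because $x_1>0$, and $|x_2|\le x_1$ forces $|y_2|\le y_1$ by monotonicity and oddness, so one may write $y_2=s\,y_1$ with $s\in[-1,1]$. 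Using that $x\mapsto|x|^{q-1}\sgn(x)$ is positively homogeneous of degree $q-1$, a short computation then gives, for $s\neq 1$,
\[
\frac{\hqi{q}(x_1)-\hqi{q}(x_2)}{x_1-x_2}=\frac{y_1-y_2}{\hq{q}(y_1)-\hq{q}(y_2)}=\frac{1}{1+y_1^{\,q-2}\,\phi(s)},\qquad \phi(s):=\frac{1-|s|^{q-1}\sgn(s)}{1-s},
\]
the degenerate case $s=1$ (that is, $x_1=x_2$) being recovered in the limit with $\phi$ replaced by $\phi(1):=q-1$.

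Second, I would check that $\phi$ extends continuously to the compact interval $[-1,1]$ (L'Hôpital at $s=1$ gives $\phi(1)=q-1$) and is strictly positive there: for $s\le 0$ the numerator and denominator are both $\ge 1$, for $s\in(0,1)$ both are positive, and $\phi(1)=q-1>0$. Hence $\phi$ attains a positive minimum and a positive maximum on $[-1,1]$, and I would set $\dup:=\min_{[-1,1]}\phi$ and $\dlo:=\max_{[-1,1]}\phi$. Substituting $\phi(s)\in[\dup,\dlo]$ and $y_1=\hqi{q}(x_1)$ into the identity above yields precisely the first two inequalities of \eqref{mmx} (the degenerate case following by continuity).

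For the rightmost bound I would use $1+a\ge a$ for $a>0$ to obtain $\tfrac{1}{1+\dup\hqi{q}(x_1)^{q-2}}\le\dup^{-1}\hqi{q}(x_1)^{2-q}$, so it remains to control $\hqi{q}(x_1)^{2-q}$. If $1<q<2$, \eqref{h1} applied at $x=\hqi{q}(x_1)$ gives $\hqi{q}(x_1)^{q-1}\le x_1$, and raising to the \emph{positive} power $\tfrac{2-q}{q-1}$ gives $\hqi{q}(x_1)^{2-q}\le x_1^{\frac{2-q}{q-1}}$; thus $D_q=\dup$ works for all $x_1>0$. If $q>2$ this step fails because $\tfrac{2-q}{q-1}<0$ reverses the inequality; instead I would note that $\hq{q}(x)=x+x^{q-1}\le 2x^{q-1}$ for $x\ge 1$, so $\hqi{q}(x_1)\ge(x_1/2)^{1/(q-1)}$ and hence $\hqi{q}(x_1)^{2-q}\le 2^{\frac{q-2}{q-1}}x_1^{\frac{2-q}{q-1}}$ whenever $x_1\ge\hq{q}(1)=2$. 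On the leftover bounded range $x_1\in(\tau,2)$ (relevant only when $\tau<2$) both $\hqi{q}(x_1)$ and $x_1^{\frac{2-q}{q-1}}$ stay bounded away from $0$, so the quotient $\tfrac{1}{1+\dup\hqi{q}(x_1)^{q-2}}\big/x_1^{\frac{2-q}{q-1}}$ is bounded there by a constant depending on $\tau$; taking the worse of the two constants then produces $D_{q,\tau}$.

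The main obstacle is exactly the regime $q>2$ near $x_1=0$: there $\hqi{q}(x_1)\sim x_1$ rather than $\sim x_1^{1/(q-1)}$, so $\hqi{q}(x_1)^{2-q}$ grows strictly faster than $x_1^{\frac{2-q}{q-1}}$ and no single constant can work down to the origin — this is what forces the cutoff $x_1>\tau$ together with the separate compactness argument on $(\tau,2)$. Everything else is routine: the homogeneity reduction, the continuity and positivity of $\phi$, and the elementary estimates for $\hq{q}$.
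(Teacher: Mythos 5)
Your proposal is correct and follows essentially the same route as the paper: the homogeneity reduction to the scalar function $\phi(s)=\frac{1-|s|^{q-1}\sgn(s)}{1-s}$ on $[-1,1]$ (the paper's $k$), taking its positive extrema to get $\dup,\dlo$, the inequality \eqref{h1} for $1<q<2$, and the dominance $\hq{q}(x)\lesssim x^{q-1}$ away from the origin for $q>2$ (the paper absorbs your explicit threshold-plus-compactness step into a single constant $C_\tau$). Your closing remark correctly pinpoints why the cutoff $x_1>\tau$ is unavoidable when $q>2$.
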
 
\begin{proof}
For any $z_1>0, |z_2| \leq z_1$, we have
\[\frac{\hq{q}(z_1)-\hq{q}(z_2)}{z_1-z_2}=1+\frac{z_1^{q-1}-|z_2|^{q-1}\sgn(z_2)}{z_1-z_2}=
			1+z_1^{q-2}k\left(\frac{z_2}{z_1}\right) \quad \begin{cases} 
			\leq 1+z_1^{q-2} \dlo \\ 
			\geq 1 + z_1^{q-2} \dup \end{cases}
\]
where 
\[
			\dup\le k(z):=\frac{1-|z|^{q-1}\sgn(z)}{1-z}\le\dlo, \qquad \forall z \in [-1,1].
\]
Replacing $z_i$ by $\hqi{q}(x_i)$ yields the lower bound and the first upper bound  in  \eqref{mmx}.
In case $1 < q <2$, we find that 
\[  \frac{1}{1 + \dup  \hqi{q}(x_1)^{q-2}} = 
 \frac{ \hqi{q}(x_1)^{2-q}}{ \hqi{q}(x_1)^{2-q} + \dup } \leq 
  \frac{ x_1^\frac{2-q}{q-1}}{ \dup },  \]
where we used that  $\hqi{q}(x_1)^{2-q} \geq 0$ in the denominator and the estimate 
 $\hqi{q}(x_1) \leq x_1^\frac{1}{q-1}$ that follows from \eqref{h1}.
 Now consider the case $q>2$. Then 
 \[ \frac{1}{1 + \dup  \hqi{q}(x_1)^{q-2}} \leq \frac{C_\tau^{\tfrac{2-q}{q-1}}}{\dup} x_1^{\tfrac{2-q}{q-1}} \qquad \forall x_1 \geq \tau, \]
 where we used the estimate 
 \[ \hq{q}(x) \leq C_\tau x^{q-1} \qquad \forall x \geq \hq{q}(\tau).\] 
This yields the result. 
\end{proof}

\subsubsection{The heuristic discrepancy rule}
In case the forward operator is diagonal, we may reduce the auto-regularisation condition to Muckenhoupt-type inequalities \cite{kindermannpilipenko} which may shed more light on them.
\begin{proposition}\label{prophd}
	Let $A:\ell^q(\mathbb{N})\to\ell^2(\mathbb{N})$ be a diagonal operator and let $\mathcal{R}=\frac{1}{q}\|\cdot\|^q_{\ell^q}$. If 
	\begin{equation}
	\sum_{n\in I_{\text{HD}}^c} \realp{\Dpn}{\Dyn}  \le C \sum_{n\in I_{\text{HD}}}|\Dyn|^2,
	\label{hdmuckenhouptineq}
	\end{equation}
	is satisfied, where,
	\[
	I_{\text{HD}}:=\left\{n\in\mathbb{N}:|\Dyn|\le C|\Dpn| \right\}.
	\]
	Then there exists a positive constant $C>0$ such that
	\begin{equation}
	D_{\xi_\alpha}(x^\delta_\alpha,x_\alpha)\le C\frac{\|\Dp\|^2}{\alpha},
	\label{autoregularisationhd}
	\end{equation}
	for all $\alpha\in(0,\alpha_\text{max})$ and $y,y^\delta\in Y$.
	\end{proposition}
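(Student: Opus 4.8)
The plan is to deduce \eqref{autoregularisationhd} from Lemma~\ref{Lemma1} together with the reduction already recorded around \eqref{hdabstractineq}. By Lemma~\ref{Lemma1},
\[
D_{\xi_\alpha}(x^\delta_\alpha,x_\alpha)\le\frac{1}{\alpha}\langle\Dy-\Dp,\Dp\rangle=\frac{1}{\alpha}\left(\langle\Dp,\Dy\rangle-\|\Dp\|^2\right)\le\frac{1}{\alpha}\langle\Dp,\Dy\rangle,
\]
so it suffices to establish the abstract bound $\langle\Dp,\Dy\rangle\le C\|\Dp\|^2$ of \eqref{hdabstractineq}. Since $Y=\ell^2$ and $A$ is diagonal, the inner product and norm on $Y$ decouple over $n\in\mathbb{N}$: one has $\langle\Dp,\Dy\rangle=\sum_{n}\Dpn\Dyn$ and $\|\Dp\|^2=\sum_{n}|\Dpn|^2$, where the common sign flip between the abstract quantities $\Dy,\Dp$ and their indexed counterparts $\Dyn,\Dpn$ cancels in each of these bilinear and quadratic expressions. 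The whole matter is thus reduced to a statement about real series.

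Next I would split $\sum_{n}\Dpn\Dyn$ into the sub-sums over $I_{\text{HD}}$ and $I_{\text{HD}}^c$. On $I_{\text{HD}}$ the defining inequality $|\Dyn|\le C|\Dpn|$ gives, termwise, $\Dpn\Dyn\le|\Dpn||\Dyn|\le C|\Dpn|^2$, hence
\[
\sum_{n\in I_{\text{HD}}}\Dpn\Dyn\le C\sum_{n\in I_{\text{HD}}}|\Dpn|^2\le C\|\Dp\|^2.
\]
On $I_{\text{HD}}^c$, the hypothesis \eqref{hdmuckenhouptineq} bounds $\sum_{n\in I_{\text{HD}}^c}\Dpn\Dyn$ by $C\sum_{n\in I_{\text{HD}}}|\Dyn|^2$, and a second application of $|\Dyn|\le C|\Dpn|$ — now on the index set $I_{\text{HD}}$ — estimates this right-hand side by $C^2\sum_{n\in I_{\text{HD}}}|\Dpn|^2\le C^2\|\Dp\|^2$. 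Adding the two contributions yields $\langle\Dp,\Dy\rangle\le\widetilde C\|\Dp\|^2$ with $\widetilde C$ depending only on $C$, and dividing through by $\alpha$ gives precisely \eqref{autoregularisationhd}.

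The hard part is essentially nonexistent: once Lemma~\ref{Lemma1} and the reduction \eqref{hdabstractineq} are in hand, the argument is pure bookkeeping — splitting a series and chasing constants. The only points requiring a little care are the sign conventions relating $\Dy,\Dp$ to $\Dyn,\Dpn$, and the observation that the constant $C$ appearing in the definition of $I_{\text{HD}}$ and the constant $C$ in \eqref{hdmuckenhouptineq} may be taken to coincide (or replaced by their maximum). It is worth remarking, though not strictly needed for the estimate, that monotonicity of the scalar proximal maps $\hqi{q^*}$ forces $\Dpn\Dyn\ge 0$ for every $n$, so the series involved are in fact series of nonnegative terms and the decomposition is unconditionally legitimate.
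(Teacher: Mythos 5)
Your proof is correct and follows essentially the same route as the paper's: reduce via Lemma~\ref{Lemma1} to the abstract inequality \eqref{hdabstractineq}, decouple it componentwise for the diagonal operator, and split the sum over $I_{\text{HD}}$ and $I_{\text{HD}}^c$, handling the first part with the defining inequality of $I_{\text{HD}}$ and the second with hypothesis \eqref{hdmuckenhouptineq}. Your version is in fact slightly more complete than the paper's (which leaves the final bookkeeping implicit), and your attention to the sign conventions relating $\Dy,\Dp$ to $\Dyn,\Dpn$ is correct.
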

	\begin{proof}
		For $A:\ell^q(\mathbb{N})\to\ell^2(\mathbb{N})$ a diagonal operator, \eqref{hdabstractineq} may be rewritten as 
		\[
		\begin{aligned}
		&\sum_{n\in\mathbb{N}}\realp{\Dpn}{\Dyn}\le C\sum_{n\in\mathbb{N}}|\Dpn|^2.
		\end{aligned}
		\]
		Now, we may decompose the above sum as the superposition of two sums to obtain
		\[
		\left(\sum_{n\in I_{\text{HD}}}+\sum_{n\in I_{\text{HD}}^c}\right)
		\realp{\Dpn}{\Dyn} \le C\sum_{n\in I_{\text{HD}}}|\Dpn|^2+\sum_{n\in I_{\text{HD}}^c} 	\realp{\Dpn}{\Dyn}.
		\]
		Thus we may obtain the desired inequality.
		\end{proof}
We are now in the position to state the main convergence result for the heuristic discrepancy principle. 
\begin{theorem}\label{th:six}
Let $A$ and $\mathcal{R}$ be as in Proposition~\ref{prophd}
and $q\in (1,\infty)$. 
Let $\alpha_*$ be selected by the heuristic discrepancy rule, 
let the source condition \eqref{source} be satisfied, $\delta^\ast\ne 0$ and
suppose that there are constants $C_1$, $C_2$ such that for all $\yd$ it holds that 
	\begin{equation}
	\sum_{\{ n: \tfrac{\lambda_n^q}{\alpha} \max\{|\yn|,|\ydn|\}^{2-q}\geq C_1\}} 
	|\Dyn|^2
	\max\{|\yn|,|\ydn|\}^{q-2} \frac{\alpha}{\lambda_n^q} \leq 
	C_2 
		\sum_{\{n: \tfrac{\lambda_n^q}{\alpha} \max\{|\yn|,|\ydn|\}^{2-q}< C_1\}} 
          |\Dyn|^2.
	\label{hdmuckenhouptineqsimple}
	\end{equation}
Then 
\[ D_\xi(x^\delta_{\alpha_\ast},x^\dagger)\to 0\qquad \text{as } \delta\to 0. \]
\end{theorem}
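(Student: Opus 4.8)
The plan is to obtain the statement by chaining the two structural results already proved. First I would invoke Theorem~\ref{th:two}: since the source condition \eqref{source} holds and $\delta^\ast\ne 0$, we get $D_\xi(x^\delta_{\alpha_\ast},x^\dagger)\to 0$ as soon as the auto-regularisation condition \eqref{newnoisecondition} is verified. Second I would invoke Proposition~\ref{prophd}: in the diagonal $\ell^q$ setting, \eqref{newnoisecondition} (that is, \eqref{autoregularisationhd}) holds once the Muckenhoupt-type inequality \eqref{hdmuckenhouptineq} holds for some constant $C$, with $I_{\text{HD}}=\{n:|\Dyn|\le C|\Dpn|\}$. So the whole proof comes down to deducing \eqref{hdmuckenhouptineq} (for all $\alpha$ and all data) from the hypothesis \eqref{hdmuckenhouptineqsimple}, which is a one-dimensional affair.

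For the reduction I would work componentwise through the scaled representation \eqref{padH}. Write $a_n:=\yn/(\etan\lambda_n)$, $b_n:=\ydn/(\etan\lambda_n)$, $M_n:=\max\{|\yn|,|\ydn|\}$ and $t_n:=M_n/(\etan\lambda_n)$. Then $\pan=\lambda_n\etan\hqi{q^*}(a_n)$ (the exact-data analogue of \eqref{padH}) and $\padn=\lambda_n\etan\hqi{q^*}(b_n)$, so $\Dpn/\Dyn$ is precisely the difference quotient of $\hqi{q^*}$ at $a_n,b_n$. Applying Lemma~\ref{usefullemma} with $q$ replaced by $q^*$ (legitimate, as $q\ne 2\Leftrightarrow q^*\ne 2$, and oddness of $\hqi{q^*}$ reduces us to the case $x_1=t_n>0$, $|x_2|\le x_1$ of that lemma) yields
\[
\frac{1}{1+\dlo\,\hqi{q^*}(t_n)^{q^*-2}}\le\frac{\Dpn}{\Dyn}\le\frac{1}{1+\dup\,\hqi{q^*}(t_n)^{q^*-2}},
\]
so in particular $0<\Dpn/\Dyn\le 1$; hence $\Dpn$ and $\Dyn$ share a sign and $\Dpn\Dyn=(\Dpn/\Dyn)|\Dyn|^2\ge 0$. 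Since $\etan\lambda_n=\alpha^{1/(2-q)}\lambda_n^{-q/(2-q)}$, one has the bookkeeping identity $t_n^{2-q}=\tfrac{\lambda_n^q}{\alpha}M_n^{2-q}$; therefore the two index sets occurring in \eqref{hdmuckenhouptineqsimple} are exactly $\{n:t_n^{2-q}\ge C_1\}$ and $\{n:t_n^{2-q}<C_1\}$, and the weight $M_n^{q-2}\tfrac{\alpha}{\lambda_n^q}$ equals $t_n^{q-2}$.

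It remains to match the index sets and close the estimate. Using the lower bound in \eqref{mmx} (applied to $q^*$) together with the monotonicity of $x\mapsto\hqi{q^*}(x)^{q^*-2}$, one sees that one can fix a constant $C$ --- depending only on $q$ and $C_1$, hence uniform in $\alpha$, $n$ and the data --- with $\{n:t_n^{2-q}<C_1\}\subseteq I_{\text{HD}}$, equivalently $I_{\text{HD}}^c\subseteq\{n:t_n^{2-q}\ge C_1\}$. Using the last bound in \eqref{mmx} (again applied to $q^*$) one gets $\Dpn/\Dyn\le C'\,t_n^{q-2}$, hence $\Dpn\Dyn\le C'\,|\Dyn|^2 M_n^{q-2}\tfrac{\alpha}{\lambda_n^q}$, on $\{n:t_n^{2-q}\ge C_1\}$ --- here one may first have to enlarge $C_1$ so that $t_n>\tau$ holds on that set, which is harmless since enlarging $C_1$ only shrinks the left-hand sum and enlarges the right-hand sum in \eqref{hdmuckenhouptineqsimple}. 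Combining with \eqref{hdmuckenhouptineqsimple},
\begin{align*}
\sum_{n\in I_{\text{HD}}^c}\Dpn\Dyn
&\le C'\!\!\!\!\sum_{\{n:\,t_n^{2-q}\ge C_1\}}\!\!\!\!|\Dyn|^2 M_n^{q-2}\frac{\alpha}{\lambda_n^q}
\le C'C_2\!\!\!\!\sum_{\{n:\,t_n^{2-q}<C_1\}}\!\!\!\!|\Dyn|^2
\\
&\le C'C_2\sum_{n\in I_{\text{HD}}}|\Dyn|^2,
\end{align*}
which is \eqref{hdmuckenhouptineq}. The cases $1<q<2$ and $q>2$ must be run separately, since the sign of both $2-q$ and $q^*-2$ flips; the argument is symmetric, only the $\tau$-restriction in Lemma~\ref{usefullemma} (active when $q<2$, i.e.\ $q^*>2$) requiring the $C_1$-enlargement. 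Finally Proposition~\ref{prophd} turns \eqref{hdmuckenhouptineq} into \eqref{newnoisecondition}, and Theorem~\ref{th:two} concludes.

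I expect the comparison of index sets to be the crux: one must check, uniformly in $n$, $\alpha$ and the data, that the operator-side threshold $|\Dyn|\le C|\Dpn|$ and the data-side threshold $\tfrac{\lambda_n^q}{\alpha}M_n^{2-q}<C_1$ define comparable sets. This is exactly what the two-sided estimates of Lemma~\ref{usefullemma} are for, and care is needed in the two regimes $t_n\to 0$ (where $\hqi{q^*}$ is essentially the identity) and $t_n\to\infty$ (where it behaves like a pure power). The remaining ingredients --- the invocations of Theorem~\ref{th:two} and Proposition~\ref{prophd}, and the final chain of inequalities --- are routine.
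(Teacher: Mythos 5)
Your proposal is correct and follows essentially the same route as the paper's proof: reduce via Theorem~\ref{th:two} and Proposition~\ref{prophd} to showing that \eqref{hdmuckenhouptineqsimple} implies \eqref{hdmuckenhouptineq}, then use the two-sided bounds of Lemma~\ref{usefullemma} (applied with $q^*$) on the difference quotient $\Dpn/\Dyn$ to identify the threshold $|\Dyn|\le C|\Dpn|$ with the data-side threshold $\tfrac{\lambda_n^q}{\alpha}\max\{|\yn|,|\ydn|\}^{2-q}<C_1$ and to bound the complementary sum by the weighted sum, using $\tfrac{2-q^*}{q^*-1}=q-2$. Your handling of the $\tau$-restriction by enlarging $C_1$ is equivalent to the paper's observation that the restriction is automatic on the complement of the good index set.
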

\begin{proof}
From the preceding results, it remains to verify that \eqref{hdmuckenhouptineqsimple} implies 
\eqref{hdmuckenhouptineq}. For this, we estimate the ratio $\frac{\Dpn }{\Dyn}$.
We note that by monotonicity, 
this expression is always positive, and moreover, is invariant when $\yn$, $\ydn$
are switched and when $\yn$, $\ydn$ are replaced by $-\yn$, $-\ydn$. Thus, without loss 
of generality we may assume that $\yn >0$ and $|\ydn| \leq \yn$. Using \eqref{padH}, 
Lemma~\ref{usefullemma} with $x_1  = \frac{\yn}{\lambda_n \etan} =  \frac{\max\{|\yn|,|\ydn|\}}{\lambda_n \etan}$ and
$x_2 =   \frac{\ydn}{\lambda_n \etan}$, we 
find that 
\begin{equation}\label{anix}
\frac{\Dpn }{\Dyn} \geq 
\frac{1}{1 + \dlo  \hqi{q^*}\left(\frac{\yn}{\lambda_n \etan}\right)^{q^*-2}}. 
\end{equation} 
A brief consideration yields that \eqref{hdmuckenhouptineq} is satisfied if the corresponding 
inequality is satisfied when $I_{\text{HD}}$ is replaced with any $I_{\text{HD}}' \subset I_{\text{HD}}$ (on the left and right-hand side of
the inequality). Thus, a sufficient inequality is when the index set 
\[ I_{\text{HD}}':= \left\{ n \in \mathbb{N} : \frac{1}{1 + \dlo  \hqi{q^*}\left(\frac{\yn}{\lambda_n \etan}\right)^{q^*-2}} \geq \frac{1}{C} \right\},  \]
is used in place of $I_{\text{HD}}$. The condition in this index set can be simplified to 
\[     \frac{ \max\{|\yn|,|\ydn|\}}{\lambda_n \etan}   \begin{cases} \leq  \hq{q^*}\left(\left(\frac{C-1}{\dlo}\right)^\frac{1}{q^*-2}\right),    
& q^*>2, \\ 
 \geq  \hq{q^*}\left(\left(\frac{C-1}{\dlo}\right)^\frac{1}{q^*-2}\right),
& 1<q^*<2.  \end{cases} 
\] 
Note that $\lambda_n \etan = \left(\frac{\alpha}{\lambda^q}\right)^\frac{1}{2-q}$. Thus, in any case, we obtain that 
\[ I_{\text{HD}}':= \{ n \in \mathbb{N} : \tfrac{\lambda_n^q}{\alpha} \max\{|\yn|,|\ydn|\}^{2-q} \leq C_q  \}, \]
with $C_q =  \left[\hq{q^*}\left(\left(\frac{C-1}{\dlo}\right)^\frac{1}{q^*-2}\right) \right]^{2-q}$.
In the next step, we observe that Lemma~\ref{usefullemma} yields the upper bound 
\begin{equation}\label{thi} \frac{\Dpn }{\Dyn} \leq 
C \left[\max\{|\yn|,|\ydn|\}^{2-q} \frac{\lambda^q}{\alpha}\right]^{\frac{1}{2-q}\frac{2-q^*}{q^*-1}},
\end{equation}
in case that $1 \leq q^* \leq 2$.  The same estimate holds for $q^*>2$ 
whenever 
$ \frac{ \max\{|\yn|,|\ydn|\}}{\lambda_n \etan}   \geq \tau$. However, as the left-hand side 
is a sum over the complement of 
${I_{\text{HD}}'}$, the latter condition holds at the complement; hence \eqref{thi} is true in any case. 
As $\frac{2-q^*}{q^*-1} = q-2$, and since  the condition with an upper bound for 
the left-hand side  is sufficient 
for \eqref{hdmuckenhouptineq}, it is thus shown that \eqref{hdmuckenhouptineqsimple}  implies \eqref{hdmuckenhouptineq}.

\end{proof} 
\subsubsection{The Hanke-Raus rule}
The following are sufficient conditions for the auto-regularisation condition of the Hanke-Raus rule to hold in the present setting.
\begin{proposition}\label{prophr}
	Let $A:\ell^q(\mathbb{N})\to\ell^2(\mathbb{N})$ be a diagonal operator and 
	let $\mathcal{R}=\frac{1}{q}\|\cdot\|^q_{\ell^q}$. 
Suppose that for all $\yn,\ydn$ and all $n \in \mathbb{N}$,
\begin{equation}\label{posHR}
\realp{\Dpiin}{\Dpn} \geq 0. \end{equation}	
Furthermore, let the condition
	\begin{equation}
	\sum_{n\in I_{\text{HR}}^c} \realp{\Dpn}{\Dyn} -  |\Dpn|^2	
	\le C\left(  \sum_{n\in I_{\text{HR}}}\realp{\Dyn}{\Dpn} \right)
	\label{hrmuckenhoupt}
	\end{equation}
	hold, where
	\[
	I_{\text{HR}}:=
	\left\{n\in\mathbb{N}: \theta  
	|\Dpn +\Dyn|\le |\Dpiin+ 
	\Dpn| \right\}, \qquad \mbox{ for some } \theta > \frac{1}{2}. 
	\]
	Then there exists a positive constant $C>0$ such that
	\[
	\begin{aligned}
	D_{\xi_\alpha}(x^\delta_\alpha,x_\alpha)&\le C \frac{1}{\alpha}\langle \Dpii,\Dp\rangle,
	\end{aligned}
	\]
	for all $\alpha\in(0,\alpha_\text{max})$.
\end{proposition}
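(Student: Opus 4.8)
The plan is to reduce the claim, via Lemma~\ref{Lemma1} and the sufficient condition recorded immediately after Theorem~\ref{th:three}, to verifying $\langle\Dp,\Dy\rangle-\|\Dp\|^2\le C\langle\Dpii,\Dp\rangle$. Since $A$ is diagonal this inequality decouples; keeping track of the component-wise sign conventions fixed above (so that $\langle\Dp,\Dy\rangle=\sum_n\realp{\Dpn}{\Dyn}$, $\|\Dp\|^2=\sum_n|\Dpn|^2$ and $\langle\Dpii,\Dp\rangle=\sum_n\realp{\Dpiin}{\Dpn}$), the task becomes
\[
\sum_{n\in\mathbb{N}}\bigl(\realp{\Dpn}{\Dyn}-|\Dpn|^2\bigr)\le C\sum_{n\in\mathbb{N}}\realp{\Dpiin}{\Dpn}.
\]
I would split $\mathbb{N}=I_{\text{HR}}\cup I_{\text{HR}}^c$ and bound each part by the right-hand side, noting that by \eqref{posHR} every summand on the right is nonnegative, so the full sum dominates any restricted sum.

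The core step is the estimate on $I_{\text{HR}}$. By \eqref{padH}, $\padn$ arises from $\ydn$, and $\padiin+\padn$ from $\ydn+\padn$, by applying the one-dimensional proximal operator $\hqi{q^*}$ up to the affine scaling by $\lambda_n\etan$; by firm non-expansivity \eqref{firmnonexpansivity} such maps are monotone and $1$-Lipschitz. Hence $\Dpn$ and $\Dyn$ have a common sign with $|\Dpn|\le|\Dyn|$ (so $|\Dpn+\Dyn|=|\Dpn|+|\Dyn|$), and $|\Dpiin+\Dpn|\le|\Dyn+\Dpn|$. On $I_{\text{HR}}$ I would then combine the membership condition $\theta|\Dpn+\Dyn|\le|\Dpiin+\Dpn|$ with $|\Dpiin|\ge|\Dpiin+\Dpn|-|\Dpn|$ and $|\Dpn|\le|\Dyn|$ to obtain
\[
|\Dpiin|\ge\theta\bigl(|\Dpn|+|\Dyn|\bigr)-|\Dpn|\ge c(\theta)\,|\Dyn|,\qquad c(\theta):=\min\{\theta,\,2\theta-1\}>0,
\]
which is precisely the point at which the threshold $\theta>\tfrac{1}{2}$ is used, to keep $c(\theta)$ strictly positive. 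Writing $\realp{\Dpiin}{\Dpn}=|\Dpiin|\,|\Dpn|$ and $\realp{\Dpn}{\Dyn}=|\Dpn|\,|\Dyn|$ (again via \eqref{posHR} and the common-sign property), this yields the pointwise bound $\realp{\Dpn}{\Dyn}-|\Dpn|^2\le\realp{\Dpn}{\Dyn}\le c(\theta)^{-1}\realp{\Dpiin}{\Dpn}$ on $I_{\text{HR}}$.

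It then remains to sum: over $I_{\text{HR}}$ the pointwise bound gives $\sum_{I_{\text{HR}}}\bigl(\realp{\Dpn}{\Dyn}-|\Dpn|^2\bigr)\le c(\theta)^{-1}\sum_{I_{\text{HR}}}\realp{\Dpiin}{\Dpn}$, while over $I_{\text{HR}}^c$ hypothesis \eqref{hrmuckenhoupt} bounds $\sum_{I_{\text{HR}}^c}\bigl(\realp{\Dpn}{\Dyn}-|\Dpn|^2\bigr)$ by $C\sum_{I_{\text{HR}}}\realp{\Dyn}{\Dpn}\le C\,c(\theta)^{-1}\sum_{I_{\text{HR}}}\realp{\Dpiin}{\Dpn}$; adding these and using $\sum_{I_{\text{HR}}}\realp{\Dpiin}{\Dpn}\le\sum_{n\in\mathbb{N}}\realp{\Dpiin}{\Dpn}=\langle\Dpii,\Dp\rangle$ (by \eqref{posHR}) gives the asserted inequality with constant $(1+C)\,c(\theta)^{-1}$. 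The main obstacle is the middle step: extracting the lower bound $|\Dpiin|\ge c(\theta)|\Dyn|$ on $I_{\text{HR}}$ requires using the membership condition of $I_{\text{HR}}$, the $1$-Lipschitz bound for the second-iterate residual map $\ydn+\padn\mapsto\padiin+\padn$, and the contraction estimate $|\Dpn|\le|\Dyn|$ all at once, and it is exactly this balancing act that forces the hypothesis $\theta>\tfrac{1}{2}$; a little care is also needed to keep the sign conventions for the global $\Dp,\Dy,\Dpii$ and their components $\Dpn,\Dyn,\Dpiin$ straight.
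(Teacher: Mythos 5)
Your proposal is correct and takes essentially the same route as the paper's proof: reduce via Lemma~\ref{Lemma1} to the componentwise inequality, split over $I_{\text{HR}}$ and $I_{\text{HR}}^c$, use \eqref{posHR} to keep the right-hand sums nonnegative, and invoke \eqref{hrmuckenhoupt} for the complement. The only (cosmetic) difference is in the core estimate on $I_{\text{HR}}$: you extract the lower bound $|\Dpiin|\ge c(\theta)|\Dyn|$ via the reverse triangle inequality, whereas the paper works directly with the ratio $R^{II}=(\Dpiin+\Dpn)/(\Dpn+\Dyn)\ge\theta$ and the chain $\realp{\Dpiin}{\Dpn}\ge(2\theta-1)\realp{\Dyn}{\Dpn}$ --- equivalent manipulations using the same ingredients (monotonicity and non-expansivity of the scalar proximal maps, the membership condition, and $\theta>\tfrac{1}{2}$).
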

\begin{proof}
	Recall that it suffices to prove that
	\begin{equation}\label{scd}
	\sum_{n\in\mathbb{N}}
	\realp{\Dpn}{\Dyn}
	-\sum_{n\in\mathbb{N}}|\Dpn|^2
	\le C\sum_{n\in\mathbb{N}}
	\realp{\Dpiin}{\Dpn}.	
	\end{equation}
We may define 
\[ R^{II}:= \frac{\Dpiin+ \Dpn}
{\Dpn +\Dyn}, \]
and observe that this quantity is in any case nonegative and for $n \in I_{\text{HR}}$ larger than $\theta >\frac{1}{2}$. 
Thus, 
\begin{align*} 
&\realp{\Dpiin}{\Dpn}= 
 R^{II}\left(|\Dpn|^2 +  \Dyn \Dpn\right) - 
|\Dpn|^2\geq 
  \theta  \Dyn \Dpn - (1-\theta)  |\Dpn|^2  
 \geq   (2 \theta -1 )\Dyn  \Dpn.
\end{align*} 
Therefore, \eqref{scd} holds for $n \in I_{\text{HR}}$. The remaining sum can be bounded by 
 \eqref{hrmuckenhoupt} and because of \eqref{posHR}, the 
 sum over $I_{\text{HR}}^c$ on the right-hand side can 
 be estimated by $0$ from below. This suffices to prove the statement. 
\end{proof}
 
We proceed by verifying \eqref{posHR}. Contrary to the heuristic 
discrepancy case, we have to impose a restriction on the 
regularisation functional $\mathcal{R}=\frac{1}{q}\|\cdot\|^q_{\ell^q}$.
\newcommand{\vv}{r} 
\begin{lemma}
If $q \geq \frac{3}{2}$, then it follows that
\[ \realp{\Dpiin}{\Dpn}   \geq 0, \]
for all $\alpha\in(0,\alpha_\text{max})$ and $y^\delta\in Y$.
\end{lemma}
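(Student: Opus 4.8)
The plan is to reduce the claim to a one‑dimensional monotonicity statement for the function $g := \hqi{q^*}$ and then to settle it by a short case analysis in $q^*$. Since the problem decouples, the inequality is pointwise in $n$, so it suffices to show $\Dpiin\,\Dpn \geq 0$ for each fixed $n$ and all $\yn,\ydn$. Abbreviating $t := \tfrac{\yn}{\etan\lambda_n}$ and $s := \tfrac{\ydn}{\etan\lambda_n}$, the explicit residual formulas (cf.\ \eqref{padH} and the displays next to it) give $\Dpn = \lambda_n\etan\bigl(g(t)-g(s)\bigr)$ and $\Dpiin = \lambda_n\etan\bigl(\phi(t)-\phi(s)\bigr)$, where
\[
\phi(u) := g\bigl(u+g(u)\bigr)-g(u).
\]
Hence $\Dpiin\,\Dpn = (\lambda_n\etan)^2\bigl(\phi(t)-\phi(s)\bigr)\bigl(g(t)-g(s)\bigr)$, and as $g$ is strictly increasing, $g(t)-g(s)$ carries the sign of $t-s$. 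Therefore the lemma reduces to the single claim that $\phi$ is monotonically non-decreasing on $\mathbb{R}$.

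Since $\hq{q^*}$ is odd and vanishes at $0$, so are $g$ and $\phi$, and it is enough to prove monotonicity on $(0,\infty)$. There I would substitute $v := g(u)$, a strictly increasing bijection of $(0,\infty)$ onto itself with $u = v + v^{q^*-1}$; then $\phi$ is non-decreasing in $u$ iff $\tilde\phi(v) := g\bigl(2v+v^{q^*-1}\bigr)-v$ is non-decreasing in $v$. Writing $w := g\bigl(2v+v^{q^*-1}\bigr)>0$, so that $w+w^{q^*-1} = 2v+v^{q^*-1}$, and using $g'(z)=1/(\hq{q^*})'(g(z))$, a direct differentiation yields
\[
\tilde\phi'(v) = \frac{1 + (q^*-1)\bigl(v^{q^*-2}-w^{q^*-2}\bigr)}{1 + (q^*-1)\,w^{q^*-2}}.
\]
The denominator is positive, so $\tilde\phi'\geq 0$ is equivalent to $(q^*-1)\bigl(w^{q^*-2}-v^{q^*-2}\bigr)\leq 1$. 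Moreover $w+w^{q^*-1}=2v+v^{q^*-1}$ together with the strict monotonicity of $\hq{q^*}$ forces $v < w < 2v$.

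It then remains to establish $(q^*-1)\bigl(w^{q^*-2}-v^{q^*-2}\bigr)\leq 1$, and this is where the hypothesis $q\geq\tfrac32$, i.e.\ $q^*\leq 3$, enters. If $q^*\leq 2$ (covering all $q\geq 2$), then $x\mapsto x^{q^*-2}$ is non-increasing and $w>v$, so the left-hand side is $\leq 0$ and there is nothing to prove. If $2<q^*\leq 3$ (covering $q\in[\tfrac32,2)$), I would rewrite the identity as $w^{q^*-1}-v^{q^*-1}=v-(w-v)$ and use convexity of $x\mapsto x^{q^*-1}$ to get $w-v\leq \tfrac{v}{1+(q^*-1)v^{q^*-2}}$; then, since $0<q^*-2\leq 1$, concavity of $x\mapsto x^{q^*-2}$ gives $w^{q^*-2}-v^{q^*-2}\leq(q^*-2)\,v^{q^*-3}(w-v)$, and combining the two bounds reduces the target to $(q^*-1)(q^*-3)\,v^{q^*-2}\leq 1$, which holds because $q^*-3\leq 0$. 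I expect this last case $2<q^*\leq 3$ to be the main obstacle: one must extract just enough from the implicit relation $w+w^{q^*-1}=2v+v^{q^*-1}$ to bound $w-v$ in terms of $v$, and it is precisely here that the threshold $q=\tfrac32$ turns out to be sharp.
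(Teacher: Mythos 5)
Your argument is correct and follows essentially the same route as the paper: both reduce the claim to the monotonicity of the one-dimensional map $p\mapsto \hqi{q^*}\bigl(\hq{q^*}(p)+p\bigr)-p$, differentiate using $g'=1/\hq{q^*}'\circ g$, exploit oddness to restrict to $p>0$, and arrive at the identical target inequality $(q^*-1)\bigl(w^{q^*-2}-v^{q^*-2}\bigr)\le 1$ under the constraint $w+w^{q^*-1}=2v+v^{q^*-1}$, with the same trivial case $q^*\le 2$. The one genuine difference is the endgame for $2<q^*\le 3$: the paper substitutes $\zeta=w/v\in[1,2]$ and asserts the resulting inequality \eqref{ube} ``by a detailed analysis,'' whereas you derive $w-v\le v/\bigl(1+(q^*-1)v^{q^*-2}\bigr)$ from convexity of $x\mapsto x^{q^*-1}$ and $w^{q^*-2}-v^{q^*-2}\le (q^*-2)v^{q^*-3}(w-v)$ from concavity of $x\mapsto x^{q^*-2}$, reducing everything to $(q^*-1)(q^*-3)v^{q^*-2}\le 1$, which is immediate for $q^*\le 3$. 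Your version therefore supplies a complete, self-contained justification of precisely the step the paper leaves to the reader, at no loss of generality.
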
 
\begin{proof}
Setting $z_1 = \frac{\yd}{\etan \lambda_n}$, $z_2 = \frac{y}{\etan \lambda_n}$, the statement is verified if we can show that 
for any $z_1,z_2$,
\[ 
\left[\hqi{q^*}\left(z_1 + \hqi{q^*}(z_1)\right) - \hqi{q^*}(z_1) -  \left(  \hqi{q^*}\left(z_2 + \hqi{q^*}(z_2)\right)- 
 \hqi{q^*}(z_2) \right) \right] \left[ \hqi{q^*}(z_1) - \hqi{q^*}(z_2)\right]  \geq 0. \] 
Since $\hqi{q^*}\left(z_1 + \hqi{q^*}(z_1)\right) = \hqi{q^*}\left(\hq{q^*}( \hqi{q^*}(z_1) )+ \hqi{q^*}(z_1)\right)$,
it is enough to show that the mapping (note the one-to-one correspondence of $z_i \mapsto \hqi{q^*}(z_i)$), 
\[ F: p \mapsto \hqi{q^*}\left(\hq{q^*}(p)+ p\right) - p , \] 
is monotonically increasing. As this function is differentiable everywhere except at $p =0$, it suffices to prove the inequality 
\[ 0 \leq F'(p) = \frac{2+ (q^*-1)|p|^{q^*-2} }{1 +(q^*-1)  | \hqi{q^*}\left(\hq{q^*}(p)+ p\right)|^{q^*-2}}-1, \] 
for any $p \in \mathbb{R}$. Since $F$ is antisymmetric and hence $F'$ is symmetric, it is sufficient to 
prove this inequality for $p >0$.  Setting $\vv = \hqi{q^*}\left(\hq{q^*}(p)+ p\right)$, we thus have to show that 
\[ \frac{2+ (q^*-1)|p|^{q^*-2} }{1 +(q^*-1)  |\vv|^{q^*-2}}\geq 1,  \qquad\mbox{where }  \hq{q^*}(\vv) = \hq{q^*}(p)+ p,\] 
which reduces to 
\[ 1 + (q^*-1)p^{q^*-2}  \geq (q^*-1)  \vv^{q^*-2} \qquad  \forall \vv>0,p>0, \mbox{ with } \vv + \vv^{q-1} = 2 p + p^{q-1}. \]
Some algebraic manipulation allows us to express $p^{q-2}$ in terms of $\zeta := \frac{\vv}{p}$ as 
\[ p^{q-2} = \frac{2 -\zeta}{\zeta^{q-1}-1}.  \] 
The right-hand side is monotonically decreasing for $\zeta \in [1,2]$, thus we may invert the equation and 
express $\zeta$ as function of $p$. Setting 
$\vv^{q^*-2} = p^{q^*-2} \zeta^{q^*-2}$, the  
inequality we would like to prove is then 
\begin{equation}\label{ube}     (q^*-1)  \frac{(2 -\zeta)(\zeta^{q^*-2} -1)}{\zeta^{q-1}-1}               \leq 1, \qquad \forall \zeta \in [1,2].\end{equation}
For $q^*\leq 2$, the left-hand side is clearly negative for $\zeta \in [1,2]$, and thus the inequality is trivially 
satisfied. For $q^*>2$,  it  can be verified 
by a detailed analysis that for 
$q^*-1\leq 2$,
 \[  (q^*-1)  \frac{(\zeta^{q^*-2} -1)}{\zeta^{q-1}-1} \leq 1,\]
 which proves \eqref{ube}.
 Note that the condition $q^* \leq 3 $ is equivalent to 
 $q \geq \frac{3}{2} $.
\end{proof}

%
%
%
\begin{theorem}\label{th:seven}
Let $A$ and $\mathcal{R}$ be as in Proposition~\ref{prophd}
with $\frac{3}{2}\leq q <\infty$. 
Let $\alpha_*$ be selected by the Hanke-Raus functional, 
let the source condition \eqref{source} be satisfied, $\delta^\ast\ne 0$ and
suppose that there  is a  constant $C_1$, which is sufficiently small  
and a constant 
$C_2$ such that for all $\yd$, it holds that 
	\begin{equation}
	\sum_{\{ n: \tfrac{\lambda_n^q}{\alpha} \max\{|\yn|,|\ydn|\}^{2-q}\geq C_1\}} 
	|\Dyn|^2
	\max\{|\yn|,|\ydn|\}^{q-2} \frac{\alpha}{\lambda_n^q} \leq 
	C_2 
		\sum_{\{n: \tfrac{\lambda_n^q}{\alpha} \max\{|\yn|,|\ydn|\}^{2-q}< C_1\}} 
          |\Dyn|^2.
	\label{hrmuckenhouptineqsimple}
	\end{equation}
Then 
\[ D_\xi(x^\delta_{\alpha_\ast},x^\dagger)\to 0\qquad \text{as } \delta\to 0. \]
\end{theorem}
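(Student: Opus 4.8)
The plan is to mirror the structure of the proof of Theorem~\ref{th:six}, reducing the ``simple'' Muckenhoupt-type inequality \eqref{hrmuckenhouptineqsimple} to the abstract inequality \eqref{hrmuckenhoupt} of Proposition~\ref{prophr}. Since $q \geq \frac{3}{2}$, the preceding lemma guarantees the positivity condition \eqref{posHR}, so Proposition~\ref{prophr} is applicable and it suffices to establish \eqref{hrmuckenhoupt} with index set $I_{\text{HR}}$; the convergence $D_\xi(x^\delta_{\alpha_\ast},x^\dagger)\to 0$ then follows from Theorem~\ref{th:three} together with the fact that $\alpha_\ast\to 0$ for the Hanke-Raus rule. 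The first step is therefore to produce, as in the proof of Theorem~\ref{th:six}, a convenient explicit subset $I_{\text{HR}}'\subset I_{\text{HR}}$ of the form $\{n: \tfrac{\lambda_n^q}{\alpha}\max\{|\yn|,|\ydn|\}^{2-q}< C_1\}$, so that the right-hand index set in \eqref{hrmuckenhouptineqsimple} matches the one needed in \eqref{hrmuckenhoupt}.

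First I would use the scaling formulas \eqref{padH} and the expression for $\padiin$ to write the ratio $R^{II}=\tfrac{\Dpiin+\Dpn}{\Dpn+\Dyn}$ in terms of $\hqi{q^*}$ evaluated at $x_1=\tfrac{\max\{|\yn|,|\ydn|\}}{\lambda_n\etan}$ and $x_2=\tfrac{\min\{\dots\}}{\lambda_n\etan}$, exactly as in the HD proof; WLOG $\yn>0$, $|\ydn|\leq\yn$. The membership condition $R^{II}\geq\theta$ in $I_{\text{HR}}$ translates, via the difference-quotient bounds of Lemma~\ref{usefullemma} applied to the outer $\hqi{q^*}$, into a lower or upper bound on $\hqi{q^*}(x_1)^{q^*-2}$, hence (using $\lambda_n\etan=(\alpha/\lambda_n^q)^{1/(2-q)}$ and \eqref{h1}) into a threshold condition of the form $\tfrac{\lambda_n^q}{\alpha}\max\{|\yn|,|\ydn|\}^{2-q}\lessgtr C_q$ for a constant $C_q$ depending on $\theta$ and $\dlo$. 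This is where the hypothesis that $C_1$ be \emph{sufficiently small} is used: we need $C_1\leq C_q$ so that $I_{\text{HR}}'\subset I_{\text{HR}}$. Since the left-hand side of \eqref{hrmuckenhoupt} only decreases when its index set shrinks and only increases the requirement when the right-hand index set shrinks, it then suffices to prove \eqref{hrmuckenhoupt} with $I_{\text{HR}}$ replaced by $I_{\text{HR}}'$, matching \eqref{hrmuckenhouptineqsimple}.

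Second, on the complement $I_{\text{HR}}'^{\,c}$ I would bound the summand $\realp{\Dpn}{\Dyn}-|\Dpn|^2$ above by $C|\Dyn|^2\max\{|\yn|,|\ydn|\}^{q-2}\tfrac{\alpha}{\lambda_n^q}$. This uses the upper bound $\tfrac{\Dpn}{\Dyn}\leq C[\max\{|\yn|,|\ydn|\}^{2-q}\tfrac{\lambda_n^q}{\alpha}]^{(q-2)}$ from Lemma~\ref{usefullemma} (as derived in \eqref{thi}; note $0\leq\Dpn\leq\Dyn$ so $|\Dpn|^2\geq0$ may simply be dropped on the left), and crucially that on the complement $\tfrac{\max\{|\yn|,|\ydn|\}}{\lambda_n\etan}\geq\tau$ so that the $q^*>2$ branch of Lemma~\ref{usefullemma} is in force. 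Combining, $\sum_{I_{\text{HR}}'^{\,c}}(\realp{\Dpn}{\Dyn}-|\Dpn|^2)\leq C\sum_{I_{\text{HR}}'^{\,c}}|\Dyn|^2\max\{\dots\}^{q-2}\tfrac{\alpha}{\lambda_n^q}$, which by \eqref{hrmuckenhouptineqsimple} is bounded by $C_2\sum_{I_{\text{HR}}'}|\Dyn|^2$. Finally I would re-absorb $\sum_{I_{\text{HR}}'}|\Dyn|^2$ into $\sum_{I_{\text{HR}}'}\realp{\Dyn}{\Dpn}$: on $I_{\text{HR}}'$ the ratio $\tfrac{\Dpn}{\Dyn}$ is bounded \emph{below} away from $0$ (from \eqref{anix}, since there $\tfrac{\lambda_n^q}{\alpha}\max\{\dots\}^{2-q}$ is bounded, hence $\hqi{q^*}(x_1)$ is bounded, hence the denominator $1+\dlo\hqi{q^*}(x_1)^{q^*-2}$ is bounded), giving $|\Dyn|^2\leq C\,\Dyn\Dpn$ on $I_{\text{HR}}'$. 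This yields \eqref{hrmuckenhoupt}.

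The main obstacle is the bookkeeping in the first step: unlike the HD case, where $R^{II}\geq 1/C$ unfolds cleanly because the quantity involves a single $\hqi{q^*}$, here $R^{II}$ is a difference quotient of the \emph{composed} map $p\mapsto\hqi{q^*}(\hq{q^*}(p)+p)-p$, so one must track how the two applications of Lemma~\ref{usefullemma} interact and verify that the resulting threshold $C_q$ is a genuine positive constant (independent of $n,\alpha$) and that ``$C_1$ sufficiently small'' is exactly the condition $C_1\le C_q$; one must also check that $\theta>\tfrac12$ survives this reduction. The monotonicity lemma ($q\ge\tfrac32$) is what makes the complement term on the right of \eqref{hrmuckenhoupt} droppable, so no further work is needed there.
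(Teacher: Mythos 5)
Your plan follows the paper's proof essentially step for step: reduce \eqref{hrmuckenhouptineqsimple} to \eqref{hrmuckenhoupt} via Proposition~\ref{prophr} (with positivity \eqref{posHR} supplied by the $q\ge\tfrac32$ lemma), carve out an explicit subset $I_{\text{HR}}'\subset I_{\text{HR}}$ from the lower bound of Lemma~\ref{usefullemma} with ``$C_1$ sufficiently small'' guaranteeing $\theta>\tfrac12$, bound the left-hand sum over the complement exactly as for the heuristic discrepancy rule, and bound the right-hand sum from below by $C\sum|\Dyn|^2$. The one obstacle you flag resolves more simply than you fear: writing $x_i=z_i+\hqi{q^*}(z_i)$ with $z_1=\yn/(\etan\lambda_n)$, $z_2=\ydn/(\etan\lambda_n)$, the ratio $R^{II}$ is a plain difference quotient of $\hqi{q^*}$ at these shifted arguments, so a single application of Lemma~\ref{usefullemma} together with $0\le\pan\le|\yn|$ converts the threshold $\max\{|\yn+\pan|,|\ydn+\padn|\}^{2-q}\lambda_n^q/\alpha\le C_q$ into one in terms of $\max\{|\yn|,|\ydn|\}$, which is exactly how the paper proceeds.
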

\begin{proof}
We verify that 	\eqref{hrmuckenhouptineqsimple} implies  \eqref{hrmuckenhoupt}. 
Again with similar considerations as before we may assume $|\ydn|\leq \yn$ and use 
$$x_1 = \tfrac{\yn}{\etan \lambda_n} + \hqi{q^*}(\tfrac{\yn}{\etan \lambda_n}), \qquad 
x_2 = \tfrac{\ydn}{\etan \lambda_n} + \hqi{q^*}(\tfrac{\ydn}{\etan \lambda_n}). $$
Then 
\begin{align*}
 \frac{\Dpiin+ \Dpn}
{\Dpn +\Dyn} 
= \frac{\hqi{q^*}(x_1) - \hqi{q^*}(x_2)}{x_1 -x_2} \geq 
\frac{1}{1 + \dlo  \hqi{q^*}(x_1)^{q^*-2}}.
\end{align*}
As before we may define a new index set $I_{\text{HR}}'$ of indices, where 
\[ \frac{\max\{|\yn +  \pan|,|\ydn +  \padn|\}}{\lambda_n \etan} 
\begin{cases} \leq  \hq{q^*}\left(\left(\frac{\frac{1}{\theta}-1}{\dlo}\right)^\frac{1}{q^*-2}\right),    
& q^*>2, \\ 
 \geq  \hq{q^*}\left(\left(\frac{\frac{1}{\theta}-1}{\dlo}\right)^\frac{1}{q^*-2}\right),   
& 1<q^*\leq2,  \end{cases} 
\]
such that 
\[ I_{\text{HR}}' = 
 \{ n \in \mathbb{N} : \tfrac{\lambda_n^q}{\alpha}\max\{|\yn +  \pan|,|\ydn +  \padn|\}^{2-q} \leq C_q  \}, \]
with $C_q =  \left[\hq{q^*}\left(\left(\frac{\frac{1}{\theta} -1}{\dlo}\right)^\frac{1}{q^*-2}\right) \right]^{2-q}$.
Since $ 0 \leq \pan \leq |\yn|$ (and similar for the $\delta$-part), we obtain 
\[ \max\{|\yn +  \pan|,|\ydn +  \padn|\}^{2-q} \leq \begin{cases} 
2^{2-q} \max\{|\yn|,|\ydn\}^{2-q},  & 1 < q <2, \\ 
 \max\{|\yn|,|\ydn\}^{2-q}, & q>2, \end{cases} \]
which yields a subset
\[ I_{\text{HR}}'' =  \{ n \in \mathbb{N} : \tfrac{\lambda_n^q}{\alpha}\max\{|\yn|,|\ydn|\}^{2-q} \leq C_q'  \}, \]
where  $C_q' = \max\{2^{2-q},1\} C_q$.  Note that the restriction $\theta >\frac{1}{2}$ can always 
be achieved by postulating that $C_q'$ is sufficiently  small. 
We observe that except for the constant $C_q'$, $I_{\text{HR}}''$ agrees with $I'_{\text{HD}}$; hence we may 
estimate the right-hand side from below by 
$C \sum_{n \in I_{\text{HR}}''} |\Dyn|^2$.  On the other hand, the left-hand side can 
be bounded from above exactly the same as for the heuristic discrepancy rule, which 
shows the sufficiency of 	\eqref{hrmuckenhouptineqsimple} for  \eqref{hrmuckenhoupt}. 
\end{proof} 
We remark that apart from the smallness condition on $C_1$, this is the same condition as for the 
heuristic discrepancy rule. This completely agrees with the linear theory ($q = 2$).

\subsubsection{The symmetric quasi-optimality rule}

\begin{proposition}\label{propqo}
	Let $A:\ell^q(\mathbb{N})\to\ell^2(\mathbb{N})$ be a diagonal operator and 
	let $\mathcal{R}=\frac{1}{q}\|\cdot\|^q_{\ell^q}$. 
Suppose that for all $\yn,\ydn$ and all $n \in \mathbb{N}$
\begin{equation}\label{posQO}
\realp{\Dpiin}{(\Dpn - \Dpiin)} \geq 0. \end{equation}	
Furthermore, let the condition
	\begin{align}\label{muckqo}
	\sum_{n\notin I_{\text{SQO}}}\realp{(\Dpn-\Dyn)}{\Dpn}
	 \leq  C \sum_{n\in I_{\text{SQO}}}\realp{(\Dpn-\Dpiin)}{\Dpiin},	\end{align}
	hold, where for some constant $C$
	\begin{align*} 
	I_{\text{SQO}}:=\big\{n\in\mathbb{N}:  |\Dpn-\Dyn| &\leq 
	C |\Dpn- \Dpiin|  \qquad \mbox {and } \\
	 |\Dpn|&\leq C |\Dpiin| \big\}.
	\end{align*}
	Then there exists a constant such that
	\[
	D_{\xi_\alpha}(x^\delta_\alpha,x_\alpha)\le C\frac{1}{\alpha}\langle \Dp-\Dpii,\Dpii\rangle,
	\]
	for all $\alpha\in(0,\alpha_\text{max})$ and $y^\delta\in Y$.
\end{proposition}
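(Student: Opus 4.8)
The plan is to follow, almost verbatim, the argument for the Hanke-Raus rule in Proposition~\ref{prophr}; the only new wrinkle is that the index set $I_{\text{SQO}}$ now carries two defining inequalities instead of one. By Lemma~\ref{Lemma1} together with \eqref{funccal}, the assertion reduces, in the diagonal setting, to the scalar inequality
\[
\sum_{n\in\mathbb{N}}\realp{\Dpn}{\Dyn}-\sum_{n\in\mathbb{N}}|\Dpn|^{2}\le C\sum_{n\in\mathbb{N}}\realp{(\Dpn-\Dpiin)}{\Dpiin},
\]
which is the componentwise rendering of $\langle\Dy-\Dp,\Dp\rangle\le C\langle\Dp-\Dpii,\Dpii\rangle$ (the sign flip between the diagonal $\Delta$-quantities and their abstract counterparts cancels pairwise in every bilinear term). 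Two elementary observations drive the rest. First, by \eqref{padH} the residual $\padn$ is, as a function of $\ydn$, an affine rescaling of $\hqi{q^*}$, which is monotone and $1$-Lipschitz (its inverse $\hq{q^*}$ has slope at least $1$); hence $\Dpn$ and $\Dyn$ have the same sign with $|\Dpn|\le|\Dyn|$, so that each summand on the left equals $|\Dpn-\Dyn|\,|\Dpn|$ and is nonnegative (the one-dimensional firm non-expansivity $\realp{\Dpn}{\Dyn}\ge|\Dpn|^{2}$). Second, by the hypothesis \eqref{posQO} each summand on the right equals $|\Dpn-\Dpiin|\,|\Dpiin|$ and is likewise nonnegative.

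Next I would split the left-hand sum along $I_{\text{SQO}}$ and its complement. On $I_{\text{SQO}}$ the two defining conditions $|\Dpn-\Dyn|\le C|\Dpn-\Dpiin|$ and $|\Dpn|\le C|\Dpiin|$ combine, via the factorisations just recorded, into
\[
|\Dpn-\Dyn|\,|\Dpn|\le C^{2}\,|\Dpn-\Dpiin|\,|\Dpiin|=C^{2}\,\realp{(\Dpn-\Dpiin)}{\Dpiin};
\]
summing over $I_{\text{SQO}}$ and then enlarging the index range to all of $\mathbb{N}$ (which only increases the right-hand side, whose terms are all nonnegative by \eqref{posQO}) bounds the $I_{\text{SQO}}$-part of the left-hand sum by $C\sum_{n\in\mathbb{N}}\realp{(\Dpn-\Dpiin)}{\Dpiin}$. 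The remaining part, the sum over $I_{\text{SQO}}^{c}$, is controlled directly by the Muckenhoupt-type hypothesis \eqref{muckqo}, whose right-hand side is again bounded by $C\sum_{n\in\mathbb{N}}\realp{(\Dpn-\Dpiin)}{\Dpiin}$ using \eqref{posQO}. Adding the two estimates gives the displayed scalar inequality, hence the claim.

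I do not expect a genuine obstacle inside this proposition: once the sign conventions and the componentwise firm non-expansivity are in place it is the same index-set decomposition as in Propositions~\ref{prophd} and~\ref{prophr}, and it is in fact a shade simpler than the Hanke-Raus case, since the estimate on $I_{\text{SQO}}$ is immediate from the defining inequalities and needs no rearrangement of ratios. The real work lies elsewhere and parallels the earlier subsections: first, establishing that the positivity condition \eqref{posQO} actually holds for $\mathcal{R}=\tfrac{1}{q}\|\cdot\|_{\ell^{q}}^{q}$ --- the analogue of the lemma giving $\realp{\Dpiin}{\Dpn}\ge 0$ for $q\ge\tfrac{3}{2}$, which should again reduce to the monotonicity of an explicit one-variable map built from $\hq{q^*}$ and $\hqi{q^*}$, presumably under a comparable restriction on $q$; and second, in the ensuing theorem, reducing \eqref{muckqo} to a clean Muckenhoupt-type inequality in $\yn,\ydn,\lambda_n,\alpha$ by estimating the ratios $\Dpn/(\Dpn-\Dyn)$ and $(\Dpn-\Dpiin)/\Dpiin$ through Lemma~\ref{usefullemma}, exactly as was done for the heuristic discrepancy and Hanke-Raus rules.
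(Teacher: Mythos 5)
Your proof is correct and follows essentially the same route as the paper's: reduce via Lemma~\ref{Lemma1} to the componentwise inequality, use one-dimensional firm non-expansivity and the positivity hypothesis \eqref{posQO} to factor both sides as products of absolute values, bound the $I_{\text{SQO}}$-part of the left-hand sum by the right-hand side through the two defining inequalities of $I_{\text{SQO}}$, and dispose of the complement via \eqref{muckqo} together with the nonnegativity of the omitted right-hand terms. Your write-up is in fact slightly more careful than the paper's about the sign conventions and about enlarging the index range on the right; no gap.
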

\begin{proof}
By Lemma~\ref{Lemma1}, it suffices to show that
\begin{equation}\label{blahqo}
\langle \Dp, \Dy-\Dp\rangle\le C\langle \Dp-\Dpii,\Dpii\rangle.
\end{equation}
By the nonnegativity of \eqref{posQO}
and monotonicity, we have 
\[ \realp{(\Dpn-\Dyn)}{
	\Dpn}
	 = 
	|\Dpn-\Dyn|
	|\Dpn|,
\]
and	
\[ \realp{(\Dpn-\Dpiin)}
	{\Dpiin}
	= 
	|\Dpn-\Dpiin|
	|\Dpiin|, \]
such that by the definition of 	$I_{\text{SQO}}$ 
the sum  over $I_{\text{SQO}}$ on the left-hand side of \eqref{blahqo} can be bounded 
by that on the right-hand side. The sum over the complement  of 
 $I_{\text{SQO}}$ on the right can be bounded by \eqref{posQO}
 from below by $0$ and the corresponding sum on the left is bounded 
 by condition \eqref{muckqo}, which proves the statement. 
\end{proof}

Similar as for the Hanke-Raus rule, we first have to verify 
the nonnegativity of certain expressions: 
\begin{lemma}
If $q\geq \frac{3}{2}$, then
\[ \realp{(\Dpn-\Dpiin)}{\Dpiin} \geq 0, 
	\]
for all $\alpha\in(0,\alpha_\text{max})$ and $\yn,\ydn\in \mathbb{R}$.	
\end{lemma}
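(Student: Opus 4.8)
The plan is to follow the template of the preceding Hanke--Raus nonnegativity lemma and reduce everything to a scalar monotonicity statement. First I would fix a coordinate $n$ and apply the scaling \eqref{padH} with $\etan=\gamma_n^{1/(2-q)}$, writing $z_1:=\tfrac{\ydn}{\etan\lambda_n}$ and $z_2:=\tfrac{\yn}{\etan\lambda_n}$. A direct computation from the formulas for $\padn$ and $\padiin$ then gives $\Dpiin=\lambda_n\etan\bigl(G(z_2)-G(z_1)\bigr)$ and $\Dpn-\Dpiin=\lambda_n\etan\bigl(H(z_2)-H(z_1)\bigr)$, where $G(z):=\hqi{q^*}\bigl(z+\hqi{q^*}(z)\bigr)-\hqi{q^*}(z)$ and $H(z):=2\hqi{q^*}(z)-\hqi{q^*}\bigl(z+\hqi{q^*}(z)\bigr)$. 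Consequently $\realp{(\Dpn-\Dpiin)}{\Dpiin}=(\lambda_n\etan)^2\bigl(G(z_2)-G(z_1)\bigr)\bigl(H(z_2)-H(z_1)\bigr)$, so it suffices to show that $G$ and $H$ are both monotonically non-decreasing on $\mathbb{R}$: then the two factors always carry the same sign.

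Next I would make the strictly increasing substitution $p=\hqi{q^*}(z)$, i.e.\ $z=\hq{q^*}(p)$, and set $s:=\hqi{q^*}\bigl(\hq{q^*}(p)+p\bigr)$, exactly as in the previous proof. This turns $G$ into $F(p)=s-p$ and $H$ into $p-F(p)=2p-s$, where $F$ is precisely the function already shown there to be monotonically non-decreasing under the hypothesis $q\geq\tfrac32$ (equivalently $q^*\leq3$); this is where that hypothesis is consumed. Hence $G$ is automatically non-decreasing, and the whole statement reduces to showing $\operatorname{id}-F$ non-decreasing. Since $F$, and therefore $\operatorname{id}-F$, is odd and continuous, it is enough to treat $p>0$, where $F$ is differentiable; there I would simply verify $F'(p)\leq1$. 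Invoking the expression $F'(p)=\tfrac{2+(q^*-1)p^{q^*-2}}{1+(q^*-1)s^{q^*-2}}-1$ from the earlier proof, $F'(p)\leq1$ is equivalent to the clean inequality $p^{q^*-2}\leq 2\,s^{q^*-2}$.

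The final step, which I expect to be the only delicate point, is a two-sided bound on $s$ in terms of $p$. From the defining relation $\hq{q^*}(s)=\hq{q^*}(p)+p$, i.e.\ $s+s^{q^*-1}=2p+p^{q^*-1}$, and the strict monotonicity of $t\mapsto t+t^{q^*-1}$, one immediately gets $s>p$. When $q^*\geq2$ this already yields $s^{q^*-2}\geq p^{q^*-2}$, so $F'(p)\leq1$. When $1<q^*<2$ the crude bound $s>p$ is not enough: here I would observe that $s^{q^*-1}>p^{q^*-1}$ forces $s=2p+p^{q^*-1}-s^{q^*-1}<2p$, hence $1<s/p<2$ and $(s/p)^{2-q^*}<2^{2-q^*}<2$, which is exactly $p^{q^*-2}<2\,s^{q^*-2}$. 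In either case $\operatorname{id}-F$ is non-decreasing on $(0,\infty)$, and by antisymmetry on all of $\mathbb{R}$; therefore $G$ and $H$ are non-decreasing and $\realp{(\Dpn-\Dpiin)}{\Dpiin}\geq0$. Apart from this sharp bound in the regime $q>2$, the whole argument is bookkeeping parallel to the Hanke--Raus lemma, and no new hard monotonicity fact is needed beyond the one already established there.
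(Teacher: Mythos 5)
Your proposal is correct and follows essentially the same route as the paper: both reduce the claim to the scalar statement $0\le F'(p)\le 1$ for $F(p)=\hqi{q^*}\bigl(\hq{q^*}(p)+p\bigr)-p$, reuse the Hanke--Raus lemma for $F'\ge 0$ (this is where $q\ge\tfrac32$ enters), and verify $F'\le 1$ via the bound $1\le \vv/p\le 2$, which gives $p^{q^*-2}\le 2\vv^{q^*-2}$ in both regimes $q^*\ge 2$ and $1<q^*<2$. Your explicit split into the two monotone maps $G$ and $\mathrm{id}-F$ is just a slightly more spelled-out version of the paper's "monotone and $1$-Lipschitz" argument.
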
 
\begin{proof}
Recall the mapping $F: \pan \mapsto \paiin$ defined above. 
In order to prove monotonicity, it is enough to show that  
\[  \left((p_1 - F(p_1))  - (p_2 - F(p_2) \right)( F(p_1)-  F(p_2)) \geq 0, \qquad \forall p_1, p_2, \] 
or 
\[  \left((p_1-p_2\right)( F(p_1)-  F(p_2)) \geq  |F(p_1)-  F(p_2))|^2,\qquad \forall p_1, p_2. \] 
Thus, if $F$ would be  monotone and Lipschitz continuous, then the inequality would hold. 
Thus, we require the condition that 
\[ 0 \leq F'(p) \leq 1, \qquad \forall p. \] 
In the notation of above, this means that 
\[ \begin{array}{lcrr} 
1 + (q^*-1)p^{q^*-2}  &\geq& (q^*-1)  \vv^{q^*-2} &\mbox{and} \\[3mm] 
(q^*-1)p^{q^*-2}  &\leq& 2 (q^*-1)  \vv^{q^*-2} \end{array} 
\qquad  \forall \vv>0\text{ and }p>0 \mbox{ with } \vv + \vv^{q-1} = 2 p + p^{q-1}. \]
In terms of $\zeta \in [1,2]$ this means that, in addition to the condition for the positivity of the Hanke-Raus rule, we require that
\[ \zeta^{q^*-2}\geq \frac{1}{2}. \]
In particular, this inequality is satisfied for any $q^*\geq 1$, which shows the result. 
\end{proof}

\begin{theorem}\label{th:eight}
Let $A$ and $\mathcal{R}$ be as in Proposition~\ref{propqo} with 
$\frac{3}{2}\leq q <\infty$. 
 Let $\alpha_*$ be selected by the symmetric quasi-optimality 
rule, 
let the source condition \eqref{source} be satisfied, $\delta^\ast\ne 0$ and
suppose that there  is a  constant $C_1$, which is sufficiently small  and  constants 
$C_2, C_3$ such that for all $\ydn$, it holds that 
	\begin{equation}
	\begin{split}
	\sum_{\{ n: \tfrac{\lambda_n^q}{\alpha} \max\{|\yn|,|\ydn|\}^{2-q}\geq C_1\}} 
	|\Dyn|^2
	\max\{|\yn|,|\ydn|\}^{q-2} \frac{\alpha}{\lambda_n^q} 
	+ 
		\sum_{\{ n: \tfrac{\lambda_n^q}{\alpha} \max\{|\yn|,|\ydn|\}^{2-q}\leq C_1\} \cap I_{2}^{c}} 
		          |\Dyn|^2	\\
	\leq 	C_2 
		\sum_{\{n: \tfrac{\lambda_n^q}{\alpha} \max\{|\yn|,|\ydn|\}^{2-q}< C_1\}\cap I_{2}} 
         \left[\frac{\lambda_n^q \max\{|\yn|,|\ydn|\}^{2-q}}{\alpha} \right]^\frac{1}{q-1} |\Dyn|^2,
          \end{split} 
%
	\label{qrrmuckenhouptineqsimple}
	\end{equation}
where for some constant $C_3$,  
\[ I_2 =\big\{n\in\mathbb{N}:  |\Dpn-\Dyn| \leq 
	C_3 |\Dpn-\Dpiin|  \big \}.  \]
Then 
\[ D_\xi(x^\delta_{\alpha_\ast},x^\dagger)\to 0\qquad \text{as } \delta\to 0. \]
\end{theorem}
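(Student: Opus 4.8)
The plan is to follow the template already used in the proofs of Theorems~\ref{th:six} and \ref{th:seven}: combine the general convergence result for the symmetric quasi-optimality rule with the diagonal reduction, so that the only remaining task is to check that the concrete Muckenhoupt-type condition \eqref{qrrmuckenhouptineqsimple} implies the abstract one \eqref{muckqo}. Concretely, by Theorem~\ref{th:four} convergence holds as soon as the auto-regularisation condition \eqref{noiseconditionquasiopt} is valid; by Proposition~\ref{propqo} that condition follows from \eqref{posQO} together with \eqref{muckqo}; and \eqref{posQO} is supplied by the preceding lemma precisely because $q\ge\frac32$ (equivalently $q^*\le3$). Hence the whole proof reduces to the implication \eqref{qrrmuckenhouptineqsimple}$\Rightarrow$\eqref{muckqo}, and the other ingredients of the Bregman‑triangle estimate \eqref{bregmantriangle} are handled verbatim as before.

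To establish that implication I would argue exactly as in the Hanke--Raus case. Since the ratios entering the definition of $I_{\text{SQO}}$ are invariant under interchanging $\yn,\ydn$ and under simultaneous sign change, one may assume $\yn>0$ and $|\ydn|\le\yn$. Using the explicit representations \eqref{padH} and the formula for $\padiin$ in terms of the composed map $r\mapsto F(r)=\hqi{q^*}(\hq{q^*}(r)+r)-r$, I would express $\frac{\Dpiin}{\Dpn}$ as a difference quotient of $F$ and $\frac{\Dpn-\Dpiin}{\Dpn-\Dyn}$ in terms of difference quotients of $\hqi{q^*}$ at the shifted arguments $\tfrac{\yn}{\etan\lambda_n}+\hqi{q^*}(\tfrac{\yn}{\etan\lambda_n})$, $\tfrac{\ydn}{\etan\lambda_n}+\hqi{q^*}(\tfrac{\ydn}{\etan\lambda_n})$, and bound both two-sidedly via Lemma~\ref{usefullemma} (applied to the composite map where needed, as in the proof of Theorem~\ref{th:seven}). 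As there, the elementary bound $0\le\pan\le|\yn|$ lets the shifted arguments be replaced by $\max\{|\yn|,|\ydn|\}$ up to constants. This turns the second defining inequality of $I_{\text{SQO}}$, namely $|\Dpn|\le C|\Dpiin|$, into a threshold condition $\tfrac{\lambda_n^q}{\alpha}\max\{|\yn|,|\ydn|\}^{2-q}\le C_q$, while the first inequality $|\Dpn-\Dyn|\le C|\Dpn-\Dpiin|$ is kept verbatim as the set $I_2$; one thereby obtains a subset $I_{\text{SQO}}'\subseteq I_{\text{SQO}}$ on which $(\Dpn-\Dpiin)\Dpiin$ is bounded below by $c\,[\tfrac{\lambda_n^q\max\{|\yn|,|\ydn|\}^{2-q}}{\alpha}]^{\frac1{q-1}}|\Dyn|^2$, and on whose complement $(\Dpn-\Dyn)\Dpn$ is bounded above by $C|\Dyn|^2$ on the piece $\{\tfrac{\lambda_n^q}{\alpha}\max\{|\yn|,|\ydn|\}^{2-q}<C_1\}\cap I_2^c$ and by $C|\Dyn|^2\max\{|\yn|,|\ydn|\}^{q-2}\tfrac{\alpha}{\lambda_n^q}$ on the far piece $\{\tfrac{\lambda_n^q}{\alpha}\max\{|\yn|,|\ydn|\}^{2-q}\ge C_1\}$ — exactly the three sums in \eqref{qrrmuckenhouptineqsimple}. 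Since (as in the proof of Proposition~\ref{propqo}) every term on the right-hand side of \eqref{muckqo} is nonnegative and every term on the left-hand side is nonnegative, shrinking $I_{\text{SQO}}$ to $I_{\text{SQO}}'$ only strengthens the inequality; a brief consideration like the one after \eqref{anix} in the proof of Theorem~\ref{th:six} then shows that \eqref{qrrmuckenhouptineqsimple} implies \eqref{muckqo}, and together with the first paragraph this yields $D_\xi(x^\delta_{\alpha_\ast},x^\dagger)\to0$.

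The step I expect to be the genuine obstacle is the two-sided control of $\Dpiin$, the residual difference of the second Bregman iterate. Because $\padiin$ involves a true self-composition of $\hqi{q^*}$, the one-step estimate of Lemma~\ref{usefullemma} is not directly applicable and must be chained through the map $F$; moreover one needs not merely $0\le F'\le1$ (which already forced $q^*\le3$ in the positivity lemma) but a quantitative \emph{lower} bound on the difference quotient of $F$, and it is precisely this lower bound that produces the extra exponent $\tfrac1{q-1}$ in the weight on the right-hand side of \eqref{qrrmuckenhouptineqsimple} — the feature distinguishing the symmetric quasi-optimality condition from the cleaner conditions \eqref{hdmuckenhouptineqsimple} and \eqref{hrmuckenhouptineqsimple}. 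Making this bound precise will require a case distinction in $q$ (or $q^*$) analogous to, but more delicate than, the one carried out in Lemma~\ref{usefullemma}, and this is where the hypothesis $q\ge\frac32$ enters a second time.
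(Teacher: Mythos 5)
Your skeleton is the paper's: reduce everything to the implication \eqref{qrrmuckenhouptineqsimple}$\Rightarrow$\eqref{muckqo} via Theorem~\ref{th:four} and Proposition~\ref{propqo}; obtain the nonnegativity \eqref{posQO} from the preceding lemma (this is where $q\ge\tfrac32$ is used); convert the second defining inequality of $I_{\text{SQO}}$, $|\Dpn|\le C|\Dpiin|$, into the threshold condition $\tfrac{\lambda_n^q}{\alpha}\max\{|\yn|,|\ydn|\}^{2-q}<C_1$ while keeping the first as $I_2$; and split the left-hand side of \eqref{muckqo} into exactly the two sums appearing on the left of \eqref{qrrmuckenhouptineqsimple}, bounded as in the HD/HR proofs. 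All of that matches the paper.

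The gap is at the decisive step, which you yourself flag as the ``genuine obstacle'' and leave unexecuted: the lower bound $(\Dpn-\Dpiin)\,\Dpiin\ \ge\ c\,\bigl[\lambda_n^q\max\{|\yn|,|\ydn|\}^{2-q}/\alpha\bigr]^{1/(q-1)}|\Dyn|^2$ on $I_{\text{SQO}}$. You propose to extract it from a quantitative \emph{lower} bound on the difference quotient of the composed map $F:p\mapsto\hqi{q^*}(\hq{q^*}(p)+p)-p$, a new and delicate piece of analysis that you do not supply. No such bound is needed. On $I_{\text{SQO}}$ the two defining inequalities, combined with \eqref{posQO} and monotonicity, give directly
\[ (\Dpn-\Dpiin)\,\Dpiin=|\Dpn-\Dpiin|\,|\Dpiin|\ \ge\ C^{-2}\,|\Dpn-\Dyn|\,|\Dpn|, \]
so the second Bregman iterate is eliminated from the lower bound altogether. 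What remains involves only first-iterate residuals and is controlled by the already-established one-step estimate \eqref{anix} for $\Dpn/\Dyn$, together with the elementary bound $\hqi{q^*}\bigl(z^{1/(2-q)}\bigr)^{q^*-2}\ge C'z^{1/(q-1)}$ valid on the set where $z\le C_1$; this, and not any property of $F$, is the source of the exponent $\tfrac1{q-1}$. Accordingly, your expectation that $q\ge\tfrac32$ ``enters a second time'' at this point is not borne out: it is needed only for the nonnegativity lemma. As written, your argument is missing this reduction, and the substitute route you sketch is left unproven.
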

\begin{proof} 
Similar as for the Hanke-Raus rule, 
the second condition in the definition of $I_{\text{SQO}}$,
\[	 |\Dpn|\leq C |\Dpiin|,
\]
holds true if  
\begin{equation}\label{xqz} 
\tfrac{\lambda_n^q}{\alpha} \max\{|\yn|,|\ydn|\}^{2-q}< C_1.
\end{equation}
The sum over this set on the left-hand side  of  
\eqref{muckqo} can be bounded in the same way as before 
by the first sum in \eqref{qrrmuckenhouptineqsimple}. 
The second sum is an upper bound for 
the sum on the left-hand side  in the complement 
of $I_{\text{SQO}}$, where \eqref{xqz} is not satisfied. 
On $I_{\text{SQO}}$, we have the estimate that 
\begin{align*}
&\realp{(\Dpn-\Dpiin)}{
	\Dpiin} \geq 
    |\Dpn-\Dyn|
	|\Dpn|
	\\ 
&\geq \realp{(\Dpn-\Dyn)}{\Dyn}
\geq |\Dyn|^2 \frac{\Dpn}
{\Dyn} \geq C 
 \hqi{q^*}\left(\frac{\max\{|\yn|,|\ydn|\}}{\lambda_n \etan}\right)^{q^*-2},\\
 & = 
  C \hqi{q^*}\left(\left[\frac{\lambda_n^q \max\{|\yn|,|\ydn|\}^{2-q}}{\alpha} \right]^\frac{1}{2-q}\right)^{q^*-2} \geq C'
   \left[\frac{\lambda_n^q \max\{|\yn|,|\ydn|\}^{2-q}}{\alpha} \right]^\frac{1}{q-1},
\end{align*} 
where we used \eqref{anix} in the estimate where $C$ appears and 
in the last step 
a bound for $z>0$ on $I_{\text{SQO}}$ of the form
\[ \hqi{q^*}\left(z^\frac{1}{2-q}\right)^{q^*-2} \geq C' z^\frac{1}{q-1}, \]
that can be obtained by similar means as above. 
\end{proof} 
\begin{remark}
The condition in \eqref{qrrmuckenhouptineqsimple} has an additional sum 
over the index set $I_{HR}'' \cap I_{2}^{c}$ on the left-hand side.  It might be possible to prove 
that this set is empty, e.g., if $I_{HR}'' \subset I_2$. Then the corresponding  sum would vanish, and this happens 
in the linear case ($q=2$). However, we postpone a
more detailed analysis of this issue to the future. 

We also point out that the Muckenhoupt-type conditions 
\eqref{hdmuckenhouptineqsimple},
\eqref{hrmuckenhouptineqsimple}, and 
\eqref{qrrmuckenhouptineqsimple} (except for the additional sum) 
agree with the respective ones 
for the linear case $q=2$ so that they 
appear, in fact, as natural extensions of the linear convergence theory.
\end{remark}

\subsection{Case study of noise restrictions}
For the cases that the operator 
ill-posedness, the regularity of the 
exact solution and the noise show some 
typical behaviour, we investigate 
the restrictions that the 
Muckenhoupt-type conditions 
\eqref{hrmuckenhouptineqsimple} and
\eqref{hdmuckenhouptineqsimple} impose on the 
noise. In particular, we would like to 
point out that the restrictions are not at all
unrealistic and they are satisfied 
in paradigmatic situations. 

Consider a polynomially ill-posed problem, 
\[ \lambda_n = \frac{D_1}{n^{\beta}},  \qquad \beta >0, \] 
where the exact data have a higher 
decay compared to $\lambda_n$  as a result of 
the regularity of the exact solution, 
\[ |\yn| = \frac{D_2}{n^{\nu}},  \qquad \nu >\beta.  \]
We furthermore assume a standard polynomial decay
of the error terms: 
\[ \Dyn = \delta s_n \frac{1}{n^{\kappa}}, \qquad 0 <\kappa <  \nu, \quad s_n \in \{-1,1\}  \] 
The restriction $\kappa < \nu$ is natural 
as the noise is usually less regular than the 
exact solution.  In the linear case, 
Muckenhoupt-type conditions 
lead  to restrictions on the regularity of 
the noise, i.e., upper bounds for 
the decay rate $\kappa$. This is perfectly 
in line with their interpretation as
conditions for sufficiently irregular noise.

In the following we write 
$\sim$ if the left and right expressions 
can be estimated by constants independent of 
$n$. (There might be a $q$-dependence, however).

Let us define the following weight
that appears in \eqref{hrmuckenhouptineqsimple} and
\eqref{hdmuckenhouptineqsimple}:
\begin{align*} 
W_n&:= 	\max\{|\yn|,|\ydn|\}^{2-q}{\lambda_n^q} = 
	\max\{1,|\ydn|/|\yn|\}^{q-2} |\yn|^{2-q}\lambda_n^q \\
&\sim  \frac{1}{ n^{\beta q + \nu(2-q)}}  
\max\{1, |1 + \frac{s_n \delta}{C_2} 
n^{\nu -\kappa}|\}^{2-q}. 
\end{align*} 
We additionally 
impose the restriction that for sufficiently large 
$n$, $W_n\to0$ monotonically. 
If $2-q>0$, this is trivially satisfied, while for 
$2-q<0$, we require that  
\begin{equation}\label{help2}\beta q + \kappa (2-q) > 0, \qquad 
\text{if } 2-q<0. \end{equation}
Under these assumptions, for 
any $\alpha$ sufficiently small, we find an $\ns$ such that 
$W_{\ns} = C_1 \alpha$ and $W_n \leq C_1 \alpha $ for $n\geq \ns$. Expressing $\alpha$ in terms of 
$W_{\ns}$ yields a  
sufficient condition for \eqref{hrmuckenhouptineqsimple},
\eqref{hdmuckenhouptineqsimple} as 
\begin{equation}\label{help1}
    W_{\ns} \sum_{n=1}^\ns 
    \frac{ |\Dyn|^2}{ W_n} \leq C  
    \sum_{n = \ns+1}^\infty   |\Dyn|^2
    \sim \frac{1}{\ns^{2 \kappa -1}}.
\end{equation}
By the straightforward estimate 
$\max\{1, |1 + \frac{s_n \delta}{C_2} 
n^{\nu -\kappa}|\} \sim 
1 + \delta n^{\nu -\kappa}$, 
the inequality \eqref{help1} reduces to 
\begin{equation}
  \frac{(1 + \delta \ns^{\nu -\kappa})^{2-q}}{ \ns^{\beta q + \nu(2-q)-2 \kappa}}   \sum_{n=1}^\ns 
  \frac{n^{\beta q + \nu(2-q)-2 \kappa}}
  {(1 + \delta n^{\nu -\kappa})^{2-q}}
    \leq C \ns.
\end{equation}
For any $x \geq 0$ and $0 \leq z \leq 1$, it holds that 
\[ 1 \leq  \frac{1 + x }{1 +  z x }  \leq \frac{1}{z}. \]
We use this inequality with 
$z = \frac{n}{n^*}$ and $x = \delta \ns$.
Then, we obtain the sufficient conditions 
\begin{align*} 
\begin{dcases}
  \frac{1}{ \ns^{\beta q + \nu(2-q)-2 \kappa
  -  (2-q)(\nu-\kappa)}}   \sum_{n=1}^\ns 
  {n^{\beta q + \nu(2-q)-2 \kappa -
  (2-q)(\nu-\kappa)}}
    \leq C \ns, & 2-q>0, \\
  \frac{1}{ \ns^{\beta q + \nu(2-q)-2 \kappa}}   \sum_{n=1}^\ns 
  {n^{\beta q + \nu(2-q)-2 \kappa}}\leq 
   C \ns,
    & 2 - q<0.
\end{dcases}
\end{align*}
These inequalities are satisfied if the 
exponent for $n$ is strictly larger than $-1$. This finally
leads to the restrictions 
\[ 
\begin{dcases}
\kappa \leq \beta + \frac{1}{q}, & 
q<2, \\ 
\kappa \leq \frac{q}{2}\beta +
\frac{2-q}{q} \nu +\frac{1}{2},
& q>2. 
\end{dcases} \]
Note that for $q>2$, we additionally require 
\eqref{help2}.

We hope to have the
reader convinced that the imposed conditions
on the noise are not too restrictive and, 
in particular, the set of noise that 
satisfies them is nonempty. These 
conditions provide a hint for which cases 
the methods may work or fail:

In case $\frac{3}{2} \leq q \leq 2$, 
both the heuristic discrepancy and the 
Hanke-Raus are reasonable rules. The conditions
on the noise are less restrictive the smaller 
$q$ is. 

In case  $1< q < \frac{3}{2}$, our convergence 
analysis only applies to the heuristic discrepancy rule, 
as the nonnegativity 
condition of the Hanke-Raus rule is not satisfied in this case. It could be said that the 
heuristic discrepancy rule is the more robust one then. 

In case $q>2$, we observe that the restriction 
on the noise depends on the regularity of 
the exact solution. For highly regular exact solutions 
($\nu \gg 1$) the noise condition 
might fail to be satisfied as $q$ becomes very large. 
This happens for both the heuristic discrepancy, 
and the Hanke-Raus rules. 

We did not include the quasi-optimality 
condition in this analysis as this still requires 
further analysis. However, the conditions 
for it are usually even more restrictive than 
for the Hanke-Raus rules and we expect similar 
problems for the case $q>2.$

\section{Numerical experiments}\label{sec:four}
In this section, we would like to numerically illustrate and verify the theoretical findings of the preceding sections. 
It should be stressed that the preceding convergence analysis provides
an important piece in understanding the behaivour, but there exist further 
factors that influence the actual quality of the results using 
heuristic rules (e.g., for optimal-order results, a regularity 
condition on $x^\dagger$ is often required and the value of the constants
in the estimates are important). It should be noted as well that 
our results only proved sufficient conditions for convergence 
and they do not say when a certain method may fail.
(For instance, by including $\delta$-dependent estimates, one may 
find weaker convergence conditions that hold for certain ranges 
of the noise level). Still, a preliminary understanding can 
be gained from this. To further illustrate the behaviour, we 
perform numerical experiments that we describe in this section.


In all experiments, we consider $\mathcal{R}=\frac{1}{q}\|\cdot\|^q_{\ell^q}$ in the discretised space $\mathbb{R}^n$. Due to numerical errors resulting from  the discretisation, we opt to choose the parameter $\alpha_\ast\in[\alpha_\text{min},\alpha_\text{max}]$. We also choose to select $\alpha_\text{max}=\|A\|^2$ (apart from for TV regularisation, which we will define) and for the more tricky issue of the lower bound, we set $\alpha_\text{min}=\sigma_{\text{min}}$, the smallest singular value of $A^\ast A$. Other methodologies for selecting $\alpha_\text{min}$ were suggested in \cite{kinderdiscrete,raiksemiheuristic}. Because of several effects,
it may happen that the heuristic functionals exhibit multiple local minima and selecting $\alpha_\ast$ is no longer an obvious task. For this reason, we select $\alpha_\ast$ as the interior global minimum within the aforementioned interval. Additionally, we always rescale the forward operator and exact solution so that $\|A\|=\|x^\dagger\|=1$. In each experiment, we compute 10 different realisations of the noise in which the noise level is logarithmically increasing. We also compute the error (the measure for which will differ for the various regularisation schemes) induced by each parameter choice rule, as well as the optimal parameter choice, which will be computed as the minimiser of the respective error functional itself.

For our operators, we use the tomography operator (\texttt{tomo}) from \emph{Hansen's tools} (cf.~\cite{hansentools}) with $n=625$ and $\texttt{f}=1$. We also define a diagonal matrix $A\in\mathbb{R}^{n\times n}$ with eigenvalues $\lambda_i=C\frac{1}{i^\beta}$, exact solution $x^\dagger=C\cdot s_i\frac{1}{i^\nu}$ and data perturbed by noise $e_i=C\mathcal{N}(1,0)\frac{1}{i^\kappa}$, where $s_i\in\{-1,1\}$ are random and set the parameters as $n=20$, $\beta=4$, $\nu=2$ and $\kappa=1$.

\subsection{{$\ell^1$} regularisation}
A particularly interesting application of convex variational Tikhonov regularisation is the case in which $q=1$, since it is sparsity enforcing. In fact, it is the most sparsity enforcing regularisation method whilst still remaining a convex regularisation problem.
Significant work in the area of sparse regularisation includes \cite{sparse,sparsityconstraints,ramlau2010sparse}. Whilst it does not fit with the Muckenhoupt-type conditions we derived earlier, it is nevertheless an interesting regularisation scheme for the practitioner who would be eager to see the performance of the studied rules. Note that in this case, we minimise the Tikhonov and Bregman functionals using FISTA (cf.~\cite{beckfista}). The corresponding proximal mapping operator is the soft thresholding operator. Note that in this experiment, we use the tomography operator defined above.

The solution $x^\dagger$ in our customised experiment is chosen to be sparse. For each parameter choice rule, we compute the error as
\[
\operatorname{Err}_{\ell^1}(\alpha_\ast)=\|x^\delta_{\alpha_\ast}-x^\dagger\|_{\ell^1}.
\]
\begin{figure}[H]
	\centering
	\includegraphics[scale=.75]{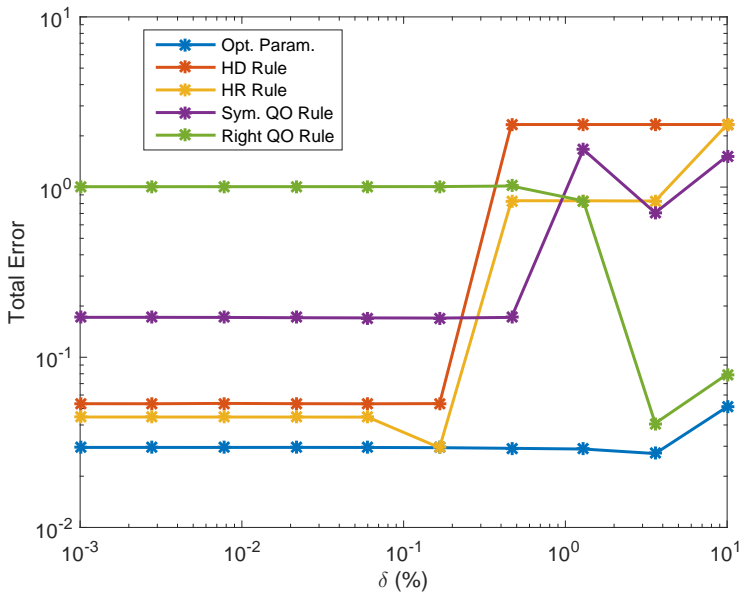}
	\caption{$\ell^1$ regularisation, tomography operator}	\label{l1TotErr}
\end{figure}

We may observe in Figure~\ref{l1TotErr} that, for smaller noise levels, the Hanke-Raus rule appears to be the best performing with the heuristic discrepancy rule performing similarly. The right quasi-optimality rule is particularly subpar for smaller noise levels, whilst for larger noise levels, it appears to be the best performing in fact, whereas the Hanke-Raus and heuristic discrepancy rules take a dip in performance. Indeed, we note that in various other sets of experiments we ran, instances were observed in which the heuristic discrepancy rule slightly trumped the Hanke-Raus rule.

\subsection{$\ell^{\frac{3}{2}}$ regularisation}
An interesting case for the purposes illustrating our theory is when $q=\frac{3}{2}$. Additionally, as with the previous regularisation, we have an analytic formula for the proximal mapping operator corresponding to the regularisation functional. In this scenario, we use the diagonal operator defined above with the given parameters and we compute the error with the Bregman distance; namely,
\[
\operatorname{Err}_{\ell^q}(\alpha_\ast)=D_{\xi}(x^\delta_{\alpha_\ast},x^\dagger),\qquad q\in(1,\infty).
\]
\begin{figure}[H]
	\centering
	\includegraphics[scale=.75]{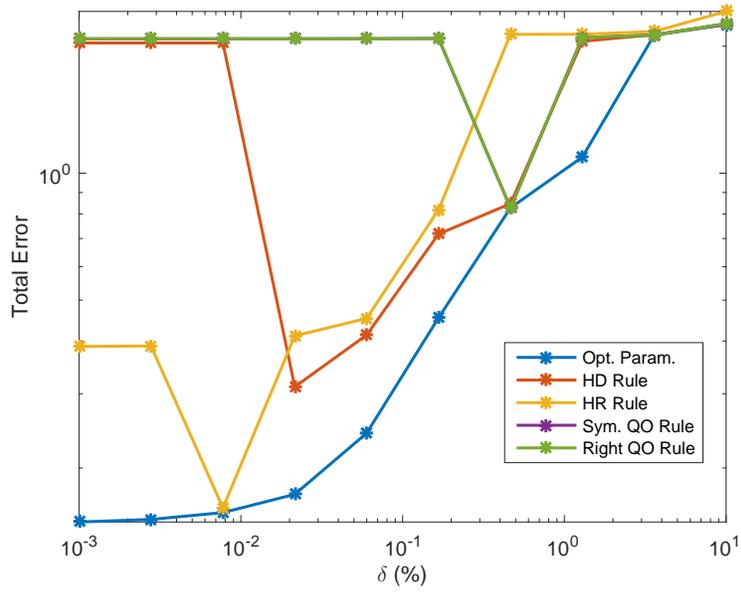}
	\caption{$\ell^{\frac{3}{2}}$ regularisation, diagonal operator}	\label{l32TotErr}
\end{figure}
Observe in Figure~\ref{l32TotErr} that, as in the previous experiment, the Hanke-Raus rule is the best performing one in case that the noise level is relatively small, although for mid-range noise levels, the heuristic discrepancy rule performs slightly better and for larger noise levels still, the quasi-optimality rules match the heuristic discrepancy rule. Note that the quasi-optimality rules appear indistinguishable in this plot and we remark too that the plots of their respective functionals were very similar (see Figure ~\ref{l32Examp}).

\begin{figure}[H]
	\centering
	\includegraphics[scale=.75]{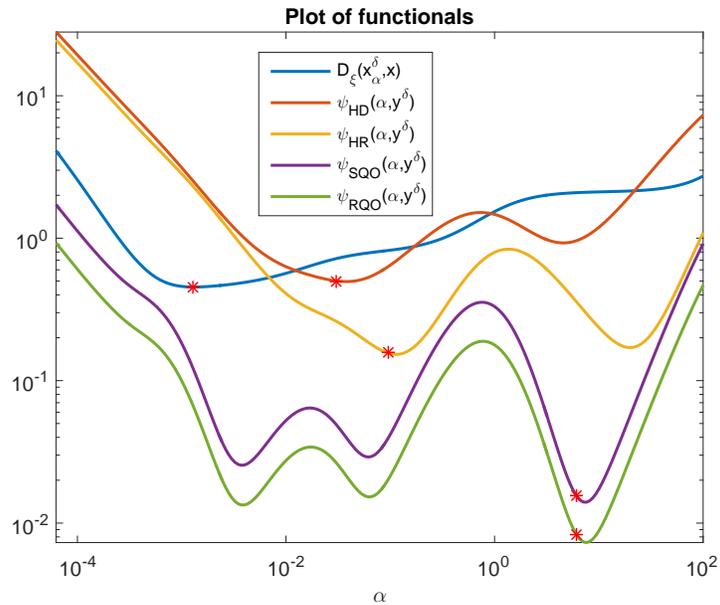}
	\caption{$\ell^{\frac{3}{2}}$ regularisation, plot of functionals}	\label{l32Examp}
\end{figure}
The relatively poor performance of the quasi-optimality rules may be explained by a simple observation of Figure~\ref{l32Examp}. In particular, one may notice that the selected minimisers of the quasi-optimality functionals are suboptimal. Note that they are selected via our procedure to choosing the interior global minimum. Indeed, if the other local minima were selected
(e.g. those left of $\alpha=10^{-2}$), then the results would be much improved. This is a common phenomena in many of our experiments involving the diagonal operator with $q=\frac{3}{2}$. Indeed, we observe that the HD and HR functionals oscillate as well, although in Figure~\ref{l32Examp}, at least, the correct minimisers were chosen.

\subsection{$\ell^3$ regularisation}
Based on the Muckenhoupt-type conditions in the preceding sections, we postulated that for $q>2$, the parameter choice rules we consider are likely to face mishaps. Consequently, we have elected to run a numerical experiment with $q=3$ in order to illustrate what happens in practice. As in the previous experiment, we consider the diagonal operator and compute the error induced by the parameter choice rules as before.

\begin{figure}[H]
    \centering
    \includegraphics[scale=.75]{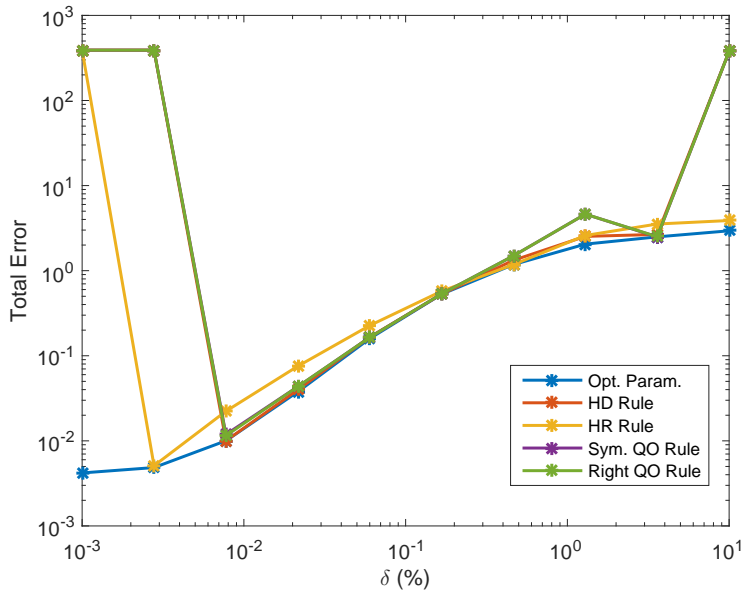}
    \caption{$\ell^3$ regularisation, diagonal operator}
    \label{l3TotErrDiag}
\end{figure}
In Figure~\ref{l3TotErrDiag}, all the rules appear to perform poorly in case the noise level is very small, but perform well overall thereafter, barring the case where the noise level is 10\%, in which case the Hanke-Raus rule is the only one which produces a reasonable error relative to the optimal parameter choice. Note that in the plots of the functionals themselves, we observed that for certain noise levels, some of the functionals did not exhibit reasonable minima.

\subsection{TV regularisation}

Selecting
\[
\mathcal{R}(x):=\sup_{\substack{\phi\in C^\infty_0(\Omega;\mathbb{R}^n)\\\|\phi\|_\infty\le 1}}\int_{\Omega}x(t)\operatorname{div}\phi(t)\,\mathrm{d}t,
\]
with $\text{div}$ denoting the divergence and $\Omega\subset\mathbb{R}^n$ an open subset, yields total variation (TV) regularisation. For the numerical treatment and 
for functions on the real line, 
 this is often discretised as $\mathcal{R}=\sum\|\Delta x\|_{\ell^1}$ with 
 a (e.g. forward) difference operator $\Delta$. For our numerical implementation, we used the FISTA algorithm with the proximal mapping operator for the total variation functional being computed using a fast Newton-type method, courtesy of the code provided by \cite{proxtv1,proxtv2}.
 
 Note that in this case, we choose $\alpha_\text{max}$ such that $\|x^\delta_{\alpha_\text{max}}\|\le C$ for a reasonable constant. Moreover, for each parameter choice rule, we compute the error as
 \[
 \operatorname{Err}_{\text{TV}}(\alpha_\ast)=|\mathcal{R}(x^\delta_{\alpha_\ast})-\mathcal{R}(x^\dagger)|+\frac{1}{\alpha_\ast}\|Ax^\delta_{\alpha_\ast}-y^\delta\|^2,
 \]
which was suggested in \cite{kindermathehofmanntv}. In this instance, we consider the tomography operator.

\begin{figure}[H]
	\centering
	\includegraphics[scale=.75]{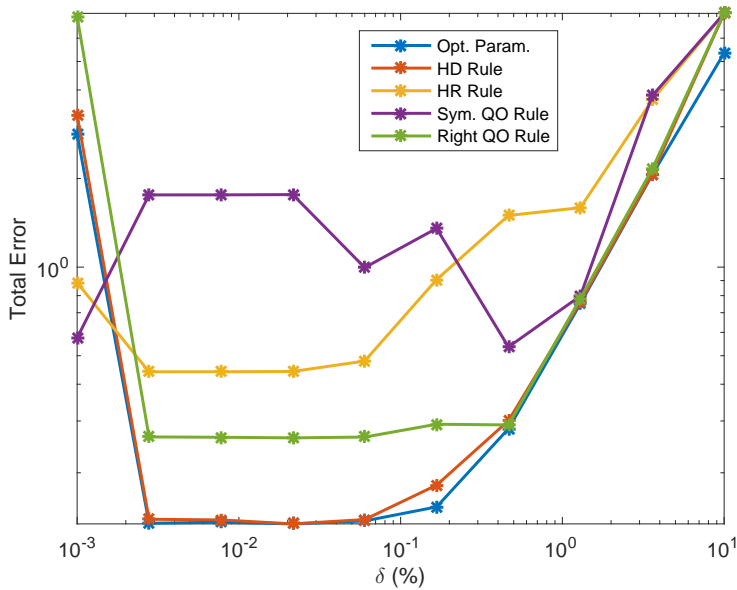}
	\caption{TV regularisation, tomography operator}	\label{TVTotErr}
\end{figure}

One may observe in Figure~\ref{TVTotErr} that the heuristic discrepancy rule appears to overall be the best performing one for all tested noise levels, although for larger noise levels, it is matched and/or trumped by the right quasi-optimality rule. The Hanke-Raus rule, on the other hand, does not appear to present itself as a preferable parameter choice rule in any of the tested noise levels, in this setting at least; that is, unless we compare it with the symmetric quasi-optimality rule which is the worst performing for mid-range noise levels particularly.

\subsection{Summary}
To summarise the numerical experiments presented above, we begin by remarking that the rules worked well, even in instances contrary to the expectations set by the theory. We observed that while none of the studied parameter choice rules were completely immune to mishaps, the heuristic discrepancy rule could perhaps be said to be the most robust overall. Indeed, in light of the above experiments, it is difficult to offer a particular recommendation.

\section{Conclusion}

In conclusion, we introduced four heuristic parameter choice rules for convex Tikhonov regularisation and presented a detailed analysis of the conditions we postulated for when the aforementioned rules are convergent regularisation methods. This involved the more general auto-regularisation conditions, as well as the reduction to the more specific Muckenhoupt-type conditions in case the forward operator is diagonal and the vector spaces are $\ell^q$. Indeed, the analysis for the heuristic discrepancy and Hanke-Raus rules was more in-depth and further investigation of the conditions presented for the symmetric quasi-optimality rule presents room for further research.

We furthermore provided a numerical study to demonstrate the performance of the rules examined in this paper and also illustrate our theoretical findings. Indeed, all the rules in question performed at least reasonably well overall and allow one to conclude that heuristic rules present themselves as viable and in fact, on occasion, attractive options for convex Tikhonov regularisation.

\bibliographystyle{siam}
\bibliography{lib}

\end{document}